\let\c@figure\c@table
\let\ftype@figure\ftype@table
\newtheorem{theorem}{Theorem} [section]
\newtheorem{proposition}[theorem]{Proposition}	
\newtheorem{lemma}[theorem]{Lemma}		
\newtheorem{example}{Example}[section]
\theoremstyle{definition}
\newtheorem{remark}{Remark}
\DeclareMathOperator{\ord}{ord}
\newcommand{\C}{\mathbb{C}}
\newcommand{\R}{\mathbb{R}}
\newcommand{\Z}{\mathbb{Z}}
\newcommand{\N}{\mathbb{N}}
\newcommand{\re}{\text{\upshape Re\,}}
\newcommand{\im}{\text{\upshape Im\,}}
\newcommand{\res}{\text{\upshape Res\,}}
\let\oldbibliography\thebibliography
\renewcommand{\thebibliography}[1]{\oldbibliography{#1}
\setlength{\itemsep}{-0.5pt}}
\def\Xint#1{\mathchoice
{\XXint\displaystyle\textstyle{#1}}%
{\XXint\textstyle\scriptstyle{#1}}%
{\XXint\scriptstyle\scriptscriptstyle{#1}}%
{\XXint\scriptscriptstyle\scriptscriptstyle{#1}}%
\!\int}
\def\XXint#1#2#3{{\setbox0=\hbox{$#1{#2#3}{\int}$}
\vcenter{\hbox{$#2#3$}}\kern-.5\wd0}}
\def\dashint{\;\Xint-}
\tikzset{->-/.style={decoration={
				markings,
				mark=at position #1 with {\arrow{latex}}},postaction={decorate}}}
	\tikzset{-<-/.style={decoration={
				markings,
				mark=at position #1 with {\arrowreversed{latex}}},postaction={decorate}}}
\tikzset{
	master/.style={
		execute at end picture={
			\coordinate (lower right) at (current bounding box.south east);
			\coordinate (upper left) at (current bounding box.north west);
		}
	},
	slave/.style={
		execute at end picture={
			\pgfresetboundingbox
			\path (upper left) rectangle (lower right);
		}
	}
}
\tikzset{cross/.style={cross out, draw, 
         minimum size=2*(#1-\pgflinewidth), 
         inner sep=0pt, outer sep=0pt}}
\numberwithin{equation}{section}
\def\bigO{{\cal O}}
\newcommand{\oset}[3][0ex]{%
  \mathrel{\mathop{#3}\limits^{
    \vbox to#1{\kern-2\ex@
    \hbox{$\scriptstyle#2$}\vss}}}}
\begin{document}
\title{Balayage of measures: behavior near a cusp}
\author{Christophe Charlier\footnote{Centre for Mathematical Sciences, Lund University, 22100 Lund, Sweden. E-mail: christophe.charlier@math.lu.se
} \, and Jonatan Lenells\footnote{Department of Mathematics, KTH Royal Institute of Technology, 10044 Stockholm, Sweden. E-mail: jlenells@kth.se}}

\maketitle

\begin{abstract}
Let $\mu$ be a positive measure supported on a domain $\Omega$. We consider the behavior of the balayage measure $\nu:=\mathrm{Bal}(\mu,\partial \Omega)$ near a point $z_{0}\in \partial \Omega$ at which $\Omega$ has an outward-pointing cusp. Assuming that the order and coefficient of tangency of the cusp are $d>0$ and $a>0$, respectively, and that $d\mu(z) \asymp |z-z_{0}|^{2b-2}d^{2}z$ as $z\to z_0$ for some $b > 0$, we obtain the leading order term of $\nu$ near $z_{0}$. This leading term is universal in the sense that it only depends on $d$, $a$, and $b$. We also treat the case when the domain has multiple corners and cusps at the same point.
Finally, we obtain an explicit expression for the balayage of the uniform measure on the tacnodal region between two osculating circles, and we give an application of this result to two-dimensional Coulomb gases. 
\end{abstract}

\noindent
{\small{\sc AMS Subject Classification (2020)}: 31A15, 31A20, 31A05}

\noindent
{\small{\sc Keywords}: Balayage measure, cusp, harmonic measure, boundary behavior, tacnode, Coulomb gas.}


%
%


\section{Introduction}
The balayage $\nu := \mathrm{Bal}(\mu,\partial \Omega)$ of a measure $\mu$ defined on a domain $\Omega$ is a measure supported on $\partial \Omega$ whose potential outside $\Omega$ coincides with that of $\mu$ (up to a constant if $\Omega$ is unbounded). The balayage measure is conveniently defined in terms of harmonic measure: If $\Omega$ is an open subset of the extended complex plane $\C^* = \C \cup \{\infty\}$ and $\mu$ is a non-negative measure on $\Omega$, then $\nu$ is the measure on $\partial \Omega$ defined whenever it exists by
\begin{align}\label{nuharmonicmeasure}
\nu(E) = \int_\Omega \omega(z, E, \Omega) d\mu(z) \qquad \text{for Borel subsets $E$ of $\partial \Omega$},
\end{align}
where $\omega(z, E, \Omega)$ is the harmonic measure of $E$ at $z$ in $\Omega$. 
Balayage measures are important in potential theory, where they can be used, for example, to solve the Dirichlet problem, to calculate capacities, to study obstacle problems, to characterize regular boundary points and polar sets, to analyze the boundary behavior of superharmonic functions, to compute hole probabilities for two-dimensional point processes, and to analyze the zeros of orthogonal polynomials, see e.g. \cite{AR2017, C2023, D2001, Land1972, G2004, BBMP2009, Z2023b, GR2018, GK2021, NW2023}. Balayage theory has been generalized in various ways: a theory of balayage on locally compact spaces is developed in \cite{Z2022, Z2023, Z2023b}; discrete balayage measures were applied to the boundary sandpile model in \cite{AS2019}; and partial and inverse balayage methods are valuable tools for the study of Hele-Shaw flows and free boundary problems \cite{GS1994, G1997, G2004}.

In this paper, we consider the behavior of the balayage $\nu = \mathrm{Bal}(\mu,\partial \Omega)$ near a point $z_0 \in \partial \Omega$ at which $\Omega$ has a cusp. Assuming that the density of the non-negative measure $\mu$ on $\Omega$ vanishes or grows at a certain power rate at $z_0$, we determine the asymptotic behavior of $\nu$ near $z_0$. More precisely, if $d\mu(z) \asymp |z-z_{0}|^{2b-2}d^{2}z$ as $z\to z_0$ for some $b > 0$, then we determine the leading order term of $\nu(\partial \Omega \cap B_r(z_0))$ as $r \to 0$, where $B_r(z_0)$ is the open disk of radius $r$ centered at $z_0$. It turns out that the leading order term of $\nu(\partial \Omega \cap B_r(z_0))$ as $r\to 0$ is {\it universal} in the sense that it only depends on $b$ and on the geometry of the cusp (more precisely, it only depends on $b$ and on the order and coefficient of tangency of the cusp; these notions are defined in Section \ref{section:def of a cusp} below). In particular, the vanishing rate of $\nu$ is independent of the global structure of $\Omega$ and of the restriction of $\mu$ to $\Omega \setminus B_r(z_0)$ for any fixed $r>0$. 

We also treat the case when $\Omega$ has an arbitrary finite number of corners and an arbitrary finite number of cusps at the same point $z_0 \in \partial \Omega$. If there are only cusps at $z_0$, then it turns out that the cusp with the lowest order of tangency determines the vanishing rate of $\nu(\partial \Omega \cap B_r(z_0))$. If there are also corners at $z_0$, then the vanishing rate is determined by the corner with the largest opening angle. Since the ``width'' of a cusp increases with decreasing order of tangency, this means that corners and cusps contribute to the balayage measure near $z_0$ according to their width. 

The present work is a continuation of \cite{CL Corner}, which deals with the case where $\Omega$ has a corner of opening angle $\pi \alpha$, $0 < \alpha \leq 2$, at $z_{0}$. The results of \cite{CL Corner} with $\alpha = 2$ cover the case of an inward-pointing cusp, i.e., inward-pointing cusps are included as special cases of corners. In this paper, we consider cusps pointing outwards in the sense that the interior angle of $\Omega$ vanishes at the cusp (as opposed to inward-pointing cusps for which the interior angle of $\Omega$ equals $2\pi$).
Precise definitions of the classes of corners and cusps that our results apply to are given in Section \ref{mainsec}.

The analysis of cusps presents a number of differences compared to that of corners. For example, for any fixed $z \in \Omega$, the harmonic measure $\omega(z, \partial \Omega \cap B_{r}(z_0), \Omega)$ has exponential decay as $r \to 0$ in the case of a cusp, whereas it has only polynomial decay in the case of a corner. In view of (\ref{nuharmonicmeasure}), one might expect that this would lead to a much faster decay rate for $\nu(\partial \Omega \cap B_r(z_0))$ in the case of a cusp. However, there are other effects that also need to be taken into account: (1) if $|z - z_0| \lesssim r$, then $\omega(z, \partial \Omega \cap B_{r}(z_0), \Omega)$ can be quite large (i.e. close to $1$) for a cusp in the small $r$ limit, and (2) the integral in (\ref{nuharmonicmeasure}) runs over a smaller area near $z_0$ if $z_0$ is a cusp.  Taken together, these effects imply that the vanishing rate of $\nu(\partial \Omega \cap B_r(z_0))$ at a cusp is only smaller by a factor $r^d$, where $d>0$ is the order of tangency of the cusp, compared with the case of a corner with a small opening angle, see Theorems \ref{mainth} and \ref{mainth2}.

\subsection{Balayage of the uniform measure on a tacnodal region}
There are few domains with cusps for which the balayage measure $\nu$ in (\ref{nuharmonicmeasure}) can be computed explicitly. However, if $\mu$ is the uniform (Lebesgue) measure and $\Omega$ is the tacnodal region between two osculating circles, we are able to determine $\nu$ exactly, see Theorem \ref{thm:EG tacnode nu}. Such a region has two cusps with order of tangency $1$ at the point $z_0$ where the circles intersect.
By extracting the small $r$ behavior of $\nu(\partial \Omega \cap B_r(z_0))$ from the obtained explicit formula, we verify in Remark \ref{remark:check the main thm} the obtained general asymptotic formulas for $\nu(\partial \Omega \cap B_r(z_0))$ in this special case.

\subsection{Two-dimensional Coulomb gases}
As another application of Theorem \ref{thm:EG tacnode nu}, we consider a Coulomb gas model with $n$ points in the presence of an external potential. We choose the external potential corresponding to the elliptic Ginibre point process. To leading order, the points of this process distribute themselves uniformly on the region $S$ enclosed by an ellipse as $n \to \infty$. If one imposes the hard wall constraint that no points are allowed to lie in a specified subset $\Omega$ of $S$, then the probability that no points lie in $\Omega$ can be expressed in the large $n$ limit in terms of the balayage measure $\nu=\mathrm{Bal}(\mu|_{\Omega},\partial \Omega)$ where $\mu$ is the uniform measure \cite{A2018, AR2017, C2023}. Using Theorem \ref{thm:EG tacnode nu}, we compute the large $n$ behavior of the probability that no points lie on a tacnodal region between two osculating circles.

\subsection{Organization of the paper}
Our four main theorems are stated in Section \ref{mainsec}. Theorem \ref{mainth} gives the leading term of the balayage near a single cusp, while Theorem \ref{mainth2} treats the case of multiple corners and cusps at the same point. 
Theorem \ref{thm:EG tacnode nu} gives an explicit expression for $\nu=\mathrm{Bal}(\mu|_{\Omega},\partial \Omega)$ where $\Omega$ is the tacnodal region between two osculating circles and $\mu$ is the uniform measure. 
Theorem \ref{thm:EG tacnode C} gives the probability that the elliptic Ginibre point process with $n$ points has no points in a tacnodal region in the large $n$ limit.
The proofs of the four main theorems are presented in Sections \ref{proofsec}--\ref{holesec}, respectively.

\section{Main results}\label{mainsec}
Before stating our main results, we need to introduce some notation. In particular, we need to specify what we mean by a cusp. 
We will first introduce the concept of an analytic cusp with small perturbation of angles as defined in \cite{K2010}. Our main results will apply to cusps which locally are images of analytic cusps with small perturbation of angles under power maps of the form $z \mapsto z_0 + (z-z_0)^p$, $p >0$. 

\subsection{Definition of a cusp}\label{section:def of a cusp}
Let $B_r(z) \subset \C$ denote the open disk of radius $r$ centered at $z$. Let $\C^* = \C \cup \{\infty\}$ denote the extended complex plane. If $\Omega$ is an open connected subset of $\C^*$ such that $\partial \Omega$ is a finite union of pairwise disjoint Jordan curves, then we say that $\Omega$ is a {\it finitely connected Jordan domain}.
A {\it regular analytic arc} in $\C^*$ is the image of a closed interval $I\subset \R$ under an analytic function $w:I\to \C^*$ such that $w'(t)\neq 0$ for all $t\in I$.

\begin{figure}
\begin{center}
\begin{tikzpicture}[scale = 1.5]

  \coordinate (c1) at (1, 0.21);
  \coordinate (c2) at (3.5, -1);
  \coordinate (c3) at (5.5, 1.1);
  \coordinate (c4) at (3, 1.7);
  \coordinate (c5) at (1.1, 1.8);
  \coordinate (c6) at (.8, 0.55);
  
  \coordinate (e1) at (70:1.1);
  \coordinate (e2) at (0, 2);
  \coordinate (e3) at (100:3);
  \coordinate (e4) at (150:3);
  \coordinate (e5) at (120:2);
  \coordinate (e6) at (80:1.1);

 \coordinate (a1) at (3.1, -0.6);
  \coordinate (a2) at (3.2, -0.3);
  \coordinate (a3) at (3.6, -0.1);
  \coordinate (a4) at (3.4, -0.3);

 \coordinate (b1) at (4, 0.6);
  \coordinate (b2) at (4.1, 0.9);
  \coordinate (b3) at (3.6, 0.8);
  \coordinate (b4) at (3.3, 0.4);

 \coordinate (d1) at (2, 1.4);
  \coordinate (d2) at (2.3, 0.5);
  \coordinate (d3) at (2.9, 0.8);

\coordinate (origin) at (70:0.4);

  
  \draw[black, thick, fill=cyan!8] plot [smooth] coordinates {(origin) (c1) (c2) (c3) (c4) (e3) (e4) (e5) (e2) (c5) (c6) (origin)};
  
\draw[dashed] (origin) circle (0.9);  


\draw[cyan, line width=0.5mm] (origin)+(-11:0.9) arc (-11:29:0.9);
\node[cyan] at ($(origin)+(9:1.2)$){\footnotesize $r \Theta(r)$};


  \draw[black, thick, fill=white] plot [smooth cycle] coordinates {(a1) (a2) (a3) (a4)};
  \draw[black, thick, fill=white] plot [smooth cycle] coordinates {(b1) (b2) (b3) (b4)};
  \draw[black, thick, fill=white] plot [smooth cycle] coordinates {(d1) (d2) (d3)};
  

\draw (origin) -- +(235:0.9);
\node at (-0.15,0.1){\footnotesize $r$};


\draw[fill] (origin) circle (1pt);
\node at ($(origin)+(-0.16,0)$) {\footnotesize $z_0$};

\coordinate (z0) at (2.5,0.15);


  \begin{scope}
    \clip (origin) circle (1.2);
  \draw[red, thick, line width=0.5mm] plot [smooth] coordinates {(origin) (c1) (c2) (c3) (c4) (e3) };
  \end{scope}
\node[red] at ($(origin)+(-22:0.51)$){\footnotesize $C_{+}$};
\node[red] at ($(origin)+(22:0.48)$){\footnotesize $C_{-}$};
\begin{scope}
    \clip (origin) circle (1.2);
  \draw[red, thick, line width=0.5mm] plot [smooth] coordinates {(origin) (c6) (c5) (e2) };
  \end{scope}

  \node at (4.8,0.9){$\Omega$};
 
\end{tikzpicture} 
 \caption{\label{harmonicfigure} Illustration of a finitely connected Jordan domain $\Omega$ with an analytic cusp at $z_{0}$.}
 \end{center}
\end{figure}
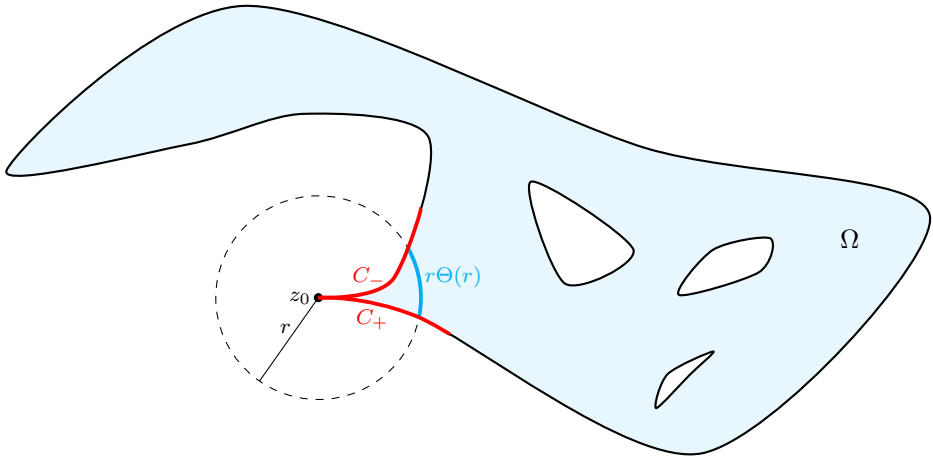

A finitely connected Jordan domain $\Omega$ has an {\it analytic cusp} at $z_0 \in \partial \Omega \cap \C$ if $\partial \Omega$ near $z_0$ consists of two regular analytic arcs $C_\pm \subset \partial \Omega$ such that the interior angle of $\Omega$ at $z_0$ vanishes, see Figure \ref{harmonicfigure}.
This means that there is an $\epsilon > 0$ and analytic functions $w_\pm:B_{2\epsilon}(0) \to \C$ such that $w_\pm(0) = z_0$, $w_+'(0) = w_-'(0) \neq 0$, and such that the boundary of $\Omega$ near $z_0$ is given by $C_+ \cup C_-$ with $C_\pm = w_\pm([0,\epsilon])$. 
We choose $C_\pm$ so that if they are oriented outwards from $z_0$, then $\Omega$ lies to the left of $C_+$ and to the right of $C_-$. After applying a rigid rotation if necessary, we may assume that the cusp is {\it tangent to $\R_{\geq 0}$} in the sense that $w_+'(0) = w_-'(0) > 0$. Furthermore, shrinking $\epsilon > 0$ if necessary, we may after a reparameterization assume that $w_\pm(r) = r e^{i \theta_\pm(r)}$ for $r \in [0, \epsilon]$, where $\theta_\pm(r)$ are real analytic functions of $r$ in a neighborhood of $r=0$ such that $\theta_\pm(0) = 0$, see \cite[p. 37]{K2010}.

If $h(r) = \sum_{n=0}^\infty h_n r^n$ is a non-vanishing power series, we define $\ord(h)$ as the smallest value of $n$ for which $h_n$ is nonzero; if $h(r) \equiv 0$, we set $\ord(0) = \infty$. 
The function $\Theta(r) := \theta_+(r) - \theta_-(r)$ admits a power series representation that converges near $r = 0$, 
$$\Theta(r) = \sum_{n=1}^\infty a_n r^n,$$ 
and the {\it order of tangency} of the cusp is the integer $d := \ord(\Theta(r)) \geq 1$. The first nonzero coefficient, $a_d$, is the {\it coefficient of tangency} of the cusp. Note that $\Theta(r) = a_d r^d + \bigO(r^{d+1})$ as $r \to 0$. Following \cite{K2010}, we say that the cusp has {\it small perturbation of angles} if $\min\{\ord(\theta_+(r)), \ord(\theta_-(r))\} = d$ and the coefficients $a_{d+1}, \dots, a_{2d}$ are all zero, i.e., $\Theta(r) = a_d r^d + \bigO(r^{2d+1})$ as $r \to 0$.

\begin{example}
Let $d \geq 1$ be an integer, $a > 0$, and $\rho \in (0, (\pi/(2a))^{1/d})$. Then $\Omega = \{z \in \C : 0 < |z| < \rho, 0 < \arg{z} < a |z|^d\}$ has an analytic cusp at $0$ with small perturbation of angles; its order of tangency is $d$ and its coefficient of tangency is $a$. 
Similarly, the domain in the right half of $\C \simeq \R^2$ between the $x$-axis and the graph of the function $y = a x^{d+1}$ has an analytic cusp at $0$ with small perturbation of angles with order of tangency $d$ and coefficient of tangency $a$. 
\end{example}

We say that a finitely connected Jordan domain $\Omega$ has a {\it cusp} at $z_0 \in \partial \Omega \cap \C$ if there is a radius $r > 0$ and a $p > 0$ such that the map $\varphi(z) = z_0 + (z-z_0)^p$ is bijective on $\Omega \cap B_r(z_0)$ and its image $\varphi(\Omega \cap B_r(z_0))$ has an analytic cusp at $z_0$ with small perturbation of angles. This definition of a cusp implies that $\Theta(r) = a r^d + \bigO(r^{2d+p})$ as $r \to 0$ for some $a>0$ and $d>0$, where $r\Theta(r)$ is the length of $\Omega \cap \partial B_r(z_0)$. We call $d$ and $a$ the {\it order of tangency} and the \textit{coefficient of tangency} of the cusp, respectively.

Whereas the order of tangency of an analytic cusp always is an integer, the order of tangency of a cusp can be any positive real number.

\begin{example}
Let $d > 0$, $a > 0$, and $\rho \in (0, (\pi/(2a))^{1/d})$. Then $\Omega = \{z \in \C : 0 < |z| < \rho, 0 < \arg{z} < a |z|^d\}$ has cusp at $0$ with order of tangency $d$ and coefficient of tangency $a$.
Similarly, the domain in the right half of $\C \simeq \R^2$ between the $x$-axis and the graph of the function $y = a x^{d+1}$ has a cusp at $0$ with order of tangency $d$ and coefficient of tangency $a$. 
\end{example}

\subsection{A single cusp}
We first state our main result in the case of a single cusp at $z_0$, see Figure \ref{harmonicfigure}. The result will then be generalized in Section \ref{multiplesubsec} to the case of arbitrary finite numbers of corners and/or cusps at $z_0$. 

We write 
\begin{align}\label{muc0o1}
d\mu(z) = (1 +o(1)) |z-z_{0}|^{2b-2}d^{2}z \qquad \text{as $z\to z_0 \in \partial \Omega$}
\end{align}
if for every $\epsilon > 0$, there exists $\rho_0 >0$ so that
\begin{align}\label{muc0o1eps}
\bigg|\mu(A) - \int_A |z-z_{0}|^{2b-2}d^{2}z\bigg| \leq \epsilon \int_A |z-z_{0}|^{2b-2}d^{2}z
\end{align}
holds for all measurable subsets $A$ of $\Omega \cap B_{\rho_0}(z_0)$.
Similarly, we write
$$d\mu(z) \asymp |z-z_{0}|^{2b-2}d^{2}z \qquad \text{as $z\to z_0 \in \partial \Omega$}$$
if there exist $\rho_0 >0$ and $c_1, c_2 > 0$ such that
\begin{align}
c_1 \int_A |z-z_{0}|^{2b-2}d^{2}z \leq \mu(A) \leq c_2 \int_A |z-z_{0}|^{2b -2}d^{2}z
\end{align}
for all measurable sets $A \subset \Omega \cap B_{\rho_0}(z_0)$, where
$d^2z = dxdy$ is the Lebesgue measure on $\C$. 
Given two positive functions $f$ and $g$, we also write 
$$f(r) \asymp g(r) \qquad \text{as $r \to 0$}$$
if there exists an $r_0 > 0$ and constants $c_1, c_2 > 0$ such that 
$$c_1 g(r) \leq f(r) \leq c_2 g(r) \qquad \text{for all $r \in (0, r_0)$}.$$ 

In the case of a single cusp, our main result is the following. We assume that $\omega(\cdot;E,\Omega)$ is $\mu$-measurable for any Borel subset $E$ of $\partial \Omega$, so that the integral in \eqref{nuharmonicmeasure} is well-defined.

\begin{theorem}[A single cusp]\label{mainth}
Let $\Omega$ be a finitely connected Jordan domain in $\C^* = \C \cup \{\infty\}$. Suppose $\Omega$ has a cusp with order of tangency $d > 0$ and coefficient of tangency $a>0$ at a point $z_0 \in \partial \Omega \cap \C$. Let $\mu$ be a non-negative measure of finite total mass on $\Omega$ such that $d\mu(z) = (1+o(1)) |z-z_{0}|^{2b-2}d^{2}z$ as $z\to z_0$ for some $b > 0$. Then, the balayage $\nu := \mathrm{Bal}(\mu,\partial \Omega)$ of $\mu$ onto $\partial \Omega$ obeys the following estimate as $r \to  0$:
\begin{align}\label{nuestimate}
\nu(\partial \Omega \cap B_r(z_0)) =(1+o(1))\frac{a}{2b+d} r^{2b+d}.
\end{align}
In particular, if $d\mu(z) \asymp |z-z_{0}|^{2b-2}d^{2}z$ as $z\to z_0$, then
\begin{align*}
\nu(\partial \Omega \cap B_r(z_0)) \asymp r^{2b+d} \qquad \mbox{as } r\to 0.
\end{align*}
\end{theorem}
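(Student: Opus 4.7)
Setting $\omega_r(z) := \omega(z, \partial\Omega \cap B_r(z_0), \Omega)$, one has $\nu(\partial\Omega \cap B_r(z_0)) = \int_\Omega \omega_r\, d\mu$, and the plan is to show that this integral is asymptotic to $\mu(\Omega \cap B_r(z_0))$. The $\mu$-mass is evaluated directly: polar coordinates around $z_0$, the length estimate $s\Theta(s) = a s^{d+1}(1 + o(1))$ for $\Omega \cap \partial B_s(z_0)$, and the density assumption on $\mu$ yield
$$\mu(\Omega \cap B_r(z_0)) = (1+o(1))\int_0^r s^{2b-1}\Theta(s)\, ds = (1+o(1))\frac{a}{2b+d}\, r^{2b+d},$$
which matches the right-hand side of \eqref{nuestimate}.

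The next step is to reduce to the case of an analytic cusp with small perturbation of angles, using the bijection $\varphi(z) = z_0 + (z-z_0)^p$ from the definition of a cusp. Conformal invariance of harmonic measure on the interior, combined with careful bookkeeping of the Jacobian $|\varphi'(z)|^2 = p^2 |z-z_0|^{2p-2}$ (which transforms $(a,b,d) \mapsto (pa, b/p, d/p)$ and multiplies the density by $p^{-2}$), shows that the leading constant $\tfrac{a}{2b+d}$ is preserved, so one may assume $\Omega$ itself has an analytic cusp with small perturbation of angles at $z_0$.

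Fix $\epsilon \in (0,1)$ and split $\int_\Omega \omega_r\, d\mu = I_{\text{in}} + I_{\text{mid}} + I_{\text{out}}$ corresponding to $\Omega \cap B_{(1-\epsilon)r}(z_0)$, $\Omega \cap (B_r(z_0) \setminus B_{(1-\epsilon)r}(z_0))$, and $\Omega \setminus B_r(z_0)$. The trivial bound $\omega_r \leq 1$ and the polar-coordinate calculation give
$$I_{\text{mid}} \leq \mu\bigl(\Omega \cap (B_r(z_0) \setminus B_{(1-\epsilon)r}(z_0))\bigr) = \bigl(1 - (1-\epsilon)^{2b+d}\bigr)(1+o(1))\frac{a}{2b+d}r^{2b+d},$$
which is $O(\epsilon)\cdot r^{2b+d}$ and vanishes relative to the target after letting $\epsilon \to 0$. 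Thus the theorem reduces to two harmonic measure estimates:
\begin{align*}
&\text{(A)} \quad \inf_{z \in \Omega \cap B_{(1-\epsilon)r}(z_0)}\omega_r(z) \to 1 \quad \text{as $r \to 0$,} \\
&\text{(B)} \quad I_{\text{out}} = o(r^{2b+d}).
\end{align*}

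Both (A) and (B) will rest on the exponential decay of harmonic measure through the narrow cusp, best revealed by mapping the thin region $\Omega \cap B_\delta(z_0)$ conformally onto a half-strip so that $z_0$ corresponds to $+\infty$ and a point at Euclidean distance $\rho$ from $z_0$ acquires image of real part $T_\rho \sim 1/(ad\rho^d)$. For (A), the rewrite $1-\omega_r(z) = \omega(z, \partial\Omega \setminus B_r(z_0), \Omega) \leq \omega(z, \partial B_r(z_0) \cap \Omega, \Omega \cap B_r(z_0))$ reduces matters to the harmonic measure of the exit arc, which in the strip model is a vertical segment at real part $T_r$; any $z$ with $|z-z_0| \leq (1-\epsilon)r$ has image of real part $T_{(1-\epsilon)r} \geq T_r + c\epsilon/r^d$, so standard half-strip estimates give $1 - \omega_r(z) \leq C e^{-c\epsilon/r^d}$. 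For (B), the analogous strip estimate should produce $\omega_r(z) \leq C\exp(-c(r^{-d} - |z-z_0|^{-d}))$ for $z \in \Omega \cap (B_\delta(z_0) \setminus B_r(z_0))$ with a fixed small $\delta$, and integration against $|z-z_0|^{2b-2}\, d^2z$ in polar coordinates should yield a contribution of order $r^{2b+2d} = o(r^{2b+d})$; on $\Omega \setminus B_\delta(z_0)$ one has $\omega_r \leq C e^{-c/r^d}$ uniformly by standard bounds as in \cite{K2010}. The main obstacle is to establish these sharp half-strip harmonic measure bounds with explicit constants depending on $a$ and $d$; once (A) and (B) are in hand, combining them with the $\mu$-mass computation and letting $\epsilon \to 0^+$ will conclude the proof.
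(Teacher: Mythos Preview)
Your approach is correct and takes a genuinely different, more elementary route for the lower bound than the paper does. The paper proves the upper bound exactly as you do in (B) (splitting at $|z|=r$ and $|z|=\rho_0$ and using the Ahlfors--Carleman extremal-length estimate of Lemma~\ref{extremaldistancelemma}), but for the lower bound it passes via Kaiser's theorem \cite{K2010} on the precise asymptotics of the conformal map $f:\mathbb{H}\to\varphi(U)$, and then computes the transformed integral directly in the half-plane. Your idea for (A)---bounding $1-\omega_r(z)\le \omega(z,\partial B_r(z_0)\cap\Omega,\Omega\cap B_r(z_0))$ by the maximum principle and then applying the \emph{same} Carleman estimate in the reverse direction---avoids Kaiser entirely and gives the sharp constant after the $\epsilon\to 0$ limit. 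A small bonus: your argument actually only uses $\Theta(s)=(1+o(1))a s^d$, so your reduction to the analytic cusp with small perturbation of angles via $\varphi$ is superfluous (whereas the paper genuinely needs it to invoke \cite{K2010}); you could drop that paragraph. One stylistic point: you phrase (A) and (B) in terms of an explicit ``half-strip model'' and the asymptotic $T_\rho\sim 1/(ad\rho^d)$, but you never actually need the conformal map---both estimates follow directly from the polar-coordinate Carleman bound $\omega\le \tfrac{8}{\pi}\exp(-\pi\int \frac{ds}{s\Theta(s)})$ applied to the annular shell between $|z-z_0|$ and $r$ (in either direction), exactly as in the paper's Lemma~\ref{extremaldistancelemma}. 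State it that way and the ``main obstacle'' you flag disappears.
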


The proof of Theorem \ref{mainth} is presented in Section \ref{proofsec}.

\begin{remark}
The asymptotic formula \eqref{nuestimate} can be rewritten as follows: for every $\epsilon >0$, there exists an $r_{0} >0$ such that, for all $r\in (0,r_{0}]$, 
\begin{align}\label{nuestimate eps}
(1-\epsilon)\frac{a}{2b+d} r^{2b+d}\leq \nu(\partial \Omega \cap B_r(z_0)) \leq (1+\epsilon)\frac{a}{2b+d} r^{2b+d}.
\end{align}
We will prove the upper and lower bounds in \eqref{nuestimate eps} separately. The assumptions we have imposed on the cusp in Theorem \ref{mainth} (i.e., that $\Omega$ locally is the image under a power map of an analytic cusp with small perturbation of angles) are needed to prove the lower bound $\frac{a(1-\epsilon)}{2b+d}  r^{2b+d} \leq \nu(\partial \Omega \cap B_r(z_0))$ in (\ref{nuestimate eps}). The upper bound in (\ref{nuestimate eps}), $\nu(\partial \Omega \cap B_r(z_0)) \leq \frac{a(1+\epsilon)}{2b+d} r^{2b+d}$, holds even without these assumptions as long as $\Theta(r) \leq (1+o(1)) a r^d$ as $r\to 0$; see the proof in Section \ref{proofsec}.
\end{remark}

\begin{remark}(Consistency check.)
Suppose that $\Omega_{1}$ is a Jordan domain with a corner of opening $\pi \alpha$ at $z_{0}$, and that $\Omega_{2}$ is a Jordan domain with a cusp of order $d$ and coefficient of tangency $a$ at $z_{0}$. In particular, as $r\to 0$,
\begin{align*}
\Theta_{1}(r) := \frac{1}{r} |\Omega_{1} \cap \partial B_{r}(z_{0})| = \pi \alpha(1+o(1)), \qquad \Theta_{2}(r) := \frac{1}{r}  |\Omega_{2} \cap \partial B_{r}(z_{0})| = ar^{d}(1+o(1)).
\end{align*}
Let $\mu$ be a non-negative measure of finite total mass on $\Omega_{1}\cup \Omega_{2}$ such that $d\mu(z) = (1+o(1)) |z-z_{0}|^{2b-2}d^{2}z$ as $z\to z_0$ for some $b > 0$. Note that the leading term in \eqref{nuestimate} is universal in that it only depends on $d$, $a$, and $b$. Hence, if $\alpha$ and $d$ are small (but fixed) and $a=\pi \alpha$, one expects 
\begin{align}\label{lol22}
\nu_{1}(\partial \Omega_{1} \cap B_r(z_0)) \approx \nu_{2}(\partial \Omega_{2} \cap B_r(z_0)) \qquad \mbox{for small } r>0,
\end{align}
where $\nu_{j} := \mathrm{Bal}(\mu|_{\Omega_{j}},\partial \Omega_{j})$, $j=1,2$. We verify here that this is indeed the case. By \cite[Theorem 2.1]{CL Corner}, for any $\epsilon >0$,  
\begin{align*}
(1-\epsilon) \frac{\tan(\pi \alpha b)}{2b^2} r^{2b} & \leq \nu_{1}(\partial \Omega_{1} \cap B_r(z_0)) \leq (1+\epsilon) \frac{\pi \alpha}{2b} \bigg(1  + \frac{16 b}{\pi (\frac{1}{\alpha} - 2b)}\bigg)  r^{2b}
\end{align*}
holds for all sufficiently small $r>0$. As $\alpha \to 0$, 
\begin{align*}
\frac{\tan(\pi \alpha b)}{2b^2} = \frac{\pi \alpha}{2b} + \bigO(\alpha^{3}), \qquad \frac{\pi \alpha}{2b} \bigg(1  + \frac{16 b}{\pi (\frac{1}{\alpha} - 2b)}\bigg) = \frac{\pi \alpha}{2b} + \bigO(\alpha^{2}).
\end{align*}
Hence, if $\alpha \approx 0$, then 
\begin{align}\label{lol20}
\nu_{1}(\partial \Omega_{1} \cap B_r(z_0)) \approx \frac{\pi\alpha}{2b}r^{2b} \qquad \mbox{for small } r>0.
\end{align}
On the other hand, if $d \approx 0$ and $a=\pi \alpha$, Theorem \ref{mainth} implies that
\begin{align}\label{lol21}
\nu_{2}(\partial \Omega_{2} \cap B_r(z_0)) \approx \frac{\pi\alpha}{2b}r^{2b} \qquad \mbox{for small } r>0.
\end{align}
Taken together, \eqref{lol20} and \eqref{lol21} confirm the expectation \eqref{lol22}. 
\end{remark}

\subsection{Multiple corners and cusps}\label{multiplesubsec}
Our second theorem treats the case when $\Omega$ has $m_1$ corners and  $m_2$ cusps at $z_0$, see Figure \ref{cornerfigure}.

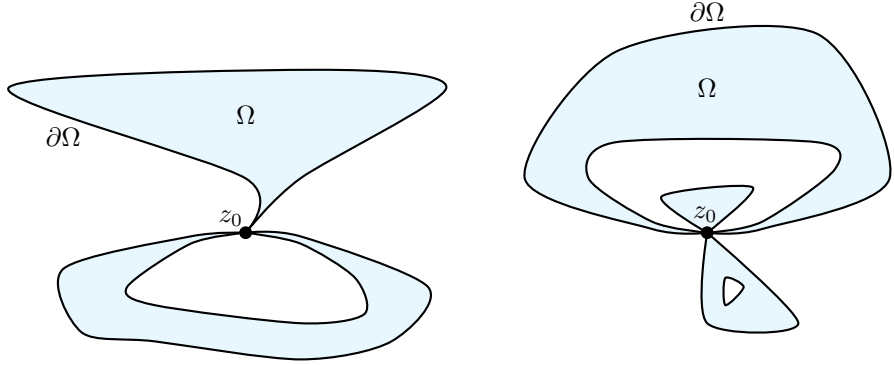
\begin{figure}
\begin{center}
\begin{tikzpicture}[master,scale = 2.4]
  \coordinate (origin) at (0, 0);

  \coordinate (z0) at (0,0);
  \coordinate (c2) at (0.3, 0.3);
  \coordinate (c3) at (1.1, 0.8);
  \coordinate (c4) at (0.4, 0.9);
  \coordinate (c5) at (-1.3, 0.8);
  \coordinate (c6) at (0, 0.3);
  
  \coordinate (d2) at (0.3, -0.02);
  \coordinate (d3) at (1, -0.3);
  \coordinate (d3bis) at (0.8, -0.6);
  \coordinate (d4) at (0.3, -0.7);
  \coordinate (d5) at (-0.5, -0.6);
  \coordinate (d5bis) at (-0.9, -0.55);
  \coordinate (d6) at (-1.0, -0.2);
  \coordinate (d7) at (-0.3, -0.02);
  
  \coordinate (e2) at (0.3, -0.06);
  \coordinate (e3) at (0.6, -0.25);
  \coordinate (e3bis) at (0.65, -0.45);
  \coordinate (e4) at (0.3, -0.5);
  \coordinate (e5) at (-0.5, -0.4);
  \coordinate (e6) at (-0.65, -0.3);
  \coordinate (e7) at (-0.3, -0.06);

  \draw[black, thick, fill=cyan!8] plot [smooth] coordinates {(z0) (c2) (c3) (c4) (c5) (c6) (z0)};
  
  \draw[black, thick, fill=cyan!8] plot [smooth] coordinates {(z0) (d2) (d3) (d3bis) (d4) (d5) (d5bis) (d6) (d7) (z0)};
  
  \draw[black, thick, fill=white!] plot [smooth] coordinates {(z0) (e2) (e3) (e3bis) (e4) (e5) (e6) (e7) (z0)};





%
\node at (0,0.65){$\Omega$};
\node at (-1,0.53){$\partial\Omega$};

\fill (origin) circle (1pt);
\node at (-0.08,0.08){$z_0$};

\end{tikzpicture} 
\begin{tikzpicture}[slave, scale = 2.4]
  \coordinate (origin) at (0, 0);
  
  \coordinate (d2) at (0.3, 0.02);
  \coordinate (d3) at (1, 0.3);
  \coordinate (d3bis) at (0.8, 0.6);
  \coordinate (d4) at (0.6, 1.1);
  \coordinate (d5) at (-0.5, 1);
  \coordinate (d6) at (-1, 0.3);
  \coordinate (d7) at (-0.3, 0.02);
  
  \coordinate (e2) at (0.3, 0.06);
  \coordinate (e3) at (0.7, 0.35);
  \coordinate (e4) at (0.6, 0.5);
  \coordinate (e5) at (-0.5, 0.5);
  \coordinate (e6) at (-0.65, 0.3);
  \coordinate (e7) at (-0.3, 0.06);
  
  \coordinate (f1) at (0.5, -0.5);
  \coordinate (f2) at (0, -0.5);
  
  \coordinate (g1) at (0.25, 0.25);
  \coordinate (g2) at (-0.250, 0.2);
  
  \coordinate (h1) at (0.1, -0.25);
  \coordinate (h2) at (0.2, -0.3);
  \coordinate (h3) at (0.1, -0.4);

  
  \draw[black, thick, fill=cyan!8] plot [smooth] coordinates {(z0) (d2) (d3) (d4) (d5) (d6) (d7) (z0)};
  
  \draw[black, thick, fill=white!] plot [smooth] coordinates {(z0) (e2) (e3) (e4) (e5) (e6) (e7) (z0)};
  
  \draw[black, thick, fill=cyan!8] plot [smooth] coordinates {(z0) (f1) (f2) (z0)};
  
  \draw[black, thick, fill=cyan!8] plot [smooth] coordinates {(z0) (g1) (g2) (z0)};
  
  \draw[black, thick, fill=white!] plot [smooth cycle] coordinates {(h1) (h2) (h3)};





%
\node at (0,0.8){$\Omega$};
\node at (0,1.22){$\partial\Omega$};

\fill (origin) circle (1pt);
\node at (-0.01,0.1){$z_0$};

\end{tikzpicture} 
 \caption{\label{cornerfigure} Left: an open set $\Omega$ with $m_{1}=0$ corners and $m_{2}=3$ cusps at $z_{0}$. Right: an open set $\Omega$ with $m_{1}=2$ corners and $m_{2}=2$ cusps at $z_{0}$.
}
  \end{center}
\end{figure}

We define a corner as follows. The image of a closed interval $I \subset \R$ under an injective continuous map $I \to \C^*$ is a {\it Jordan arc in $\C^*$}. A Jordan arc $C \subset \C$ is of class $C^{1,\gamma}$, $0 < \gamma \leq 1$, if it has a parametrization $C: w(t)$, $0 \leq t \leq 1$, such that $w'(t)$ exists, is nonzero, and is H\"older continuous with exponent $\gamma$ on $[0,1]$.
A finitely connected Jordan domain $\Omega$ in $\C^*$ has a {\it corner} of opening $\pi \alpha$, $0 < \alpha \leq 2$, at $z_0 \in \partial \Omega \cap \C$ if there are Jordan arcs $C_\pm \subset \partial \Omega$ ending at $z_0$ and a $\phi \in \R$ such that
\begin{align}\label{arg def angle}
\arg(z - z_0) \to \begin{cases} 
\phi & \text{as $C_+ \ni z \to z_0$}, \\
\phi + \pi \alpha & \text{as $C_- \ni z \to z_0$}, 
\end{cases}
\end{align}
and such that $\Omega$ lies to the left of $C_+$ and to the right of $C_-$ if $C_\pm$ are oriented outwards from $z_0$.
 In \eqref{arg def angle}, the branch is such that $\arg(\cdot-z_{0})$ is continuous in $(\bar{\Omega} \cap B_{\rho_{0}}(z_{0}))\setminus \{z_{0}\}$ for some small enough $\rho_{0}>0$. 
The corner is {\it H\"older-$C^1$} if the Jordan arcs $C_\pm$ can be chosen to be of class $C^{1,\gamma}$ for some $\gamma>0$.

\begin{theorem}[Multiple corners and/or cusps at the same point]\label{mainth2}
Let $\Omega$ be an open subset of $\C^*$, let $z_0 \in \partial \Omega \cap \C$, and let $m_1,m_2 \geq 0$ be integers such that $m := m_1 + m_2 \geq 1$. Suppose there is a radius $\rho_0 > 0$ such that

\begin{enumerate}[$(i)$]
\item the open set $\Omega \cap B_{\rho_0}(z_0)$ has $m$ connected components $\{U_j\}_1^m$ satisfying $\bar{U}_j \cap \bar{U}_k = \{z_0\}$ whenever $j \neq k$, and such that for each $j = 1, \dots, m_1$, $U_j$ has a H\"older-$C^1$ corner at $z_0$ of opening $\pi \alpha_j \in (0,2\pi]$, while for each $j = m_1 + 1, \dots, m$,  $U_j$ has a cusp at $z_0$ with order of tangency $d_j> 0$ and coefficient of tangency $a_{j}>0$.

\item $\Omega \setminus \overline{B_{\rho_0}(z_0)}$ is a disjoint union of finitely connected Jordan domains.
\end{enumerate}
Let
$$\alpha := \displaystyle{\max_{1\leq j \leq m_1}} \alpha_j \quad \text{and} \quad 
d := \displaystyle{\min_{m_1 +1\leq j \leq m}} d_j.$$
Let $\mu$ be a non-negative measure of finite total mass on $\Omega$ such that $d\mu(z) = (1+o(1))  |z-z_{0}|^{2b-2}d^{2}z$ as $z\to z_0$ for some $b > 0$. 
For every $\epsilon > 0$, the balayage $\nu := \mathrm{Bal}(\mu,\partial \Omega)$ of $\mu$ onto $\partial \Omega$ obeys the following inequalities for all sufficiently small $r > 0$:
\begin{align}
(1-\epsilon)\frac{\sum_{j \in \mathcal{D}}a_{j}}{2b+d} r^{2b+d} & \leq \nu(\partial \Omega \cap B_r(z_0)) \leq (1+\epsilon)\frac{\sum_{j \in \mathcal{D}}a_{j}}{2b+d} r^{2b+d} & & \hspace{-1cm}  \mbox{if } m_{1}=0, \nonumber \\
\nonumber
(1-\epsilon) \sum_{j=1}^{m_{1}} \frac{\tan(\pi \alpha_j b)}{2b^2} r^{2b} & \leq \nu(\partial \Omega \cap B_r(z_0)) \leq (1+\epsilon) \sum_{j=1}^{m_{1}} \frac{\pi \alpha_j}{2b} \bigg(1  + \frac{16 b}{\pi (\frac{1}{\alpha_j} - 2b)}\bigg)  r^{2b} \\
&  && \hspace{-2.2cm} \text{if $m_{1}\geq 1$ and $2b < \frac{1}{\alpha}$}, \nonumber 
	\\ \nonumber 
m_{\alpha} (1-\epsilon) \frac{2}{\pi b} r^{2b} \log(\tfrac{1}{r})
& \leq
 \nu(\partial \Omega \cap B_r(z_0))
\leq m_{\alpha} (1+\epsilon) \frac{4}{b} r^{2b} \log(\tfrac{1}{r}) & & \hspace{-2.2cm} \text{if } m_{1}\geq 1 \mbox{ and } 2b= \frac{1}{\alpha}, 	\\ \label{numultiplebounds}
c r^{\frac{1}{\alpha}} & \leq
 \nu(\partial \Omega \cap B_r(z_0))
\leq C r^{\frac{1}{\alpha}} & & \hspace{-2.2cm} \text{if } m_{1}\geq 1 \mbox{ and } 2b > \frac{1}{\alpha},
\end{align}
for some constants $c,C>0$, and where $\mathcal{D} = \{j \in \{m_{1}+1,\ldots,m\}: d_{j}=d\}$ and $m_{\alpha} = \#\{j \in \{1,\ldots, m_{1}\}:\alpha_{j}=\alpha\}$.

In particular, if $d\mu(z) \asymp |z-z_{0}|^{2b-2}d^{2}z$ as $z\to z_0$, then
\begin{align}\label{nuestimate2}
\nu(\partial \Omega \cap B_r(z_0)) \asymp \begin{cases}
r^{2b+d}  & \text{if $m_1=0$}, 
	\\
r^{2b} & \text{if $m_1\geq 1$ and $2b < \frac{1}{\alpha}$},
 	\\
r^{2b}\log \frac{1}{r} &  \text{if $m_1\geq 1$ and $2b= \frac{1}{\alpha}$}, 
	\\
r^{\frac{1}{\alpha}} &  \text{if $m_1\geq 1$ and $2b > \frac{1}{\alpha}$}.
\end{cases}
\end{align}
\end{theorem}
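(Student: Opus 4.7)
The plan is to additively reduce the multi-component estimate to the single-cusp result of Theorem~\ref{mainth} and the single-corner result \cite[Theorem~2.1]{CL Corner} by localizing the balayage integral to each connected component $U_j$ of $\Omega \cap B_{\rho_0}(z_0)$. Starting from \eqref{nuharmonicmeasure}, the first step is the decomposition
\begin{equation*}
\nu(\partial\Omega \cap B_r(z_0)) = \sum_{j=1}^{m} \int_{U_j} \omega(z,\partial\Omega \cap B_r(z_0),\Omega)\,d\mu(z) + \int_{\Omega \setminus \overline{B_{\rho_0}(z_0)}}\omega(z,\partial\Omega \cap B_r(z_0),\Omega)\,d\mu(z).
\end{equation*}
I would then show that on $\{|z-z_0|\geq \rho_0/2\}$ the harmonic measure $\omega(z,\partial\Omega \cap B_r(z_0),\Omega)$ decays uniformly at least at rate $r^\gamma$, with $\gamma$ super-polynomial in the all-cusp case (exploiting the narrow throats of the cusps at $z_0$) and $\gamma=1/\alpha$ in the case $m_1\geq 1$, by a Beurling projection / Phragm\'en--Lindel\"of argument applied to the nearby corner tips. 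This controls the far-field integral and, through the sandwich below, the error terms for each $U_j$.

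For each component $U_j$, a comparison of boundary values on $\partial U_j$ together with the maximum principle yields the sandwich
\begin{equation*}
\omega(z,\partial U_j\cap B_r(z_0),U_j) \leq \omega(z,\partial\Omega\cap B_r(z_0),\Omega) \leq \omega(z,\partial U_j\cap B_r(z_0),U_j)+M_j(r)\,\omega(z,\partial U_j\cap\Omega,U_j),
\end{equation*}
valid for $z\in U_j$, where $M_j(r):=\sup_{\zeta\in\partial U_j\cap\Omega}\omega(\zeta,\partial\Omega\cap B_r(z_0),\Omega)$. The verification rests on the fact that for $r<\rho_0$ the boundary $\partial U_j$ decomposes as $(\partial U_j\cap B_r(z_0))\sqcup(\partial U_j\cap\partial\Omega\setminus B_r(z_0))\sqcup(\partial U_j\cap\Omega)$, the last piece contained in $\partial B_{\rho_0}(z_0)\cap\Omega$; on these three pieces, the middle quantity equals $1$, $0$, and is at most $M_j(r)$ respectively, while the outer two harmonic majorants match. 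Integrating against $\mu|_{U_j}$ and using $\mu(U_j)<\infty$,
\begin{equation*}
\nu_j(\partial U_j\cap B_r(z_0)) \leq \int_{U_j}\omega(z,\partial\Omega\cap B_r(z_0),\Omega)\,d\mu(z) \leq \nu_j(\partial U_j\cap B_r(z_0)) + M_j(r)\mu(U_j),
\end{equation*}
with $\nu_j:=\mathrm{Bal}(\mu|_{U_j},\partial U_j)$. The uniform far-field estimate from the previous step forces $M_j(r)\mu(U_j)$ to be absorbed below the main term.

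It then remains to sum the single-feature asymptotics. Applying Theorem~\ref{mainth} (resp.\ \cite[Theorem~2.1]{CL Corner}) to $\nu_j$ for each cuspidal (resp.\ corner) $U_j$ and summing: if $m_1=0$, only cusps with $j\in\mathcal{D}$ contribute at leading order $r^{2b+d}$ with constant $\frac{\sum_{j\in\mathcal{D}}a_j}{2b+d}$, while cusps with $d_j>d$ contribute $r^{2b+d_j}=o(r^{2b+d})$; if $m_1\geq 1$, every cusp contribution $r^{2b+d_j}$ is $o(r^{2b})$ and hence subdominant to every corner contribution, so one simply sums the three single-corner cases from \cite{CL Corner}, observing that in the subcases $2b=1/\alpha$ and $2b>1/\alpha$ only the $m_\alpha$ corners with $\alpha_j=\alpha$ saturate the leading rates $r^{2b}\log(1/r)$ and $r^{1/\alpha}$ respectively. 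The main technical difficulty is the uniform far-field estimate: in the strict cases ($m_1=0$, or $m_1\geq 1$ with $2b<1/\alpha$) one needs $r^\gamma$ decay with $\gamma$ strictly bigger than the main rate, which is automatic since cusps give super-polynomial decay and $1/\alpha>2b$; in the borderline case $2b=1/\alpha$ the extra logarithm in the main term absorbs the matching Beurling rate $r^{1/\alpha}$; and the supercritical case $2b>1/\alpha$ allows non-sharp constants $c,C$ in \eqref{numultiplebounds} precisely to accommodate the Beurling bound having the same order $r^{1/\alpha}$ as the main term.
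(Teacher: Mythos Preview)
Your proposal is correct and follows essentially the same route as the paper. The paper packages your maximum-principle sandwich as a separate ``Decoupling and localization'' lemma (Lemma~\ref{decouplinglemma}), proving exactly that $\nu(\partial\Omega\cap B_r)-\text{(error)}\leq\sum_j\nu_j(\partial U_j\cap B_r)\leq\nu(\partial\Omega\cap B_r)$ with error $Ce^{-cr^{-d}}$ when $m_1=0$ and $Cr^{1/\alpha}$ when $m_1\geq 1$; the far-field control you obtain via Beurling/Phragm\'en--Lindel\"of is supplied in the paper by the extremal-length estimate of Lemma~\ref{omegaupperboundlemma} (which yields the same rates), and the summation of Theorem~\ref{mainth} and \cite[Theorem~2.1]{CL Corner} over the components is identical.
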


The proof of Theorem \ref{mainth2} is presented in Section \ref{proofsec2}.

\subsection{Balayage of the uniform measure on a tacnodal region}


\begin{figure}
\begin{center}
\hspace{-0.3cm}\begin{tikzpicture}[master]
\node at (0,0) {\includegraphics[width=4.8cm]{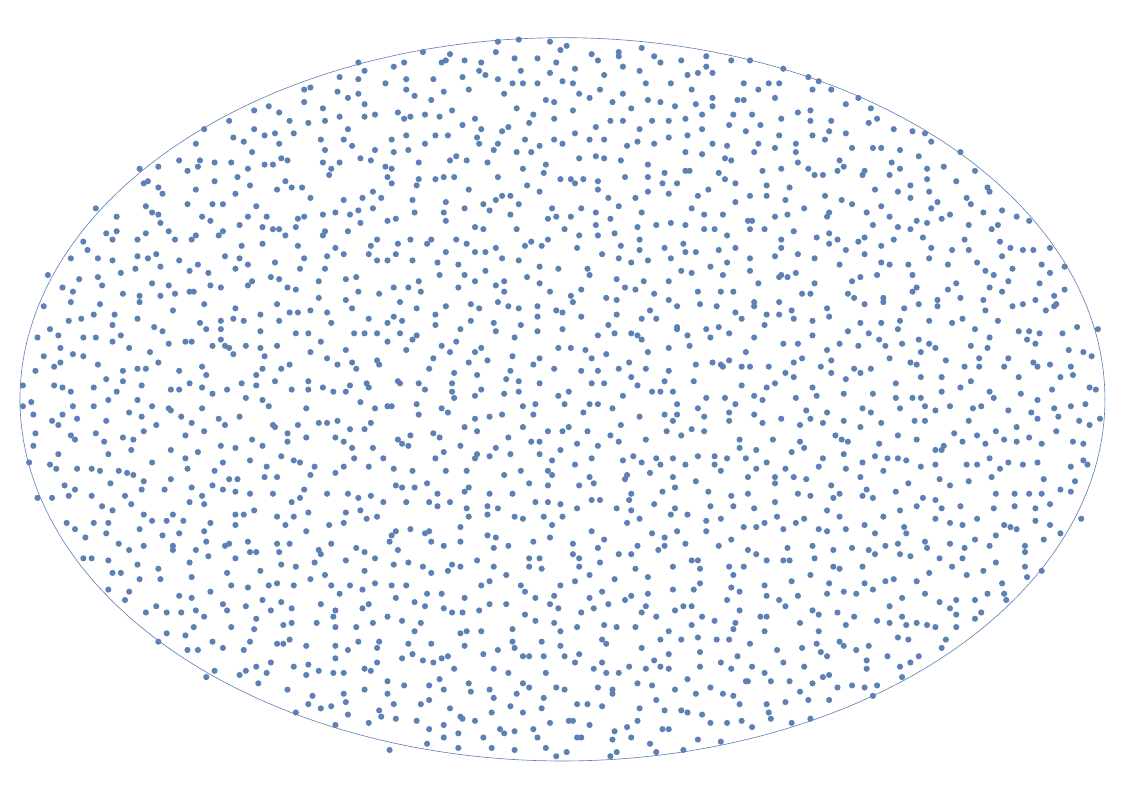}};
\end{tikzpicture}  \hspace{-0.4cm}
\begin{tikzpicture}[slave]
\node at (0,0) {\includegraphics[width=4.8cm]{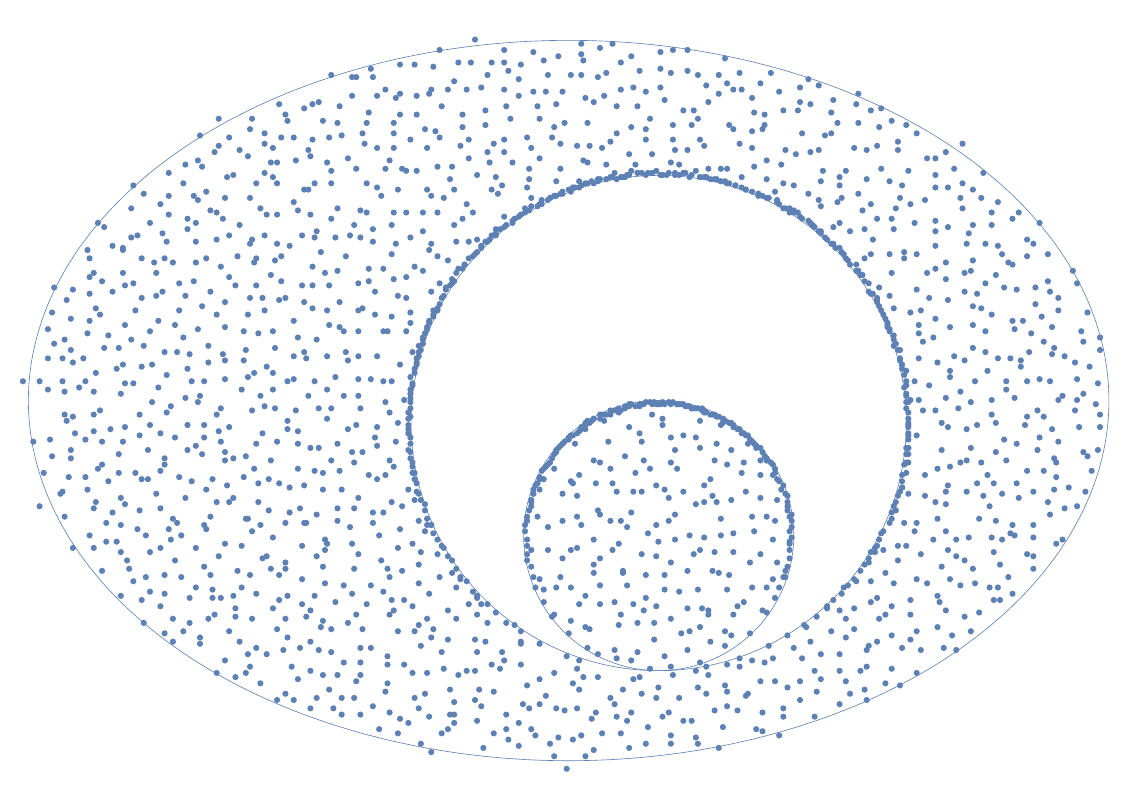}};
\end{tikzpicture} \hspace{-0.22cm}
\begin{tikzpicture}[slave]
\node at (0,0) {\includegraphics[width=4.8cm]{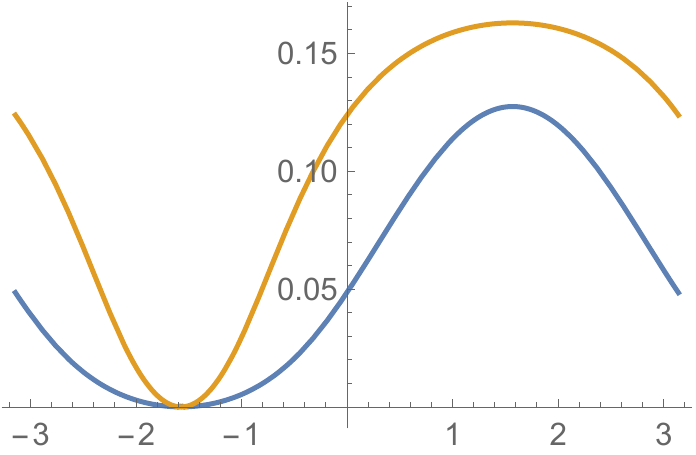}};

\end{tikzpicture}
\end{center}
\caption{\label{fig:density complement ellipse Ginibre} Left: the elliptic Ginibre point process with $\tau=0.2$ and $n=2000$. Middle: the same point process, but conditioned on $\# \{z_{j}\in z_0 + e^{i\theta_0}\Omega_{a,c}\} = 0$, where  $a=0.3$, $c = 0.55$, $\theta_{0}=0$ and $z_{0} = 0.2-0.6 i$. Right: the normalized density $\theta \mapsto \frac{d\nu(z_{0}+ai+ae^{i\theta})/d\theta}{\nu(\partial (z_0 + e^{i\theta_0}\Omega_{a,c}))}$ (blue) and $\theta \mapsto \frac{d\nu(z_{0}+ci+ce^{i\theta})/d\theta}{\nu(\partial (z_0 + e^{i\theta_0}\Omega_{a,c}))}$ (orange), i.e., the blue and orange curves show the normalized density of the balayage measure on the inner and outer circles of $\partial \Omega_{a,c}$, respectively. In accordance with Theorems \ref{mainth2} and \ref{thm:EG tacnode nu}, the density vanishes quadratically at the point where the circles intersect (corresponding to $\theta = -\pi/2$).}
\end{figure}

Let $\Omega_{a,c}$ be the the tacnodal region between two osculating circles of radii $0<a<c$ defined by
\begin{align}\label{Omegaacdef}
\Omega_{a,c} := \{z:|z-ci|<c\} \setminus \{z:|z-ai|\leq a\}.
\end{align}
Our next result gives an explicit expression for the balayage of the uniform measure on $\Omega_{a,c}$, see Figure \ref{fig:density complement ellipse Ginibre} (right).
The proof is presented in Section \ref{uniformsec}.

\begin{theorem}[Balayage of the uniform measure on a tacnodal region]\label{thm:EG tacnode nu}
Let $0<a<c$ and let $\nu=\mathrm{Bal}(\mu|_{\Omega_{a,c}},\partial \Omega_{a,c})$, where $d\mu(z) = \frac{d^{2}z}{\pi}$. 
For $k\in \Z$ and $\theta \in (-\frac{\pi}{2},\frac{3\pi}{2})\setminus\{\frac{\pi}{2}\}$, define
\begin{align}
& f_{k}(\theta) = \frac{e^{i\theta}}{(ic-(i+e^{i\theta})(c-a)k)^{2}}, & & g_{k}(\theta) = \frac{-e^{i\theta}}{(ia+(i+e^{i\theta})(c-a)k)^{2}}, \nonumber \\
& p_{k}(\theta) = \frac{a(i+e^{i\theta})}{1+ki(i+e^{i\theta})(1-\frac{a}{c})}, & & q_{k}(\theta) = \frac{c(i+e^{i\theta})}{1+ki(i+e^{i\theta})(1-\frac{c}{a})}. \label{def of pk and qk}
\end{align}
Then the balayage $\nu$ is given for $z=ai + ae^{i\theta}$, $\theta \in (-\frac{\pi}{2},\frac{3\pi}{2})\setminus\{\frac{\pi}{2}\}$ by 
\begin{align}
& \frac{d\nu(ai+ae^{i\theta})}{d\theta} = \frac{c}{(c-a)(1+\sin \theta)} \bigg\{ \frac{\frac{c^{2}}{2}\sin \big(\frac{c-2a}{c-a}\pi\big)}{\cosh\big( \frac{c\pi \cos \theta}{(c-a)(1+\sin \theta)} \big) -\cos\big(\frac{c-2a}{c-a}\pi\big)} \nonumber \\
&- \frac{\frac{a^{2}}{2}\sin\big(\frac{c\pi}{c-a}\big)}{\cosh\big( \frac{c\pi \cos \theta}{(c-a)(1+\sin \theta)} \big) -\cos\big(\frac{c\pi}{c-a}\big)}
 + \frac{a^{2}c\pi}{c-a} \frac{\cosh\big( \frac{c\pi \cos \theta}{(c-a)(1+\sin \theta)} \big) \cos\big(\frac{c\pi}{c-a}\big)-1}{\big( \cosh\big( \frac{c\pi \cos \theta}{(c-a)(1+\sin \theta)} \big) -\cos\big(\frac{c\pi}{c-a}\big) \big)^{2}}  \bigg\} \nonumber \\
& + \re \bigg[ \frac{ac^{3}}{\pi} \sum_{k=-\infty}^{0} f_{k}(\theta) \frac{ip_{k}(\theta)}{p_{k}(\theta)-ci} + \frac{a^{2}c^{2}}{\pi} \sum_{k=1}^{+\infty} f_{k}(\theta) \frac{ip_{k}(\theta)}{p_{k}(\theta)-ai} \bigg] \label{nu on Ca}
\end{align}
and for $z=ci + ce^{i\theta}$, $\theta \in (-\frac{\pi}{2},\frac{3\pi}{2})\setminus\{\frac{\pi}{2}\}$ by
\begin{align}
& \frac{d\nu(ci+ce^{i\theta})}{d\theta} = \frac{a}{(c-a)(1+\sin \theta)} \bigg\{ \frac{\frac{c^{2}}{2}\sin \big(\frac{a\pi}{c-a}\big)}{\cosh\big( \frac{a\pi \cos \theta}{(c-a)(1+\sin \theta)} \big) -\cos\big(\frac{a\pi}{c-a}\big)} 
	\nonumber \\
& + \frac{\frac{a^{2}}{2}\sin\big(\frac{a\pi}{c-a}\big)}{\cosh\big( \frac{a\pi \cos \theta}{(c-a)(1+\sin \theta)} \big) -\cos\big(\frac{a\pi}{c-a}\big)} 
- \frac{a^{2}c\pi}{c-a} \frac{\cosh\big( \frac{a\pi \cos \theta}{(c-a)(1+\sin \theta)} \big) \cos\big(\frac{a\pi}{c-a}\big)-1}{\big( \cosh\big( \frac{a\pi \cos \theta}{(c-a)(1+\sin \theta)} \big) -\cos\big(\frac{a\pi}{c-a}\big) \big)^{2}} \bigg\} 
	\nonumber \\
& + \re \bigg[ \frac{a^{3}c}{\pi} \sum_{k=-\infty}^{0} g_{k}(\theta) \frac{iq_{k}(\theta)}{q_{k}(\theta)-ai} + \frac{a^{2}c^{2}}{\pi} \sum_{k=1}^{+\infty} g_{k}(\theta) \frac{iq_{k}(\theta)}{q_{k}(\theta)-ci} \bigg]. \label{nu on Cc}
\end{align}
In particular, as $\theta \searrow -\frac{\pi}{2}$, 
\begin{align}
& \frac{d\nu(ai+ae^{i\theta})}{d\theta} = \frac{a^{2}(c-a)}{4c\pi}(\theta + \tfrac{\pi}{2})^{2}\big(1+\bigO(\theta + \tfrac{\pi}{2})\big), \label{asymp of dnu near 0 uniform 1} \\
& \frac{d\nu(ci+ce^{i\theta})}{d\theta} = \frac{c^{2}(c-a)}{4a\pi}(\theta + \tfrac{\pi}{2})^{2}\big(1+\bigO(\theta + \tfrac{\pi}{2})\big), \label{asymp of dnu near 0 uniform 2}
\end{align}
and, as $r\to 0$,
\begin{align}\label{asymp of nu near 0 uniform}
\nu\big(\partial \Omega \cap B_r(0) \cap \{z:\re z>0\}\big) = \frac{1}{2} \nu(\partial \Omega \cap B_r(0)) = \frac{c-a}{a c} \frac{r^{3}}{6\pi} \big( 1+\bigO(r) \big).
\end{align}
\end{theorem}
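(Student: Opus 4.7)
The plan is to exploit the M\"obius map $w=1/z$, which sends both osculating circles to parallel horizontal lines and reduces the problem to a Poisson integral on an infinite strip. Writing $|z-ia|=a$ as $|z|^{2}=2a\,\im z$ one obtains $\im(1/z)=-1/(2a)$, and similarly $|z-ic|=c$ is sent to $\im w=-1/(2c)$. Thus $w=1/z$ is a conformal bijection from $\Omega_{a,c}$ onto the horizontal strip $S=\{w:-\tfrac{1}{2a}<\im w<-\tfrac{1}{2c}\}$ of width $h=(c-a)/(2ac)$, with the tacnode $z=0$ corresponding to $w=\infty$. Since $d^{2}z=d^{2}w/|w|^{4}$, the pushforward of $\mu|_{\Omega_{a,c}}$ is the finite measure $d\tilde\mu(w)=d^{2}w/(\pi|w|^{4})$ on $S$, and by the conformal invariance of harmonic measure, the balayage $\nu$ on $\partial\Omega_{a,c}$ corresponds to $\tilde\nu:=\mathrm{Bal}(\tilde\mu,\partial S)$ on $\partial S$.

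To compute $\tilde\nu$, I would use the explicit Poisson kernel of the strip: after shifting $S$ to $\{0<\im w<h\}$, the boundary density on the lower line at $x$ from a source at $(x_{0},y_{0})$ is
\[
P_{L}(x;x_{0},y_{0})=\frac{1}{2h}\,\frac{\sin(\pi y_{0}/h)}{\cosh(\pi(x-x_{0})/h)-\cos(\pi y_{0}/h)},
\]
with an analogous expression on the upper line. The density of $\tilde\nu$ is the integral of this against $\frac{1}{\pi|w_{0}|^{4}}$ over $S$. The key step is the $x_{0}$-integration, which I would perform by residue calculus. As a meromorphic function of $x_{0}\in\C$, the integrand has a double pole at $x_{0}=i|y_{0}|$ coming from $1/|w_{0}|^{4}=1/((x_{0}-iy_{0})(x_{0}+iy_{0}))^{2}$, together with a lattice of simple poles at $x_{0}=x+i(\pm y_{0}+2hk)$, $k\in\Z$, coming from the zeros of the Poisson denominator. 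Closing the $x_{0}$-contour in the upper half plane splits $\tilde\nu$ into two contributions: the residue at the double pole, which after the remaining $y_{0}\in(0,h)$ integration yields in closed form the hyperbolic terms in \eqref{nu on Ca}-\eqref{nu on Cc}; and the residues at the simple poles, which resum into a series over $k\in\Z$.

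To identify this series with the one in the theorem, I would pull the image points back to the $z$-plane via $z=1/w$: the simple-pole positions in $w$ correspond under $1/w$ to the orbits of $ia$ and $ic$ under alternating inversions in the two osculating circles, which is exactly the orbit parametrized by the M\"obius functions $p_{k},q_{k}$ in \eqref{def of pk and qk}. Converting from the boundary coordinate $x=\re w$ to the angular coordinate $\theta$ is a direct computation: on the inner circle $z=ai+ae^{i\theta}$ one finds $w=\frac{\cos\theta}{2a(1+\sin\theta)}-\frac{i}{2a}$, hence $|dx/d\theta|=\frac{1}{2a(1+\sin\theta)}$, which explains the prefactor $\frac{1}{1+\sin\theta}$ in \eqref{nu on Ca} (symmetrically on the outer circle for \eqref{nu on Cc}). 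The asymptotics \eqref{asymp of dnu near 0 uniform 1}-\eqref{asymp of dnu near 0 uniform 2} follow by Taylor-expanding these closed-form expressions as $\theta\searrow-\pi/2$; integrating together with $r=|z|\asymp a(\theta+\pi/2)$ on the inner circle and $r\asymp c(\theta+\pi/2)$ on the outer circle gives \eqref{asymp of nu near 0 uniform}, which also furnishes an independent consistency check against Theorem \ref{mainth2}.

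The two main obstacles are the following. First, organizing the simple-pole residue sum so that it resums cleanly into the image-point series involving $f_{k},g_{k},p_{k},q_{k}$ requires identifying the arithmetic progression of reflected sources in the strip with the Schottky-type orbit of iterated inversions in the $z$-plane, and then carrying out the $y_{0}$-integration in closed form using identities for $\sin(\pi y_0/h)/(\cosh(\cdot)-\cos(\pi y_{0}/h))$. Second, verifying the quadratic vanishing in \eqref{asymp of dnu near 0 uniform 1}-\eqref{asymp of dnu near 0 uniform 2} requires care since the hyperbolic part and the series are separately nonvanishing at $\theta=-\pi/2$: the $(\theta+\pi/2)^{2}$ behavior arises only after a cancellation of their leading orders, a cancellation that is forced by, and consistent with, Theorem \ref{mainth2} applied with $m_{1}=0$, $m_{2}=2$, $d=1$, $b=1$, and per-cusp coefficient of tangency $a_j=(c-a)/(2ac)$.
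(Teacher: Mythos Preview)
Your approach is valid but genuinely different from the paper's. You both start with the same M\"obius reduction (the paper's $\varphi_{1}(z)=i\frac{c}{c-a}\frac{z-2ai}{z}$ is just an affine image of your $1/z$, mapping $\Omega_{a,c}$ to a strip). From there the routes diverge. You propose computing the area integral $\int_{S}P(\cdot;w_{0})\,|w_{0}|^{-4}\,d^{2}w_{0}$ as an iterated integral: residues in $x_{0}$ first (double pole from the density, simple poles from the Poisson denominator), then a $y_{0}$-integral over $(0,h)$. The paper instead composes with $\exp(\pi\,\cdot)$ to go to $\mathbb{H}$, writes $d\nu$ via the normal derivative of the Green function, and then uses Green's theorem to convert the area integral over $\Omega_{a,c}$ into a contour integral over the two boundary circles---crucially using that on $|w-ir|=r$ one has $\bar w=\frac{-irw}{w-ir}$, so the integrand becomes a rational function of $w$. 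A single residue calculation in the $w$-plane then yields the closed-form terms (from poles at $ci$ and $\infty$) and the series (from the lattice of poles $p_{k}$). The paper's trick buys you the elimination of the $y_{0}$-integration altogether; in your scheme that integral is where the real work lies, and you have correctly flagged it as the main obstacle. Your identification of the reflected sources in the strip with the inversion orbit $p_{k},q_{k}$ is exactly right and matches the paper's $p_{k}$ as zeros of $1-\exp(\cdots)$. One small correction: the hyperbolic block in \eqref{nu on Ca} is \emph{not} nonvanishing at $\theta=-\pi/2$; it is exponentially small (the $\cosh$ argument blows up like $2/(\theta+\pi/2)$), so no cancellation against the series is needed there. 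The $(\theta+\pi/2)^{2}$ behaviour comes entirely from the series, and the paper extracts it via an Euler--Maclaurin argument after showing $\int_{0}^{\infty}h_{2}(\lambda)\,d\lambda=0$, which is the actual cancellation.
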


\begin{remark}\label{remark:check the main thm}
Since
\begin{align*}
\Theta_{R}(r) := \frac{1}{r} \big|\Omega_{a,c} \cap \partial B_{r}(0) \cap \{z:\re z>0\}\big| & 
= \arcsin\bigg(\frac{r}{2a}\bigg) - \arcsin\bigg(\frac{r}{2c}\bigg)
	\\
& = \frac{c-a}{2a c}r + \bigO(r^{3}) \qquad \mbox{as } r \to 0,
\end{align*}
the region $\Omega_{a,c}$ has two cusps with order of tangency $d = 1$ and coefficient of tangency $\frac{c-a}{2a c}$ at the origin.
An application of Theorem \ref{mainth2} with $b = 1$, $m_1 = 0$, and $m_2 = 2$ therefore implies that $\pi \nu = \mathrm{Bal}(d^{2}z,\partial \Omega_{a,c})$ satisfies
\begin{align}\label{nuOmegaac}
\pi  \nu(\partial \Omega \cap B_r(z_0)) = (1+o(1)) \frac{c-a}{3a c}r^{3}
\end{align}
as $r \to 0$, which is consistent with \eqref{asymp of nu near 0 uniform}.
\end{remark}

\subsection{Application to gap probabilities for Coulomb gases}

The Coulomb gas model for $n$ points on the plane with external potential $Q:\C\to \R\cup\{+\infty\}$ is the probability measure
\begin{align}
& \frac{1}{Z_{n}}\prod_{1\leq j<k \leq n}|z_{j}-z_{k}|^{\beta} \prod_{j=1}^{n} e^{-n \frac{\beta}{2} Q(z_{j})}d^{2}z_{j}, & & z_{1},\ldots,z_{n}\in \C, \label{general density intro}
\end{align}
where $Z_{n}$ is the normalization constant and $\beta >0$ is the inverse temperature. 

In this paper we will focus on the elliptic Ginibre point process, which corresponds to the external potential 
\begin{align*}
Q(z) = \frac{1}{1-\tau^{2}}\big( |z|^{2}-\tau \, \re z^{2} \big), \qquad \mbox{for some } \tau \in [0,1).
\end{align*}
This model has been widely studied, see e.g. \cite{BFreview} for a recent review, and is illustrated in Figure \ref{fig:density complement ellipse Ginibre} (left). As $n \to \infty$ and to a first order approximation, the points $z_{1},\ldots,z_{n}$ accumulate uniformly on the elliptical region $S$ defined by
\begin{align}\label{Sdef}
S=\Big\{ z\in \C: \Big( \frac{\re z}{1+\tau} \Big)^{2} + \Big( \frac{\im z}{1-\tau} \Big)^{2} \leq 1 \Big\}.
\end{align}
Let $\mu$ be the uniform measure on $S$ given by
\begin{align}\label{mu S EG beginning of intro}
d\mu(z) = \begin{cases} \frac{d^{2}z}{\pi(1-\tau^{2})} & \text{if $z \in S$},
	\\
0 & \text{if $z \notin S$}.
\end{cases}
\end{align}
\textit{Large gap asymptotics} for Coulomb gases are problems consisting in obtaining explicit asymptotic formulas as $n\to + \infty$ for the hole probability $\mathbb{P}(\#\{z_{j}\in \Omega\}=0)$, where $\Omega$ is a given subset of the plane. If $\partial \Omega \subset S$, the leading order term is given in terms of the balayage measure $\mathrm{Bal}(\hat{\mu}|_{\Omega},\partial \Omega)$, where $\hat{\mu}$ is the equilibrium measure associated with the Coulomb gas (for the elliptic Ginibre point process, $\hat{\mu}=\mu$ is given by \eqref{mu S EG beginning of intro}) \cite{AR2017, C2023}.
Large gap asymptotics have been widely studied; see e.g. \cite{A2018, AR2017, APS2009, BP2024, C2021, C2023, ForresterHoleProba, JLM1993} for results in the case where $\Omega\subset S$ has a smooth boundary, and \cite{AR2017, C2023} for some cases where $\Omega\subset S$ has a corner (see also \cite{LMS2018} for important applications to physics). Theorem \ref{thm:EG tacnode C} below provides the first large gap result in a situation where $\Omega$ has a cusp. 

Let $z_0 + e^{i\theta_0}\Omega_{a,c}$ denote the tacnodal region $\Omega_{a,c}$ defined in (\ref{Omegaacdef}) rotated by an angle $\theta_0 \in (-\pi,\pi]$ and then translated by $z_{0}\in \C$:
\begin{align}\label{z0eitheta0Omegaac}
z_0 + e^{i\theta_0}\Omega_{a,c} = \{z:|z-(z_{0}+cie^{i\theta_{0}})|<c\} \setminus \{z:|z-(z_{0}+aie^{i\theta_{0}})|\leq a\}.
\end{align}

Let $\psi(z) := \frac{\Gamma'(z)}{\Gamma(z)}$ be the digamma function, where $\Gamma(z) := \int_{0}^{+\infty}t^{z-1}e^{-t}dt$ is the Gamma function. 
The proof of the following theorem is presented in Section \ref{holesec}.

\begin{figure}
\begin{center}
\includegraphics[width=10cm]{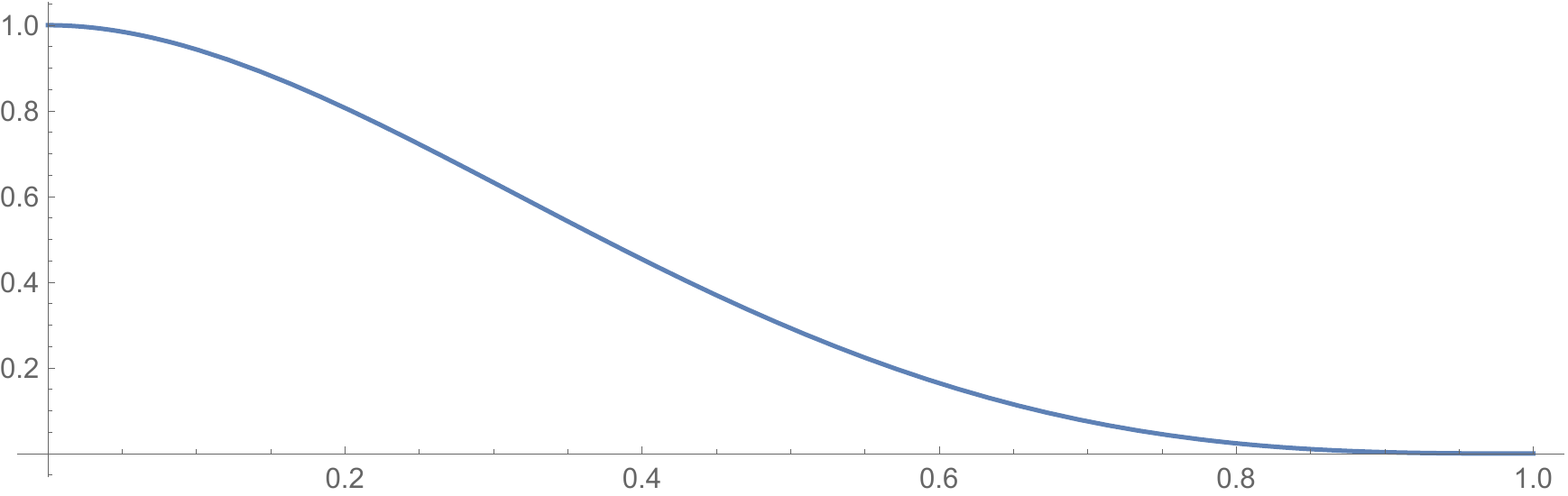}
\end{center}
\caption{\label{fig:F}The function $x \mapsto F(x)$ defined in \eqref{def of F}.}
\end{figure}

\begin{theorem}[Gap probability]\label{thm:EG tacnode C}
Let $\beta>0$. Let $\tau\in [0,1)$, $z_{0}\in \C$, $\theta_{0}\in (-\pi,\pi]$, and $0<a<c$ be such that the tacnodal region $z_0 + e^{i\theta_0}\Omega_{a,c}$ in (\ref{z0eitheta0Omegaac}) is a subset of the elliptical region $S$ defined in (\ref{Sdef}). 
As $n \to +\infty$, we have 
\begin{align}\label{asymp hole in thm}
\mathbb{P}(\# \{z_{j}\in z_0 + e^{i\theta_0}\Omega_{a,c}\} = 0) = \exp \big( -C n^{2}+o(n^{2}) \big)
\end{align}
with $C=\frac{\beta c^{4}}{8(1-\tau^{2})^{2}}F(\frac{a}{c})$, where $F(x)$ is defined for $x\in (0,1)$ by
\begin{align}\label{def of F}
F(x) = 1 -x^4-4 x^2 \psi'\left(\frac{1}{1-x}\right).
\end{align}
Using well-known special values of $\psi'$ (see e.g. \cite[5.4.14, 5.4.15, 5.15.4]{NIST}), we get
\begin{align*}
& C|_{\frac{a}{c}=\frac{1}{2}}=\frac{93-8\pi^{2}}{384} \beta c^{4}, & & C|_{\frac{a}{c}=\frac{1}{3}}=\frac{112-9\pi^{2}}{324} \beta c^{4}, & & C|_{\frac{a}{c}=\frac{2}{3}}=\frac{245-24\pi^{2}}{648}\beta c^{4}, \\
& C|_{\frac{a}{c}=\frac{3}{4}}=\frac{959-96\pi^{2}}{2048} \beta c^{4}, & & C|_{\frac{a}{c}=\frac{3}{5}}=\frac{2272-225\pi^{2}}{2500} \beta c^{4}, & & C|_{\frac{a}{c}=\frac{4}{5}}=\frac{23821-2400\pi^{2}}{45000}\beta c^{4}.
\end{align*}
\end{theorem}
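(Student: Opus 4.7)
The plan is to combine the general large-gap asymptotic principle for two-dimensional Coulomb gases, developed in \cite{AR2017, C2023}, with the explicit balayage formula of Theorem \ref{thm:EG tacnode nu}. For $\Omega\subset S$ satisfying mild regularity, that framework expresses the leading hole-probability constant $C$ in \eqref{asymp hole in thm} purely in terms of $\hat\mu|_{\Omega}$ and $\nu:=\mathrm{Bal}(\hat\mu|_{\Omega},\partial\Omega)$, essentially as a weighted logarithmic-energy functional combined with integrals of the external potential $Q$. For the elliptic Ginibre process, $\hat\mu$ coincides with the uniform measure $\mu$ from \eqref{mu S EG beginning of intro}. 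The inclusion $z_{0}+e^{i\theta_{0}}\Omega_{a,c}\subset S$ in the hypothesis is precisely what allows us to invoke that framework; the cusps at the tacnode are admissible because Theorem \ref{mainth2} (applied with $m_{1}=0$, $m_{2}=2$, $d=1$, $b=1$) shows that $\nu$ is sufficiently well-behaved near the tacnode for the general large-gap theorem to apply.

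Using translation and rotation invariance of the logarithmic energy together with the constant density $\tfrac{1}{\pi(1-\tau^{2})}$ of $\hat\mu$, all dependence on $z_{0}$, $\theta_{0}$, and $\tau$ factors out as the prefactor $\frac{1}{(1-\tau^{2})^{2}}$, reducing the problem to the standardized region $\Omega_{a,c}$. The remaining task is to evaluate the logarithmic-energy functional of $\tfrac{1}{\pi}\mathbf{1}_{\Omega_{a,c}}d^{2}z-d\nu$ together with its $Q$-coupling, using $\nu$ as given explicitly in \eqref{nu on Ca}--\eqref{nu on Cc}. The self-energy of $\mathbf{1}_{\Omega_{a,c}}d^{2}z$, the $Q$-integral against $\mu|_{\Omega_{a,c}}$, and the area $|\Omega_{a,c}|=\pi(c^{2}-a^{2})$ together produce the polynomial part $1-x^{4}$ of $F(x)$, where $x=a/c$. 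The genuinely novel contribution comes from the cross-term $\iint\log|z-w|^{-1}\,d\nu(z)\,d\mu|_{\Omega_{a,c}}(w)$ and the self-energy $\iint\log|z-w|^{-1}\,d\nu(z)\,d\nu(w)$: using the parametrizations $z=ai+ae^{i\theta}$ and $z=ci+ce^{i\theta}$, the image-charge series involving $f_{k},g_{k},p_{k},q_{k}$ in \eqref{nu on Ca}--\eqref{nu on Cc}, once integrated in $\theta$ via residues, collapse into sums of the form $\sum_{k\geq 0}\bigl(k+\tfrac{1}{1-x}\bigr)^{-2}$. The trigamma identity $\psi'(y)=\sum_{k\geq 0}(k+y)^{-2}$ then identifies this contribution with $-4x^{2}\psi'\bigl(\tfrac{1}{1-x}\bigr)$, yielding the stated constant $C=\frac{\beta c^{4}}{8(1-\tau^{2})^{2}}F(a/c)$.

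The main obstacle I anticipate lies in the bookkeeping of this last evaluation: several of the sums in \eqref{nu on Ca}--\eqref{nu on Cc} are only conditionally convergent, and organizing them so that the geometric structure collapses cleanly requires exploiting the M\"obius symmetry of the two osculating circles (the common tangent line at the origin is fixed by the M\"obius map exchanging the two disks). I would rewrite each conditionally convergent sum as a geometric series in a uniformizing coordinate sending $\Omega_{a,c}$ to a strip, which permits termwise integration and makes the cancellations between the inner-circle and outer-circle contributions transparent. The small-$r$ asymptotics \eqref{asymp of dnu near 0 uniform 1}--\eqref{asymp of nu near 0 uniform} and Remark \ref{remark:check the main thm} provide an internal consistency check at intermediate steps, while the tabulated rational special values of $\psi'$ \cite{NIST} permit an end-to-end numerical verification at the listed values $a/c\in\{\tfrac{1}{2},\tfrac{1}{3},\tfrac{2}{3},\tfrac{3}{4},\tfrac{3}{5},\tfrac{4}{5}\}$.
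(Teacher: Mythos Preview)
Your overall strategy---invoke the general large-gap framework for planar Coulomb gases, reduce to $\tau=0$, $z_{0}=0$, $\theta_{0}=0$ via the scaling relation, and then feed the explicit balayage density of Theorem \ref{thm:EG tacnode nu} into the resulting formula---matches the paper's. The residue mechanism you anticipate (integrate the image-charge series in $\theta$, collapse to $\sum_{k}(k+\tfrac{1}{1-x})^{-2}$, recognize $\psi'$) is also what actually happens.

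Where your proposal diverges is in the \emph{form} of the constant $C$ you plan to evaluate. You describe it as a logarithmic-energy functional of the signed measure $\mu|_{\Omega}-\nu$, involving a self-energy of $\mu|_{\Omega}$, a cross-term $\iint\log|z-w|^{-1}\,d\nu\,d\mu|_{\Omega}$, and a self-energy of $\nu$. That is the raw rate-function difference, but the balayage identity $U^{\nu}=U^{\mu|_{\Omega}}$ on $\partial\Omega$ collapses all of the logarithmic-energy terms, and the formula one actually has to evaluate (this is \cite[Theorem~2.5\,(i)]{C2023}, quoted as \eqref{C in computation} in the paper) is simply
\[
C=\frac{\beta}{4}\bigg(\int_{\partial\Omega_{a,c}}Q\,d\nu-\frac{1}{1-\tau^{2}}\int_{\Omega_{a,c}}Q\,\frac{d^{2}z}{\pi}\bigg).
\]
For $\tau=0$ this is just $\frac{\beta}{4}\big(\int_{\partial\Omega_{a,c}}|z|^{2}\,d\nu-\int_{\Omega_{a,c}}|z|^{2}\,\tfrac{d^{2}z}{\pi}\big)$: a single $\theta$-integral of $|z|^{2}$ against the explicit density \eqref{nu on Ca}--\eqref{nu on Cc}, minus an elementary area integral. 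No double logarithmic integrals enter at all. Your attribution of ``$1-x^{4}$ from the self-energy and area, $-4x^{2}\psi'$ from the cross-term'' is therefore not how the decomposition actually runs; both pieces of $F$ receive contributions from $\int_{\partial\Omega}|z|^{2}\,d\nu$.

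This is not a fatal error---your route would in principle reach the same answer---but it is a substantial detour. Computing the double integrals you list against the series \eqref{nu on Ca}--\eqref{nu on Cc} is considerably harder than computing the single $\theta$-integrals $\int(1+\sin\theta)\,d\nu(ai+ae^{i\theta})$ and $\int(1+\sin\theta)\,d\nu(ci+ce^{i\theta})$, which is all the paper needs. Once you use the reduced formula, the ``conditionally convergent'' worry you flag largely evaporates: the $\theta$-integrals of the individual summands are computed by residues (the poles $m_{\alpha_k}$, $m_{p_k}$ lie inside or outside the unit circle according to explicit thresholds in $k$), and the resulting $k$-sums are absolutely convergent and combine directly via $\psi(z)=-\gamma_{\mathrm E}+\sum_{m\geq0}\frac{z-1}{(m+1)(m+z)}$ and $\psi'(z)=\sum_{m\geq0}(m+z)^{-2}$.
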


As $\frac{a}{c}$ increases from $0$ to $1$ with $c$ fixed, the tacnodal region $z_0 + e^{i\theta_0}\Omega_{a,c}$ shrinks to the empty set, and hence the probability that there are no points in $z_0 + e^{i\theta_0}\Omega_{a,c}$ increases to $1$. This means that $F(x)$ should be a decreasing function of $x \in (0,1)$ that approaches $0$ as $x \to 1$. On the other hand, when $a=0$, $z_0 + e^{i\theta_0}\Omega_{a,c}$ is a disk, in which case it is known from \cite[Remark 2.11 and Theorem 2.24]{C2023} that \eqref{asymp hole in thm} holds with $C= \frac{\beta c^{4}}{8(1-\tau^{2})^{2}}$. Hence we also expect $F(0)=1$. We verify these properties in the next proposition, whose proof is included in Appendix \ref{Fapp}; see also Figure \ref{fig:F}. 

\begin{proposition}\label{Fprop}
The function $F(x)$ defined in (\ref{def of F}) for $x \in (0,1)$ extends to a continuous strictly decreasing function of $x \in [0,1]$ with $F(0) = 1$ and $F(1) = 0$. The behavior of $F(x)$ near $x = 0$ and $x = 1$ is
\begin{align}\label{Fnear0and1}
& F(x) = 1-\frac{2\pi^{2}}{3}x^{2} + \bigO(x^{3}) & & \mbox{as } x \to 0, \\
& F(x) = \frac{10}{3}(1-x)^{3} + \bigO((x-1)^{4}) & & \mbox{as } x \to 1.
\end{align}
\end{proposition}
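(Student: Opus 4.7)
The plan is to reduce each of the four claims---continuous extension to $[0,1]$, the boundary values $F(0)=1$ and $F(1)=0$, the two asymptotic expansions, and strict monotonicity---to standard properties of the trigamma function $\psi'$. I first note that $F$ is real analytic on $(0,1)$ because $1/(1-x)$ maps $(0,1)$ bijectively onto $(1,\infty)$ and $\psi'$ is analytic there. The value $F(0) = 1$ follows from $\psi'(1) = \pi^{2}/6$ and the prefactor $x^{2}$. For the limit as $x \to 1^{-}$, I would use the standard asymptotic expansion $\psi'(y) = 1/y + 1/(2y^{2}) + 1/(6y^{3}) + \mathcal{O}(y^{-5})$ as $y \to \infty$ with $y = 1/(1-x)$, which gives $\psi'(1/(1-x)) \to 0$ and hence $F(1) = 0$.

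For the two expansions I would compute directly from the corresponding expansions of $\psi'$. Near $x = 0$: writing $1/(1-x) = 1 + \mathcal{O}(x)$ and Taylor-expanding $\psi'$ around $1$ gives $\psi'(1/(1-x)) = \pi^{2}/6 + \mathcal{O}(x)$, so $4x^{2}\psi'(1/(1-x)) = (2\pi^{2}/3)x^{2} + \mathcal{O}(x^{3})$; since $x^{4} = \mathcal{O}(x^{3})$, this yields $F(x) = 1 - (2\pi^{2}/3)x^{2} + \mathcal{O}(x^{3})$. Near $x = 1$: set $u = 1-x$. The asymptotic series gives $\psi'(1/u) = u + u^{2}/2 + u^{3}/6 + \mathcal{O}(u^{5})$, and multiplying by $4x^{2} = 4 - 8u + 4u^{2}$ produces $4x^{2}\psi'(1/(1-x)) = 4u - 6u^{2} + (2/3)u^{3} + \mathcal{O}(u^{4})$. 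Since $1 - x^{4} = 4u - 6u^{2} + 4u^{3} - u^{4}$, the terms of order $u$ and $u^{2}$ cancel exactly and the $u^{3}$ coefficients combine to $4 - 2/3 = 10/3$, giving $F(x) = (10/3)(1-x)^{3} + \mathcal{O}((1-x)^{4})$.

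The main obstacle is strict monotonicity. Using $dy/dx = y^{2}$ with $y = 1/(1-x)$ and then substituting $x = (y-1)/y$, the derivative can be rewritten as
\[
F'(x) = -\frac{4(y-1)}{y^{3}}\,K(y), \qquad K(y) := (y-1)^{2} + 2y^{2}\psi'(y) + y^{3}(y-1)\psi''(y).
\]
Since $(y-1)/y^{3} > 0$ for $y > 1$, the task reduces to proving $K(y) > 0$ on $(1,\infty)$. Using the series $\psi'(y) = \sum_{n\geq 0}(y+n)^{-2}$ and $\psi''(y) = -2\sum_{n\geq 0}(y+n)^{-3}$, a short manipulation gives
\[
K(y) = (y-1)^{2} + 2y^{2}\sum_{n=0}^{\infty}\frac{n - y(y-2)}{(y+n)^{3}}.
\]
For $y \in (1,2]$ every summand is non-negative because $y(y-2) \leq 0$, and $K(y) > 0$ is immediate. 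For $y > 2$ there is a genuine difficulty since the summands with $n < y(y-2)$ are negative. Here I would pass to the integral representation
\[
K(y) = (y-1)^{2} + y^{2}\!\int_{0}^{\infty}\!\frac{t\,e^{-yt}}{1-e^{-t}}\bigl(2 - y(y-1)t\bigr)\,dt,
\]
and compare with the baseline obtained by replacing $1/(1-e^{-t})$ by $1$, which evaluates explicitly to $(y-2)^{2} + 1 > 0$. The correction $1/(1-e^{-t}) - 1 = \sum_{k\geq 1}e^{-kt}$ is controlled term-by-term, producing an explicit finite sum one can estimate. Combined with the asymptotic $K(y) = 5/2 + 5/(6y) + \mathcal{O}(y^{-2})$ as $y \to \infty$---which I obtain by adding the expansions $2y^{2}\psi'(y) = 2y + 1 + 1/(3y) + \mathcal{O}(y^{-3})$ and $y^{3}(y-1)\psi''(y) = -y^{2} + 1/2 + 1/(2y) + \mathcal{O}(y^{-2})$ to $(y-1)^{2}$---and with continuity on a compact intermediate interval, this yields $K(y) > 0$ on $(2,\infty)$ as well.

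As a more conceptual alternative for the strict monotonicity, one can instead invoke Theorem \ref{thm:EG tacnode C}: for fixed $c$ the tacnodal region $z_{0} + e^{i\theta_{0}}\Omega_{a,c}$ shrinks as $a$ increases towards $c$, so the hole probability is non-decreasing in $a$, which forces $C = \tfrac{\beta c^{4}}{8(1-\tau^{2})^{2}}F(a/c)$ to be weakly decreasing in $a$. Hence $F$ is weakly decreasing on $(0,1)$, and by real-analyticity together with $F(0) = 1 \neq 0 = F(1)$, it must be strictly decreasing.
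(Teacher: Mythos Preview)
Your computations of the two expansions and the reduction of strict monotonicity to the positivity of
\[
K(y)=(y-1)^{2}+2y^{2}\psi'(y)+y^{3}(y-1)\psi''(y), \qquad y\in(1,\infty),
\]
are correct and coincide with the paper's argument (the paper calls this function $G$). The difference lies in how $K(y)>0$ is established.

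Your primary route is incomplete for $y>2$. The series argument cleanly handles $y\in(1,2]$, and the asymptotic $K(y)=\tfrac{5}{2}+\tfrac{5}{6y}+\mathcal{O}(y^{-2})$ correctly gives positivity for all sufficiently large $y$. But the sentence ``combined \ldots\ with continuity on a compact intermediate interval, this yields $K(y)>0$'' is not a proof: continuity alone does not produce a lower bound on a compact set, and you have not actually carried out the estimate you allude to for the integral-representation approach. As written, there is a genuine gap on an interval of the form $(2,Y_{0}]$.

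The paper closes this gap by a much shorter device: it proves the two inequalities $\psi'(y)\geq 1/y$ and $\psi''(y)\geq -1/(y-\tfrac12)^{2}$ for $y>1$ by descent, using the recurrences $\psi'(y+1)=\psi'(y)-1/y^{2}$ and $\psi''(y+1)=\psi''(y)+2/y^{3}$ together with the large-$y$ asymptotics; substituting these bounds into $K$ gives $K(y)\geq (5y^{2}-4y+1)/(2y-1)^{2}>0$ for all $y>1$ in one stroke. This avoids any case splitting or numerical verification.

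Your alternative argument via Theorem~\ref{thm:EG tacnode C} is, however, correct and complete: monotonicity of hole probabilities under domain inclusion gives that $F$ is weakly decreasing, and real-analyticity of $F$ on $(0,1)$ together with $F(0)\neq F(1)$ upgrades this to strict monotonicity (a non-strictly-decreasing real-analytic function would be constant on an interval, hence everywhere). This is a legitimate and rather elegant shortcut; it is logically independent of Proposition~\ref{Fprop} since the proof of Theorem~\ref{thm:EG tacnode C} does not use the proposition. The trade-off is that it relies on the full Coulomb-gas machinery rather than being a self-contained special-function estimate.
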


\section{A single cusp: proof of Theorem \ref{mainth}}\label{proofsec}
Let $\Omega$ be a finitely connected Jordan domain in $\C^*$ with a cusp with order of tangency $d > 0$ and coefficient of tangency $a>0$ at a point $z_0 \in \partial \Omega \cap \C$. By applying a translation, we may assume without loss of generality that $z_0 = 0$. Fix $\epsilon>0$ and $b > 0$. By choosing $\rho_0 > 0$ small enough, we may assume that $\Omega \cap \partial B_r(0)$ is a connected set whose length $r\Theta(r)$ obeys
\begin{align}\label{Thetasingle}
r\Theta(r) \leq (1+\epsilon)a r^{d+1} \qquad \text{for $0 < r \leq \rho_0$}
\end{align}
and that (\ref{omegaestimate}) holds for a single cusp at $0$, so that if $r > 0$ and $z \in \Omega$, then 
\begin{align}\label{omegaleq8pirzsingle}
\omega(z, \partial \Omega \cap B_{r}(0), \Omega) \leq 
\begin{cases}
\frac{8}{\pi} \exp\big( -\pi \frac{r^{-d}-|z|^{-d}}{(1+\epsilon)a d} \big)   & \text{if $0 < r < |z| \leq \rho_0$},
	\\
\frac{8}{\pi} \exp \big( -\pi \frac{r^{-d}-\rho_{0}^{-d}}{(1+\epsilon)a d} \big) & \text{if $0 < r < \rho_0 \leq |z|$}.
\end{cases}
\end{align}

Let $\mu$ be a non-negative measure on $\Omega$ of finite total mass such that $d\mu(z) = (1+o(1)) |z|^{2b-2}d^{2}z$ as $z\to 0$. 
Shrinking $\rho_0>0$ if necessary, we may assume that
\begin{align}\label{c1muc2}
(1-\epsilon) \int_{A}|z|^{2b-2}d^{2}z \leq \mu(A) \leq (1+\epsilon) \int_{A}|z|^{2b-2}d^{2}z
\end{align} 
for all measurable subsets $A$ of $\Omega \cap \overline{B_{\rho_0}(0)}$. Let $\nu = \mathrm{Bal}(\mu,\partial \Omega)$.

\subsection{Proof of upper bound}
We first prove the upper bound in \eqref{nuestimate eps}.

Recall that, by definition, $\nu(\partial \Omega \cap B_r(0)) = \int_\Omega \omega(z, \partial \Omega \cap B_r(0), \Omega) d\mu(z)$. For $r \in (0, \rho_0)$ we write $\nu(\partial \Omega \cap B_r(0))$ as the sum of three integrals:
$$\nu(\partial \Omega \cap B_r(0)) = I_{1,r} + I_{2,r} + I_{3,r},$$
where
\begin{align*}
&  I_{1,r} := \int_{\Omega \cap \overline{B_{r}(0)}} \omega(z, \partial \Omega \cap B_r(0), \Omega) d\mu(z), 
&& I_{2,r} := \int_{(\Omega \cap B_{\rho_0}(0))\setminus \overline{B_{r}(0)}} \omega(z, \partial \Omega \cap B_r(0), \Omega) d\mu(z),
	\\
&
I_{3,r} := \int_{\Omega \setminus B_{\rho_0}(0)} \omega(z, \partial \Omega \cap B_r(0), \Omega) d\mu(z).
\end{align*}
Using the fact that the harmonic measure of any set is $\leq 1$, (\ref{c1muc2}), and (\ref{Thetasingle}), we find
\begin{align}
I_{1,r} & \leq \int_{\Omega \cap \overline{B_{r}(0)}}  d\mu(z)
\leq (1+\epsilon) \int_{\Omega \cap \overline{B_{r}(0)}}  |z|^{2b-2}d^{2}z
\leq  (1+\epsilon)^{2} a \int_0^r \rho^{2b + d-1} d\rho \nonumber \\
& = \frac{(1+\epsilon)^2 a}{2b+d}r^{2b+d} \label{I1restimate}
\end{align}
for all $r \in (0, \rho_0)$, provided that $\rho_{0}>0$ is chosen small enough. To estimate $I_{2,r}$, we use (\ref{Thetasingle}), (\ref{omegaleq8pirzsingle}), and (\ref{c1muc2}) to write, for $r \in (0, \rho_0)$ (shrinking again $\rho_{0}$ if necessary),
\begin{align}\nonumber
I_{2,r} 
	& \leq  (1+\epsilon)\int_{(\Omega \cap B_{\rho_0}(0))\setminus \overline{B_{r}(0)}} \frac{8}{\pi} \exp \bigg( -\pi \frac{r^{-d}-|z|^{-d}}{(1+\epsilon)  a d} \bigg) |z|^{2b-2}d^{2}z \nonumber
	\\
& \leq (1+\epsilon) \int_r^{\rho_0} \frac{8}{\pi} \exp \bigg( -\pi \frac{r^{-d} - \rho^{-d}}{(1+\epsilon) a d} \bigg) a \rho^{d}(1+ \epsilon) \rho^{2b-1}d\rho \nonumber \\
& \leq (1+\epsilon)^2\tilde{I}_{2,r}, \label{I2restimate}
\end{align}
where
\begin{align*}
\tilde{I}_{2,r} := \int_r^{\rho_0} \frac{8 a}{\pi} \exp \bigg( -\pi \frac{r^{-d} - \rho^{-d}}{(1+\epsilon) a d} \bigg)  \rho^{2b+d-1}d\rho.
\end{align*}
Let $R=\frac{\rho_{0}}{r}$ and $a_{2}:=(1+\epsilon) a$. In what follows, we use $C>0$ and $c>0$ to denote generic strictly positive constants. With the changes of variables $\rho = r\tilde{\rho}$ and $y=r^{-d}(1-\tilde{\rho}^{-d})$, we get
\begin{align*}
\tilde{I}_{2,r} & = \frac{8a}{\pi} r^{2b+d} \int_{1}^{R} \exp \bigg( -\pi r^{-d} \frac{1-\tilde{\rho}^{-d}}{a_{2}d} \bigg) \tilde{\rho}^{2b+d-1}d\tilde{\rho} \\
& = \frac{8a}{\pi d}r^{2b+2d} \int_{0}^{r^{-d}(1-R^{-d})} \frac{\exp ( -\frac{\pi y}{a_{2}d} )}{(1-r^{d}y)^{\frac{2b}{d}+2}}dy \\
& = \frac{8a}{\pi d}r^{2b+2d} \bigg( \int_{0}^{\frac{r^{-d}}{2}} \frac{\exp ( -\frac{\pi y}{a_{2}d} )}{(1-r^{d}y)^{\frac{2b}{d}+2}}dy + \int_{\frac{r^{-d}}{2}}^{r^{-d}(1-R^{-d})} \frac{\exp ( -\frac{\pi y}{a_{2}d} )}{(1-r^{d}y)^{\frac{2b}{d}+2}}dy \bigg) 
	\\
& \leq \frac{8a}{\pi d}r^{2b+2d} \bigg( \int_{0}^{\frac{r^{-d}}{2}} \frac{\exp ( -\frac{\pi y}{a_{2}d} )}{(1/2)^{\frac{2b}{d}+2}} dy 
+ \int_{\frac{r^{-d}}{2}}^{r^{-d}(1-R^{-d})} \frac{\exp(-\frac{\pi}{a_{2}d}\frac{r^{-d}}{2})}{R^{-2b-2d}}dy
 \bigg) 
	\\
& \leq C r^{2b+2d} \bigg( 1 
+ r^{-2b-3d} e^{-\frac{\pi}{a_{2}d}\frac{r^{-d}}{2}} 
 \bigg) 
  \leq C r^{2b+2d}.
\end{align*}
for all $r \in (0, \rho_0)$.
Finally, using (\ref{omegaleq8pirzsingle}) and the fact that $\mu$ has finite total mass, we obtain
\begin{align}\label{I3restimate}
I_{3,r} 
\leq  \frac{8}{\pi} \exp \bigg( -\pi\frac{r^{-d}-\rho_{0}^{-d}}{a_{2} d} \bigg) \int_{\Omega \setminus B_{\rho_0}(0)} d\mu(z)
\leq C e^{-c r^{-d}}
\end{align}
for $r \in (0, \rho_0)$. Combining $\nu(\partial \Omega \cap B_r(0)) = I_{1,r} + I_{2,r} + I_{3,r}$ with (\ref{I1restimate}), (\ref{I2restimate}), and (\ref{I3restimate}), we find after shrinking $\rho_{0}$ if necessary that
$$\nu(\partial \Omega \cap B_r(0)) \leq \frac{(1+\epsilon)^3 a}{2b+d} r^{2b+d} \qquad \text{for all $r \in (0, \rho_0)$},
$$
which is the desired upper bound on $\nu(\partial \Omega \cap B_r(0))$ stated in \eqref{nuestimate eps} (since $\epsilon >0$ is arbitrary).

\subsection{Proof of lower bound}
The open set $U := \Omega \cap B_{\rho_0}(0)$ has a cusp at $0$. By applying a rotation if necessary, we may assume that the cusp is tangent to $\R_{\geq 0}$. 
Furthermore, shrinking $\rho_0 > 0$ if necessary, we may assume that $U$ is simply connected and that there exists a $p>0$ such that the map $\varphi(z) = z^p$ is bijective on $U$ and $\varphi(U)$ has an analytic cusp at $0$ with small perturbation of angles. 

Using that $\omega(z, \partial U \cap B_r(0), U)$ is everywhere nonnegative, we get
$$\nu(\partial \Omega \cap B_r(0)) 
= \int_\Omega \omega(z, \partial U \cap B_r(0), \Omega) d\mu(z)
\geq \int_{U} \omega(z, \partial U \cap B_r(0), \Omega) d\mu(z).$$
For $r \in (0, \rho_0]$, we have $\partial \Omega \cap B_r(0) = \partial U \cap B_r(0)$ and, by the maximum principle, $\omega(z, \partial U \cap B_r(0), \Omega) \geq \omega(z, \partial U \cap B_r(0), U)$ for all $z\in U$. Hence, using also (\ref{c1muc2}), we obtain
$$\nu(\partial \Omega \cap B_r(0)) \geq \int_U \omega(z, \partial U \cap B_r(0), U) d\mu(z) \geq (1-\epsilon) \int_U \omega(z, \partial U \cap B_r(0), U) |z|^{2b-2}d^{2}z.$$
To complete the proof of Theorem \ref{mainth} it is therefore enough to show that
\begin{align}\label{intUomega}
\int_U \omega(z, \partial U \cap B_r(0), U) |z|^{2b-2}d^{2}z \geq c_{0} r^{2b+d}
\end{align}
for all sufficiently small $r > 0$, where $c_{0}=(1-\epsilon)\frac{a}{2b+d}$. 

The next lemma shows that (\ref{intUomega}) will follow if we can show a related bound where $U$ is replaced by $\varphi(U)$.

\begin{lemma}\label{varphilemma}
If
\begin{align}\label{intvarphiU}
I_r := \int_{\varphi(U)} \omega(z, \partial \varphi(U) \cap B_r(0), \varphi(U)) |z|^{2\frac{b}{p}-2}d^{2}z \geq c_{0} p^{2} r^{\frac{2b+d}{p}}
\end{align}
for all sufficiently small $r > 0$, then (\ref{intUomega}) holds for all sufficiently small $r > 0$. 
\end{lemma}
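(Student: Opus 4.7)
The plan is to use conformal invariance of harmonic measure together with the change-of-variables formula to reduce the integral on the left-hand side of (\ref{intUomega}) to $I_{r^{p}}$, and then to plug in the hypothesis (\ref{intvarphiU}).

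First I will exploit the geometry of the cusp. Because the cusp at $0$ is tangent to $\R_{\geq 0}$, every $z\in U$ has argument in a small neighborhood of $0$, so a single-valued branch of $z^{p}$ is well defined on $U$ and satisfies $|\varphi(z)|=|z|^{p}$. In particular $\varphi$ maps $U\cap B_{r}(0)$ bijectively onto $\varphi(U)\cap B_{r^{p}}(0)$, and taking relative boundaries yields
\[
\varphi\bigl(\partial U\cap B_{r}(0)\bigr)=\partial \varphi(U)\cap B_{r^{p}}(0).
\]
Since $\varphi$ is a conformal bijection from $U$ onto $\varphi(U)$, conformal invariance of harmonic measure then gives
\[
\omega\bigl(z,\partial U\cap B_{r}(0),U\bigr)=\omega\bigl(\varphi(z),\partial \varphi(U)\cap B_{r^{p}}(0),\varphi(U)\bigr)\qquad (z\in U).
\]

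Second, I will change variables $w=\varphi(z)=z^{p}$ in the integral on the left of (\ref{intUomega}). The Jacobian is $|\varphi'(z)|^{2}=p^{2}|z|^{2(p-1)}$, i.e.\ $d^{2}z=p^{-2}|w|^{-2(p-1)/p}d^{2}w$, while $|z|^{2b-2}=|w|^{(2b-2)/p}$. The arithmetic of exponents
\[
\frac{2b-2}{p}-\frac{2(p-1)}{p}=\frac{2b}{p}-2
\]
then produces exactly the weight appearing in $I_{r^{p}}$, giving
\[
\int_{U}\omega(z,\partial U\cap B_{r}(0),U)\,|z|^{2b-2}\,d^{2}z=\frac{1}{p^{2}}\,I_{r^{p}}.
\]

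Finally, assuming (\ref{intvarphiU}) holds for all sufficiently small arguments, I apply it with $r^{p}$ in place of $r$: since $r^{p}\to 0$ when $r\to 0$, this yields $I_{r^{p}}\geq c_{0}p^{2}(r^{p})^{(2b+d)/p}=c_{0}p^{2}r^{2b+d}$, and dividing by $p^{2}$ gives (\ref{intUomega}). The only step requiring genuine care is the geometric identity $\varphi(U\cap B_{r}(0))=\varphi(U)\cap B_{r^{p}}(0)$, which underpins both the transport of the harmonic measure and the global validity of the change of variables; this relies on the tangency direction of the cusp so that $\arg z$ stays in a small interval throughout $U$. Everything else is a routine bookkeeping of exponents and a standard appeal to conformal invariance.
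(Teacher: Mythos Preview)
Your proof is correct and takes essentially the same approach as the paper: both use the change of variables $w=z^{p}$ together with conformal invariance of harmonic measure and the identity $|\varphi(z)|=|z|^{p}$ to relate the integral over $U$ to $I_{r^{p}}$. The only cosmetic difference is the direction of the substitution---the paper starts from $I_{r}$ and pulls back to $U$, then replaces $r$ by $r^{p}$, whereas you push forward from $U$ to $\varphi(U)$ directly.
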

\begin{proof}
Making the change of variables $z = \varphi(v) = v^p$ and using that $\varphi:U \to \varphi(U)$ is a bijection, we obtain
\begin{align*}
c_{0} p^{2} r^{\frac{2b+d}{p}} 
\leq I_r & = \int_U \omega(\varphi(v), \partial \varphi(U) \cap B_r(0), \varphi(U)) 
|\varphi(v)|^{2\frac{b}{p}-2} p^2 |v|^{2p-2} d^2 v 
	\\
& = 
\int_{U} \omega(v, \partial U \cap B_{r^{1/p}}(0), U) p^2 |v|^{2b-2} d^{2}v, 
\end{align*}
where the second equality follows by the conformal invariance of harmonic measure.
Replacing $r$ by $r^p$, the desired conclusion follows. 
\end{proof}

We will prove (\ref{intvarphiU}) by applying the following result. Let $\mathbb{H} := \{z\in \C: \im z >0\}$.

\begin{lemma}\label{Kaiserlemma}
  Let $\Omega_1 \subset \C$ be a simply connected domain with an analytic cusp at $0$ with small perturbation of angles which is tangent to $\R_{\geq 0}$ at $0$. Let $f:\mathbb{H} \to \Omega_1$ be a conformal map with $f(0) = 0$. Then, as $w \in \mathbb{H}$ tends to $0$,
\begin{align}
& f(w) = \bigg(-\frac{\pi}{d_1 a_1 \log |w|}\bigg)^{\frac{1}{d_1}}(1 + o(1))  \label{fnear0} \\
& f'(w) = f(w)\frac{-1}{d_{1}w\log |w|}(1+o(1)). \label{fdernear0}
\end{align}
where $d_1$ is the order of tangency and $a_1$ is the coefficient of tangency of the analytic cusp. 
\end{lemma}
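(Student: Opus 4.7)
The plan is to reduce the problem to studying a conformal map onto a domain that is asymptotically a half-strip, where the map can be written explicitly as a logarithm. First, I would introduce the explicit conformalization
$$R(z) := \frac{1}{d_1 a_1 z^{d_1}},$$
which maps a punctured neighborhood of $0$ in $\Omega_1$ bijectively onto a neighborhood of $\infty$ in its image. For $z = re^{i\theta}\in \Omega_1$ with $r$ small and $0<\theta<\Theta(r) = a_1 r^{d_1} + \bigO(r^{2d_1+1})$ (here the small perturbation of angles hypothesis enters crucially), one computes $|R(z)| = (d_1 a_1 r^{d_1})^{-1}$ and
$$\arg R(z) = -d_1\theta \in \bigl(-1/|R(z)| + \bigO(|R(z)|^{-2-1/d_1}),\,0\bigr).$$
Thus, near $\infty$, $R(\Omega_1)$ is sandwiched between the positive real axis and a curve asymptotic to the horizontal line $\im w = -1$; it is an analytic perturbation of the half-strip $\mathcal{S}_{M} := \{w : \re w > M,\ -1 < \im w < 0\}$ with an error decaying at a summable rate.

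The exact conformal map from $\mathbb{H}$ onto the full strip $\{-1<\im w<0\}$ sending $0\mapsto +\infty$ and $\infty\mapsto -\infty$ is $L(w) := -\pi^{-1}\log w$. The central analytic step is to show that $F := R\circ f : \mathbb{H} \to R(\Omega_1)$, which is a conformal map with $F(0)=\infty$, satisfies
$$F(w) = -\frac{1}{\pi}\log w + o(1), \qquad F'(w) = -\frac{1}{\pi w}\bigl(1+o(1)\bigr) \qquad \mbox{as } w \to 0.$$
This is the substantive content of the Kaiser analysis in \cite{K2010}: the hypothesis $\Theta(r) = a_1 r^{d_1} + \bigO(r^{2d_1+1})$ guarantees that $\partial R(\Omega_1)$ approaches $\partial \mathcal{S}_M$ at a summable rate at infinity, so a comparison argument (via Schwarz reflection across the portion of the real boundary where $R(\Omega_1)$ abuts $\R_{\geq 0}$, to upgrade an estimate on $F$ to one on $F'$, together with a normal families / Carathéodory convergence argument for $F$ itself) establishes the asymptotic equivalence of $F$ and $L$. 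Any additive ambiguity in $F$ coming from the two-parameter family of normalizations of $f$ is absorbed into the $o(1)$ because $\log|\lambda| + \log|w| = \log|w|(1+o(1))$ as $w\to 0$.

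Granted the asymptotics for $F$, the two conclusions of the lemma follow by direct manipulation of $R(f(w)) = F(w)$. Solving, $f(w)^{d_1} = (d_1 a_1 F(w))^{-1} = -\pi/(d_1 a_1 \log w)(1+o(1))$; since $\log w = \log|w|(1+o(1))$ as $w\to 0$ in $\mathbb{H}$ and $\arg f(w)\to 0$ (the cusp is tangent to $\R_{\geq 0}$), extracting the principal $d_1$-th root yields (\ref{fnear0}). Differentiating $R(f(w)) = F(w)$, using $R'(z) = -1/(a_1 z^{d_1+1})$, gives $f'(w) = -a_1 f(w)^{d_1+1} F'(w)$; substituting the asymptotic for $F'$ together with the expression for $a_1 f(w)^{d_1}$ extracted from (\ref{fnear0}) produces (\ref{fdernear0}).

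The hard part is the comparison step: showing rigorously that the Riemann map $F$ onto the perturbed near-strip $R(\Omega_1)$ agrees with the model logarithm $L$ to leading order, with its derivative. This is precisely where the analyticity of the boundary and the small perturbation of angles condition are used in an essential way; everything else in the proof is conformal-invariance bookkeeping.
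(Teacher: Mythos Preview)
Your strategy is sound and parallel to the paper's, but packaged differently. You pass to $F = R\circ f$ mapping $\mathbb{H}$ to an asymptotic half-strip and compare with the model $L(w) = -\pi^{-1}\log w$; the paper instead writes $f = g\xi$ with $g(w) = (-\pi/(d_1 a_1 \log w))^{1/d_1}$ and studies the multiplicative correction $\xi\to 1$. Both routes get (\ref{fnear0}) from Kaiser's Theorem~10.

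The real difference is in the derivative step, and here you should be more careful about one point. Your claimed \emph{additive} bound $F(w) = L(w) + o(1)$ is strictly stronger than what (\ref{fnear0}) encodes, which is only $F/L \to 1$; and it is the additive bound that makes a Cauchy/reflection argument on a disk of radius $\asymp |w|$ produce $F' = L'(1+o(1))$. From the multiplicative bound alone, Cauchy would yield only $(F-L)' = o(|w|^{-1}\log|w|)$, which is useless. The paper obtains the needed refinement explicitly: it invokes Kaiser's companion asymptotic $f^{-1}(z) = \exp(-\pi/(d_1 a_1 z^{d_1}))(\ell + o(1))$, substitutes into $f(f^{-1}(z))=z$, and extracts $\xi(w) = 1 + \frac{\log\ell}{d_1\log w} + o(1/\log w)$---equivalently $F(w) = L(w) + \pi^{-1}\log\ell + o(1)$. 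Only then does the Cauchy integral on $B_{|w|/2}(w)$ (after analytic continuation of $\xi$ across $\R$) give $\xi' = o(w^{-1}(\log w)^{-1})$, hence (\ref{fdernear0}). Your sketch skips this bootstrap and attributes the additive bound directly to Kaiser; that is defensible, since taking $\log$ of Kaiser's formula for $f^{-1}$ gives it immediately, but you should say so rather than leave it implicit.

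A minor correction: $\partial R(\Omega_1)$ does not literally contain a segment of $\R_{\geq 0}$ unless one of $\theta_\pm$ vanishes identically, so you cannot reflect across the real axis in the target. What works---and what the paper uses to extend $\xi$---is reflection across $\partial\mathbb{H}=\R$ in the source, which is legitimate because $f$ (hence $F$) maps $\R\setminus\{0\}$ onto analytic arcs.
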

\begin{proof}
The first identity \eqref{fnear0} is a consequence of \cite[Theorem 10]{K2010}. The second identity \eqref{fdernear0} can be proved as follows. By \cite[Theorem 10 and proofs of Theorems 9 and 11]{K2010}, the functions $f:\mathbb{H}\to \Omega_{1}$ and $f^{-1}:\Omega_{1}\to \mathbb{H}$ satisfy
\begin{align}
& f(w) = g(w)\xi(w), & & g(w):=\bigg(-\frac{\pi}{d_1 a_1 \log w}\bigg)^{\frac{1}{d_1}}, & & f^{-1}(z) = \exp \bigg( \frac{-\pi}{d_{1}a_{1}z^{d_{1}}} \bigg)\zeta(z), \label{lol24}
\end{align}
where the principal branch is used for the log and the roots, $\xi:\mathbb{H}\to \mathbb{C}$ is analytic and satisfies $\xi(w) = 1+o(1)$ as $w\to 0$ in $\mathbb{H}$, and $\zeta:\Omega_{1}\to \mathbb{C}$ is analytic and satisfies $\zeta(z)=\ell + o(1)$ as $z\to 0$ in $\Omega_{1}$ for some $\ell \in \mathbb{C}\setminus\{0\}$. Moreover, $\xi$ can be analytically extended to a slightly bigger domain, so that
\begin{align}\label{lol23}
\xi'(w) = \frac{1}{2\pi i}\int_{\partial B_{|w|/2}(w)}\frac{\xi(w')}{(w'-w)^{2}}dw'
\end{align}
holds for all $w\in \mathbb{H}$ that are sufficiently close to $0$. A more precise asymptotic formula for $\xi(w)$ as $w\to 0$ in $\mathbb{H}$ can be obtained as follows: as $z\to 0$ in $\Omega_{1}$,
\begin{align*}
f(f^{-1}(z)) & = z = \bigg(-\frac{\pi}{d_1 a_1 \log \Big[ \exp \big( \frac{-\pi}{d_{1}a_{1}z^{d_{1}}} \big) (\ell + o(1))\Big]}\bigg)^{\frac{1}{d_1}} \xi(f^{-1}(z)) \\
& = z \bigg(\frac{1}{1-\frac{d_{1}a_{1}z^{d_{1}}}{\pi}\log \ell + o(z^{d_{1}})}\bigg)^{\frac{1}{d_1}} \xi(f^{-1}(z)) = z \bigg(1+\frac{a_{1}\log \ell}{\pi}z^{d_{1}}+o(z^{d_{1}})\bigg)\xi(f^{-1}(z)).
\end{align*}
It follows that $\xi(f^{-1}(z)) = 1-\frac{a_{1}\log \ell}{\pi}z^{d_{1}}+o(z^{d_{1}})$ as $z\to 0$ in $\Omega_{1}$, or equivalently,
\begin{align*}
\xi(w) = 1-\frac{a_{1}\log \ell}{\pi}f(w)^{d_{1}}+o(f(w)^{d_{1}}) = 1+\frac{\log \ell}{d_1 \log w}+o\Big(\frac{1}{\log w}\Big) \qquad \mbox{as } \mathbb{H}\ni w \to 0.
\end{align*}
One can also rewrite the above as
\begin{align*}
\xi(w) = 1+\frac{\log \ell}{d_1 \log_{-\pi/2}(w)}+o\Big(\frac{1}{\log w}\Big) \qquad \mbox{as } \mathbb{H}\ni w \to 0,
\end{align*}
where $\log_{-\pi/2}(w) := \log |w| + i \arg_{-\pi/2}(w)$ and $\arg_{-\pi/2}(w) \in (-\frac{\pi}{2},\frac{3\pi}{2})$. Since $1+\frac{\log \ell}{d_1 \log_{-\pi/2}(w)}$ is analytic in $B_{|w|/2}(w)$ for all $w\in \mathbb{H}$, by combining the above with \eqref{lol23}, we infer that $\xi'(w) = o(w^{-1}(\log w)^{-1})$ as $\mathbb{H}\ni w\to 0$. 
Since $g'(w) = g(w) \frac{-1}{d_{1}w\log w}$, by \eqref{lol24} we have
\begin{align*}
f'(w) = g'(w)\xi(w) + g(w)\xi'(w) = f(w)\frac{-1}{d_{1}w\log w}(1+o(1)) \qquad \mbox{as } \mathbb{H}\ni w \to 0,
\end{align*}
from which \eqref{fdernear0} follows.
\end{proof}

In view of Lemma \ref{varphilemma}, the next lemma completes the proof of Theorem \ref{mainth}.

\begin{lemma}
For any fixed $\epsilon >0$, the estimate (\ref{intvarphiU}) holds for all sufficiently small $r > 0$. 
\end{lemma}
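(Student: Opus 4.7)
The plan is to pull the integral back to the upper half-plane $\mathbb{H}$ via the conformal map $f$ of Lemma \ref{Kaiserlemma}, then exploit the super-polynomial contraction caused by the cusp to show that the harmonic measure is nearly $1$ on the relevant part of the domain. Since $\varphi(z) = z^p$ and $U$ has order of tangency $d$ and coefficient $a$, a direct computation using $\Theta_{\varphi(U)}(R) = p\,\Theta_U(R^{1/p})$ and the ``small perturbation'' condition shows that the analytic cusp at $0$ in $\varphi(U)$ has order of tangency $d_1 = d/p$ and coefficient $a_1 = ap$. By conformal invariance,
\begin{align*}
I_r = \int_{\mathbb{H}} \omega(w, E_r, \mathbb{H})\,|f(w)|^{2b/p - 2}\,|f'(w)|^2\,d^2w, \qquad E_r := f^{-1}\big(\partial\varphi(U) \cap B_r(0)\big) \subset \mathbb{R}.
\end{align*}

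The key step is to restrict the integral to the image of $\varphi(U) \cap B_{\rho_r}(0)$, where $\rho_r := (1-\delta)r$ for a small $\delta > 0$ to be chosen. Applying Lemma \ref{Kaiserlemma} with $\alpha$ slightly less than $1$ and slightly greater than $1$, I would deduce the following for all sufficiently small $r$: on the one hand $E_r \supset (-\eta_r, \eta_r)$ with $\eta_r := \exp\!\big(-\pi(1+\eta)^{d_1}/(d_1 a_1 r^{d_1})\big)$; on the other hand $|f^{-1}(z)| \leq \tilde{\eta}_{\rho_r} := \exp\!\big(-\pi(1-\eta)^{d_1}/(d_1 a_1 \rho_r^{d_1})\big)$ for every $z \in \varphi(U) \cap B_{\rho_r}(0)$. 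For $\eta$ sufficiently small relative to $\delta$, the exponent in $\tilde{\eta}_{\rho_r}/\eta_r$ becomes a negative constant times $r^{-d_1}$, so this ratio decays super-polynomially. Inserting $w = x + iy$ with $|w| \leq \tilde\eta_{\rho_r}$ into the Poisson formula
\begin{align*}
\omega(w, (-\eta_r, \eta_r), \mathbb{H}) = \tfrac{1}{\pi}\big[\arctan((\eta_r - x)/y) + \arctan((\eta_r + x)/y)\big]
\end{align*}
and using $\arctan X \geq \pi/2 - 1/X$ for $X > 0$ gives $\omega(w, E_r, \mathbb{H}) \geq 1 - 4\tilde\eta_{\rho_r}/(\pi \eta_r) = 1 - o(1)$ uniformly for such $w$.

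Transferring back to $\varphi(U)$ and combining with the previous estimate yields
\begin{align*}
I_r \geq (1 - o(1)) \int_{\varphi(U) \cap B_{\rho_r}(0)} |z|^{2b/p - 2}\,d^2z.
\end{align*}
The remaining integral is then computed in polar coordinates using $\Theta_{\varphi(U)}(\rho) = a_1 \rho^{d_1}(1 + O(\rho^{d_1+1}))$, giving $\frac{a_1 p}{2b + p d_1}\rho_r^{(2b+pd_1)/p}(1+o(1))$. Substituting $d_1 = d/p$, $a_1 = ap$, and $\rho_r = (1-\delta)r$ turns this into $\frac{a p^2}{2b + d}\,(1-\delta)^{(2b+d)/p}\, r^{(2b+d)/p}(1+o(1))$, and choosing $\delta$ small enough that $(1-\delta)^{(2b+d)/p}(1-o(1)) \geq (1-\epsilon)$ produces the required lower bound $c_0 p^2 r^{(2b+d)/p}$.

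The main obstacle is the simultaneous control required in choosing $\rho_r$: it must be close enough to $r$ for $\int_{\varphi(U) \cap B_{\rho_r}(0)} |z|^{2b/p-2} d^2 z$ to capture almost all of the mass $\sim r^{(2b+d)/p}$, while also being separated enough from $r$ so that the ratio $\tilde\eta_{\rho_r}/\eta_r$ tends to $0$. The saving grace, which makes cusps qualitatively different from corners, is the double-exponential-like relationship $|w| \sim \exp(-c/|f(w)|^{d_1})$ from Lemma \ref{Kaiserlemma}: even a multiplicative gap $\rho_r/r = 1-\delta$ with $\delta$ small translates into the exponentially large gap $\eta_r/\tilde\eta_{\rho_r} \geq \exp(c' r^{-d_1})$, which is what drives the harmonic measure to $1$.
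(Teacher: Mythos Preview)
Your proposal is correct and close in spirit to the paper's argument, but differs in how the final integration is carried out. Both approaches pull back via the conformal map $f$ of Lemma~\ref{Kaiserlemma}, identify $d_1=d/p$ and $a_1=ap$, and show that the harmonic measure of $E_r$ is close to $1$ on a region near the origin of $\mathbb{H}$. The paper then uses \emph{both} asymptotics \eqref{fnear0} and \eqref{fdernear0} (for $|f|$ and $|f'|$) to rewrite the integral entirely in $\mathbb{H}$-coordinates as an explicit integrand $\asymp |w|^{-2}|\log|w||^{-2b/d-2}$, which it evaluates in closed form over a small disk. You instead use only the asymptotic for $|f|$ (equivalently $|f^{-1}|$) to show that $\omega(\,\cdot\,,E_r,\mathbb{H})\geq 1-o(1)$ on $f^{-1}(\varphi(U)\cap B_{\rho_r}(0))$, then transfer back and compute $\int_{\varphi(U)\cap B_{\rho_r}(0)}|z|^{2b/p-2}\,d^2z$ directly in $\varphi(U)$ via the expansion of $\Theta_{\varphi(U)}$. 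Your route is slightly more elementary since it never needs the derivative estimate \eqref{fdernear0}, and it mirrors the structure of the upper-bound proof more closely; the paper's route has the advantage that the integral in $\mathbb{H}$ is fully explicit and does not require a separate choice of cutoff radius $\rho_r$. One minor remark: your reference to ``$\alpha$ slightly less than $1$ and slightly greater than $1$'' does not match the paper's formulation of Lemma~\ref{Kaiserlemma}, but the intended meaning---two-sided bounds extracted from the $(1+o(1))$ factor---is clear.
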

\begin{proof}
Let $a > 0$ be such that $\Theta(r) = ar^d(1 + \bigO(r^{d+p}))$ as $r \to 0$. Let $r\Theta_1(r)$ be the length of $\varphi(U) \cap \partial B_r(0)$. Then
$\Theta_1(r) = p \Theta(r^{\frac{1}{p}}) = pa r^{\frac{d}{p}} (1 + \bigO(r^{\frac{d}{p}+1}))$ as $r \to 0$, so the cusp of $\varphi(U)$ at $0$ has order of tangency $d_1 := d/p$ and coefficient of tangency $a_1 := pa$. 

Let $f:\mathbb{H} \to \varphi(U)$ be a conformal map with $f(0) = 0$. By Lemma \ref{Kaiserlemma}, $f$ satisfies (\ref{fnear0}) and \eqref{fdernear0}. It follows that $f$ obeys the following two estimates: for any fixed $\epsilon >0$,
\begin{align}\label{ffprimebounds}
|f(w)| \geq (1-\epsilon)\bigg(\frac{\frac{\pi}{d_{1}a_{1}}}{|\log|w||}\bigg)^{\frac{1}{d_{1}}} 
\quad \text{and} \quad |f'(w)| \geq  \frac{(\frac{\pi}{d_{1}a_{1}})^{\frac{1}{d_{1}}}}{d_{1}|w|} \frac{1-\epsilon}{|\log|w||^{1 + \frac{1}{d_1}}} 
\end{align}
for all small enough $w$. 

Performing the change of variables $z = f(w)$, the integral $I_r$ defined in (\ref{intvarphiU}) can be written as
$$I_r = \int_{\mathbb{H}} \omega(f(w), \partial \varphi(U) \cap B_r(0), \varphi(U)) |f(w)|^{2\frac{b}{p}-2} |f'(w)|^2 d^{2}w.
$$
Using the conformal invariance of harmonic measure, the estimates (\ref{ffprimebounds}), and the relations $d_1 p = d$ and $a_{1}=pa$, we deduce that
\begin{align}\label{Irgeq}
I_r & \geq  (1-\epsilon)^{\frac{2b}{p}}\bigg(  \frac{\pi}{d a} \bigg)^{\frac{2b}{p}} \frac{p^{2}}{d^{2}} \int_{\mathbb{H}} \omega(w, f^{-1}(\partial \varphi(U) \cap B_r(0)), \mathbb{H}) \frac{1}{|\log|w||^{\frac{2b}{d} + 2}}
 \frac{1}{|w|^2}
 d^{2}w.
\end{align}

Solving (\ref{fnear0}) for $w = f^{-1}(z)$, we find
$$f^{-1}(z) = e^{-\frac{\pi}{a_1 d_1 z^{d_1}} (1+o(1))}
= e^{-\frac{\pi}{a d z^{d/p}} (1+o(1))} \qquad \text{as $z \in \varphi(\Omega)$ tends to $0$},$$
and so
\begin{align}\label{finverseinterval}
f^{-1}(\partial \varphi(U) \cap B_r(0)) \supset \bigg[-e^{-\frac{(1+\epsilon)\pi}{a d r^{d/p}}}, e^{-\frac{(1+\epsilon)\pi}{a d r^{d/p}}}\bigg]
\end{align}
for all sufficiently small $r > 0$.
If $[a_{0},b_{0}]$ is an interval, then $\omega(w, [a_{0},b_{0}], \mathbb{H})$ is the angle (divided by $\pi$) of the interval $[a_{0},b_{0}]$ as seen from $w$, i.e.,
$$\omega(w, [a_{0},b_{0}], \mathbb{H})
= \frac{1}{\pi}\bigg(\arctan\bigg(\frac{\re(w) - a_{0}}{\im w}\bigg) - \arctan\bigg(\frac{\re(w) - b_{0}}{\im w}\bigg)\bigg).
$$
Let $w = \rho e^{i\theta}$. If $[-u,u]$ is a symmetric interval around $0$, then for all $\theta \in (0, \pi)$ and for all $\rho \in (0,u)$, we have
\begin{align}\label{omegawuuH}
\omega(w, [-u,u], \mathbb{H})
\geq \omega(i \rho, [-u,u], \mathbb{H})
= \frac{2}{\pi} \arctan\bigg(\frac{u}{\rho}\bigg).
\end{align}
For $\rho \in (0, e^{-\frac{(1+2\epsilon)\pi}{a d r^{d/p}}})$ and all sufficiently small $r>0$, we have 
\begin{align*}
\frac{2}{\pi}\arctan\bigg(\frac{e^{-\frac{(1+\epsilon)\pi}{a d r^{d/p}}}}{\rho}\bigg) \geq 1-\epsilon.
\end{align*}
We conclude from (\ref{Irgeq}), (\ref{finverseinterval}), and (\ref{omegawuuH}) with $u = e^{-\frac{(1+\epsilon)\pi}{a d r^{d/p}}}$ that
\begin{align*}
I_r & \geq (1-\epsilon)^{\frac{2b}{p}+1}\bigg(  \frac{\pi}{d a} \bigg)^{\frac{2b}{p}} \frac{p^{2}}{d^{2}} \int_{0}^{\pi} \int_0^{e^{-\frac{(1+2\epsilon)\pi}{a d r^{d/p}}}} \frac{1}{|\log \rho|^{\frac{2b}{d} + 2} \rho^2}  \rho d\rho d\theta \\
& = (1-\epsilon)^{\frac{2b}{p}+1}\bigg(  \frac{\pi}{d a} \bigg)^{\frac{2b}{p}} \frac{p^{2}}{d^{2}} \pi  \int_0^{e^{-\frac{(1+2\epsilon)\pi}{a d r^{d/p}}}}  \frac{1}{|\log \rho|^{\frac{2b}{d} + 2} \rho}   d\rho.
\end{align*}
Since
\begin{align*}
\int_0^{e^{-\frac{(1+2\epsilon)\pi}{a d r^{d/p}}}}  \frac{(\log \frac{1}{\rho})^{-\frac{2b}{d} -2} }{\rho} d\rho
= \frac{(\log\frac{1}{\rho})^{-\frac{2b}{d}-1}}{1+\frac{2b}{d}}\bigg|_0^{e^{-\frac{(1+2\epsilon)\pi}{ a d r^{d/p}}}}
= \frac{(\frac{(1+2\epsilon)\pi}{a d r^{d/p}})^{-\frac{2b}{d}-1}}{1+\frac{2b}{d}},
\end{align*}
we obtain
\begin{align*}
I_{r} \geq \frac{(1-\epsilon)^{\frac{2b}{p}+1}}{(1+2\epsilon)^{\frac{2b}{d}+1}}p^{2} \frac{a}{2b+d} r^{\frac{2b+d}{p}},
\end{align*}
which is the desired estimate (since $\epsilon >0$ is arbitrary).
\end{proof}

\section{Multiple corners and cusps: proof of Theorem \ref{mainth2}}\label{proofsec2}
Suppose $\Omega$ is an open subset of $\C^*$ satisfying $(i)$ and $(ii)$ of Theorem \ref{mainth2} with $z_0 = 0$. Shrinking $\rho_0 > 0$ if necessary, we may assume that the conclusions of Lemma \ref{omegaupperboundlemma} hold.
Let $\alpha := \max_{1\leq j \leq m_1} \alpha_j$ and $d := \min_{m_1 +1\leq j \leq m} d_j$. Then $\pi \alpha$ is the largest opening of the corners at $0$ and $d$ is the smallest order of tangency of the cusps at $0$. 

Suppose $\mu$ is a non-negative measure of finite total mass on $\Omega$ satisfying $d\mu(z) =(1+o(1)) |z-z_{0}|^{2b-2}d^{2}z$ as $z\to 0$. 
Let $\nu := \mathrm{Bal}(\mu,\partial \Omega)$ and let $\nu_j := \mathrm{Bal}(\mu|_{U_j},\partial U_j)$ be the balayage of the restriction of $\mu$ to the component $U_j$ of $\Omega \cap B_{\rho_0}(z_0)$.

The leading behavior of $\nu(\partial \Omega \cap B_r(0))$ as $r \to 0$ receives contributions from the corners and cusps at $0$. The next lemma shows that the contributions of the corners and cusps decouple and can be computed locally up to terms of order $\bigO(r^{1/\alpha})$. Furthermore, if there are no corners at $0$, the cusps decouple up to exponentially small terms.
The reason behind the decoupling is that it is unlikely that a Brownian motion starting from a point in $U_j$ first leaves $U_j$, then enters another component $U_i$ with $i \neq j$, and then makes it all the way to $\partial U_i \cap B_r(0)$ without leaving $\Omega$. 

In this section, we use $C>0$ and $c>0$ to denote generic strictly positive constants.

\begin{lemma}[Decoupling and localization]\label{decouplinglemma}
If $\Omega$ has at least one corner at $0$ (i.e., if $m_1 \geq 1$), then 
\begin{align}\label{decouplingestimatecorner}
\nu(\partial \Omega \cap B_r(0)) - C r^{\frac{1}{\alpha}} \leq \sum_{j=1}^m \nu_j(\partial U_j \cap B_r(0)) \leq \nu(\partial \Omega \cap B_r(0))
\end{align}
for all sufficiently small $r > 0$.
If $\Omega$ has only cusps at $0$ (i.e., if $m_1 =0$), then 
\begin{align}\label{decouplingestimatecusp}
\nu(\partial \Omega \cap B_r(0)) - C e^{-c r^{-d}} \leq \sum_{j=1}^m \nu_j(\partial U_j \cap B_r(0)) \leq \nu(\partial \Omega \cap B_r(0))
\end{align}
for all sufficiently small $r > 0$.
\end{lemma}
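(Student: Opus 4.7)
The plan is to prove the upper and lower bounds in (\ref{decouplingestimatecorner}) and (\ref{decouplingestimatecusp}) separately: the upper bound will follow from the maximum principle via a straightforward comparison of harmonic measures on $U_j$ and on $\Omega$, while the lower bound will be obtained from a Markov-type decomposition at the first exit of $U_j$, with the cross-component coupling controlled by the far-point harmonic measure estimate supplied by Lemma~\ref{omegaupperboundlemma}. Throughout, I write $h_1^{(j)}(z):=\omega(z,\partial U_j\cap B_r(0),U_j)$ on $U_j$ and $h_2(z):=\omega(z,\partial\Omega\cap B_r(0),\Omega)$ on $\Omega$, so that $\nu_j(\partial U_j\cap B_r(0))=\int_{U_j}h_1^{(j)}\,d\mu$ and $\nu(\partial\Omega\cap B_r(0))=\int_\Omega h_2\,d\mu$.

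For the upper bound, I fix $r\in(0,\rho_0)$ and observe that $B_r(0)\cap\partial B_{\rho_0}(0)=\emptyset$, so $\partial U_j\cap B_r(0)\subset\partial\Omega\cap B_r(0)$. Comparing the boundary values of $h_1^{(j)}$ and $h_2$ on $\partial U_j$ then shows $h_1^{(j)}\le h_2$ at every boundary point: both equal $1$ on $\partial U_j\cap B_r(0)$; both vanish on $(\partial U_j\cap\partial\Omega)\setminus B_r(0)$; and $h_1^{(j)}=0\le h_2$ on $\partial U_j\cap\Omega\subset\partial B_{\rho_0}(0)$. The maximum principle gives $h_1^{(j)}\le h_2$ on $U_j$. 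Integrating against $\mu$ and summing over $j$, using that the sets $U_j$ are pairwise disjoint subsets of $\Omega$, yields the right-hand inequalities of (\ref{decouplingestimatecorner}) and (\ref{decouplingestimatecusp}).

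The lower bound rests on the decomposition
\begin{align*}
h_2(z)=h_1^{(j)}(z)+\int_{\partial U_j\cap\Omega}h_2(w)\,d\omega(z,\cdot,U_j)(w),\qquad z\in U_j,
\end{align*}
obtained by expressing the harmonic function $h_2$ on $U_j$ as the integral of its own boundary values against the harmonic measure of $U_j$: on $(\partial U_j\cap\partial\Omega)\cap B_r(0)=\partial U_j\cap B_r(0)$ the boundary values of $h_2$ equal $1$ (reproducing $h_1^{(j)}$), they vanish on $(\partial U_j\cap\partial\Omega)\setminus B_r(0)$, and the remainder is integrated over $\partial U_j\cap\Omega\subset\{|w|=\rho_0\}$. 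This remainder is therefore at most $\sup_{|w|=\rho_0,\,w\in\Omega}h_2(w)$. Lemma~\ref{omegaupperboundlemma} provides exactly the decay rates claimed: when at least one corner is present, the slowest-decaying corner dominates and gives a polynomial bound of order $r^{1/\alpha}$ with $\alpha=\max_j\alpha_j$; in the pure cusp case every contribution is exponentially small, the worst being of order $e^{-cr^{-d}}$ with $d=\min_j d_j$.

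To finish, I would integrate $h_2(z)\le h_1^{(j)}(z)+\varepsilon_r$ against $d\mu$ on $U_j$, sum over $j$, and add the separate estimate $\int_{\Omega\setminus B_{\rho_0}(0)}h_2\,d\mu\le\varepsilon_r\,\mu(\Omega\setminus B_{\rho_0}(0))$ for the region outside $B_{\rho_0}(0)$; here $\varepsilon_r=Cr^{1/\alpha}$ in the corner case and $\varepsilon_r=Ce^{-cr^{-d}}$ in the pure cusp case. Because $\mu$ has finite total mass, all resulting prefactors can be absorbed into a single constant, giving the left-hand inequalities of (\ref{decouplingestimatecorner}) and (\ref{decouplingestimatecusp}). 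The main (and essentially only) nontrivial step in this plan is invoking Lemma~\ref{omegaupperboundlemma} with the correct uniform constants on the circle $\{|w|=\rho_0\}$; once this far-point bound is quoted, the decoupling argument reduces to the standard maximum principle and strong Markov decomposition sketched above.
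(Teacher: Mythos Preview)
Your proposal is correct and follows essentially the same approach as the paper: the paper's own proof simply refers to \cite[Lemma 6.1]{CL Corner} and notes that the argument goes through verbatim once one uses the far-point harmonic measure bounds of Lemma~\ref{omegaupperboundlemma}, which is precisely the maximum-principle/harmonic-measure decomposition you have written out in detail. Your explicit splitting $h_2(z)=h_1^{(j)}(z)+\int_{\partial U_j\cap\Omega}h_2\,d\omega(z,\cdot,U_j)$ on each $U_j$, together with the uniform bound $\sup_{|w|=\rho_0}h_2(w)\le\varepsilon_r$ from Lemma~\ref{omegaupperboundlemma} and the finite-mass estimate on $\Omega\setminus B_{\rho_0}(0)$, is exactly the mechanism the paper has in mind.
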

\begin{proof}
In the absence of cusps, the estimate (\ref{decouplingestimatecorner}) was proved in \cite[Lemma 6.1]{CL Corner}. The proof relies on the estimate (\ref{omegaleq8pirzmultiple2}) of Lemma \ref{omegaupperboundlemma} and the same arguments yield (\ref{decouplingestimatecorner}) also in the presence of a finite number of cusps. 

The proof of (\ref{decouplingestimatecusp}) follows the same steps, except that if $U_j$ has a cusp, then (\ref{omegaleq8pirzmultiple2}) implies that $\omega(z, \partial U_j \cap B_{r}(0), \Omega) \leq C e^{-c r^{-d_j}}$ for all $0<r<\rho_{0}\leq |z|$, $z\in \Omega$. (If $U_j$ has a corner, then  it instead holds that $\omega(z, \partial U_j \cap B_{r}(0), \Omega) \leq C r^{1/\alpha_j}$ for all $0<r<\rho_{0}\leq |z|$, $z\in \Omega$.) Since $e^{-c r^{-d_j}} \leq e^{-c r^{-d}}$ for all $j = m_1 + 1, \dots, m$, the desired conclusion follows.  
\end{proof}

If $U_j$ has a cusp at $0$, then we can estimate $\nu_j(\partial U_j \cap B_r(0))$ by applying Theorem \ref{mainth} to the domain $U_j$. This yields
\begin{align}\label{nujcusp}
(1-\epsilon)\frac{a_{j}}{2b+d_{j}} r^{2b+d_{j}} \leq \nu_j(\partial U_j \cap B_r(0)) \leq (1+\epsilon)\frac{a_{j}}{2b+d_{j}} r^{2b+d_{j}} \qquad \text{for $j = m_1 + 1, \dots, m$}
\end{align}
and all sufficiently small $r>0$. On the other hand, if $U_j$ has a corner at $0$, then \cite[Theorem 2.1]{CL Corner} implies that
\begin{align}
\nonumber
(1-\epsilon) \frac{\tan(\pi \alpha_j b)}{2b^2} r^{2b} & \leq \nu_j(\partial U_j \cap B_r(0)) \leq (1+\epsilon) \frac{\pi \alpha_j}{2b} \bigg(1  + \frac{16 b}{\pi (\frac{1}{\alpha_j} - 2b)}\bigg)  r^{2b} && \text{if } 2b < \frac{1}{\alpha_{j}}, \nonumber 
	\\ \nonumber 
(1-\epsilon) \frac{2}{\pi b} r^{2b} \log(\tfrac{1}{r})
& \leq
 \nu_j(\partial U_j \cap B_r(0))
\leq (1+\epsilon) \frac{4}{b} r^{2b} \log(\tfrac{1}{r}) & & \text{if } 2b= \frac{1}{\alpha_{j}}, 	\\ \label{nujcorner}
c r^{\frac{1}{\alpha_{j}}} & \leq
 \nu_j(\partial U_j \cap B_r(0))
\leq C r^{\frac{1}{\alpha_{j}}} & & \hspace{-2.2cm} \text{if } 2b > \frac{1}{\alpha_{j}}.
\end{align}

Summing (\ref{nujcorner}) over $j$ from $1$ to $m_1$, and (\ref{nujcusp}) over $j$ from $m_1 + 1$ to $m$, we obtain, for all sufficiently small $r>0$,
\begin{align*}
(1-\epsilon)\frac{\sum_{j \in \mathcal{D}}a_{j}}{2b+d} r^{2b+d} & \leq \sum_{j=1}^m \nu_j(\partial U_j \cap B_r(0)) \leq (1+\epsilon)\frac{\sum_{j \in \mathcal{D}}a_{j}}{2b+d} r^{2b+d} & & \hspace{-1cm}  \mbox{if } m_{1}=0, \nonumber \\
\nonumber
(1-\epsilon) \sum_{j=1}^{m_{1}} \frac{\tan(\pi \alpha_j b)}{2b^2} r^{2b} & \leq \sum_{j=1}^m \nu_j(\partial U_j \cap B_r(0)) \leq (1+\epsilon) \sum_{j=1}^{m_{1}} \frac{\pi \alpha_j}{2b} \bigg(1  + \frac{16 b}{\pi (\frac{1}{\alpha_j} - 2b)}\bigg)  r^{2b} \\
&  && \hspace{-2.2cm} \text{if $m_{1}\geq 1$ and $2b < \frac{1}{\alpha}$}, \nonumber 
	\\ \nonumber 
m_{\alpha} (1-\epsilon) \frac{2}{\pi b} r^{2b} \log(\tfrac{1}{r})
& \leq
 \sum_{j=1}^m \nu_j(\partial U_j \cap B_r(0))
\leq m_{\alpha} (1+\epsilon) \frac{4}{b} r^{2b} \log(\tfrac{1}{r}) & & \hspace{-2.2cm} \text{if } m_{1}\geq 1 \mbox{ and } 2b= \frac{1}{\alpha}, 	\\
c r^{\frac{1}{\alpha}} & \leq
 \sum_{j=1}^m \nu_j(\partial U_j \cap B_r(0))
\leq C r^{\frac{1}{\alpha}} & & \hspace{-2.2cm} \text{if } m_{1}\geq 1 \mbox{ and } 2b > \frac{1}{\alpha}.
\end{align*}
Combining these inequalities with Lemma \ref{decouplinglemma}, the asymptotic estimate \eqref{numultiplebounds} follows. The estimate \eqref{nuestimate2} is a direct consequence of \eqref{numultiplebounds} (see also \cite[end of Section 5]{CL Corner}). This completes the proof of Theorem \ref{mainth2}.

\section{Balayage of the uniform measure: proof of Theorem \ref{thm:EG tacnode nu}}\label{uniformsec}

Let $c>a>0$ and let $\Omega_{a,c}$ be the tacnocal region defined in (\ref{Omegaacdef}). Let $\mu_{b}$ be the measure defined by $d\mu_{b}(z) = \frac{b^{2}}{\pi}|z|^{2b-2}d^{2}z$. Even though our final result (Theorem \ref{thm:EG tacnode nu}) only applies to the uniform measure (corresponding to $b=1$), we first consider the more general case $b>0$.

The balayage measure $\nu := \mathrm{Bal}(\mu_{b},\partial \Omega_{a,c})$ can be expressed in terms of the harmonic measure as in \eqref{nuharmonicmeasure}. Equivalently, using e.g. \cite[Theorem 4.6]{C2023} (see also \cite[Corollary II.2.6]{GM2005} and \cite[Theorem II.4.11]{SaTo}), $\nu$ can also be expressed in terms of the Green function $g_{\Omega_{a,c}}$ of $\Omega_{a,c}$ as follows:
\begin{align}\label{general formula for dnu in terms of green function}
d\nu(z) =  \bigg( \int_{\Omega_{a,c}} \frac{\partial g_{\Omega_{a,c}}(w,z)}{\partial \mathbf{n}_{z}}d\mu_{b}(w) \bigg)|dz| \qquad \mbox{for all } z\in (\partial \Omega_{a,c})\setminus \{0\},
\end{align}
where $\mathbf{n}_{z}$ is the normal at $z$ pointing into $\Omega_{a,c}$. Recall that the Green function $g_{\mathbb{H}}$ of the upper half-plane is given by
\begin{align}\label{gCplus}
g_{\mathbb{H}}(z,w) = \frac{1}{2\pi} \log \bigg| \frac{z-\overline{w}}{z-w} \bigg|.
\end{align}
The function $\varphi_{1}(z) = i \frac{c}{c-a}\frac{z-2ai}{z}$ maps $\Omega_{a,c}$ to the strip $\{z:\im z \in (0,1)\}$. Hence 
\begin{align}\label{mapping general d to half plane}
\varphi(z) := e^{\pi \varphi_{1}(z)} = \exp \bigg( \pi i \frac{c}{c-a}\frac{z-2ai}{z} \bigg),
\end{align}
is a conformal map from $\Omega_{a,c}$ onto the upper half-plane $\mathbb{H} =\{z:\im z >0\}$, see Figure \ref{fig:conformal map from curvi triangle to upper half plane}. 
\begin{figure}
\begin{center}
\begin{tikzpicture}[master]
\node at (0,0) {\includegraphics[height=4.5cm]{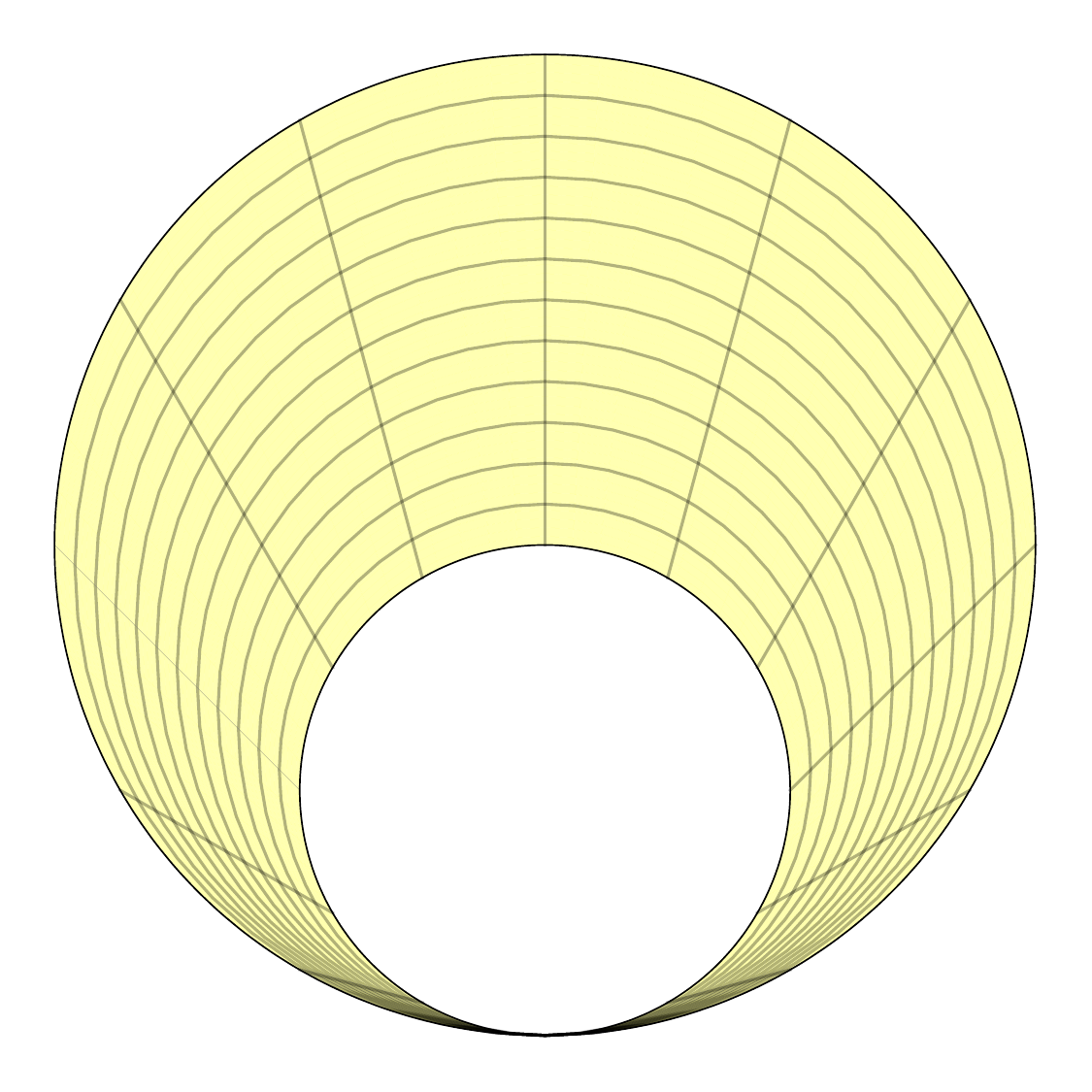}};
\draw[fill] (0,-2.02) circle (0.04);
\node at (0,-2.22) {\footnotesize $0$};
\draw[fill] (0,-1.27) circle (0.04);
\node at (0,-1.47) {\footnotesize $ia$};
\draw[fill] (0,0) circle (0.04);
\node at (0,-0.2) {\footnotesize $ic$};
\end{tikzpicture} \hspace{2.5cm}
\begin{tikzpicture}[slave]
\node at (0,0) {\includegraphics[height=3cm]{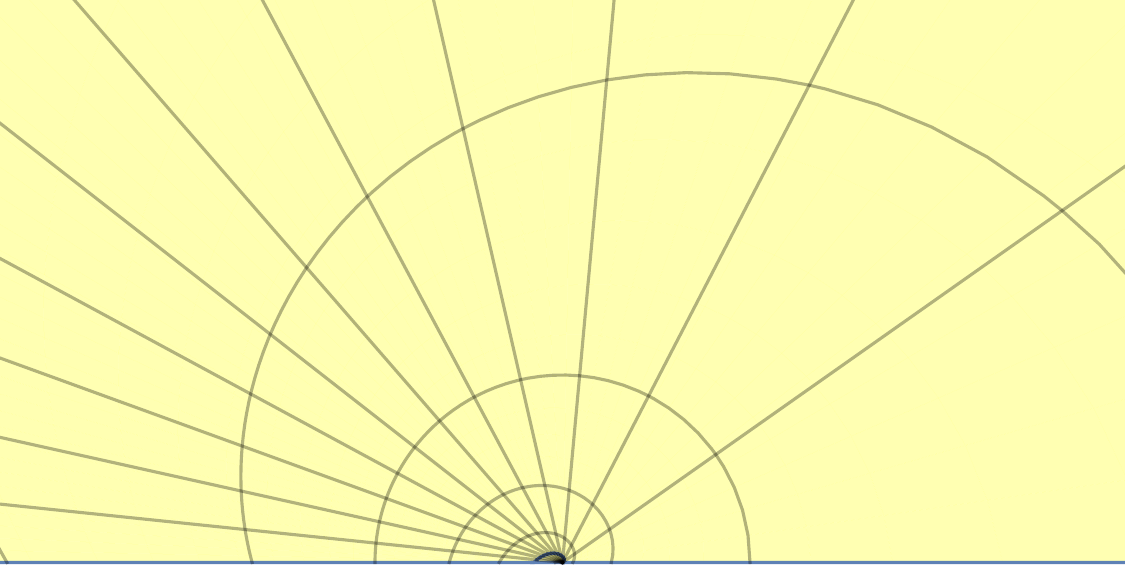}};
\node at (-4.2,1.4) {$\varphi$};
\coordinate (P) at ($(-4.2, 0.5) + (30:1cm and 0.6cm)$);
\draw[thick, black, -<-=0.45] ($(-4.2, 0.5) + (30:1cm and 0.6cm)$(P) arc (30:150:1cm and 0.6cm);
\end{tikzpicture}
\end{center}
\caption{\label{fig:conformal map from curvi triangle to upper half plane} The conformal map $\varphi$ from $\Omega_{a,c}$ onto the upper half-plane, with $a=0.45$ and $c=1.2$.}
\end{figure}
Using (\ref{gCplus}) and \eqref{mapping general d to half plane}, we obtain
\begin{align*}
g_{\Omega_{a,c}}(z,w) = \frac{1}{2\pi} \log \bigg| \frac{\varphi(z)-\overline{\varphi(w)}}{\varphi(z)-\varphi(w)} \bigg|.
\end{align*}
Note that $\Omega_{a,c}$ can be parametrized as
\begin{align}\label{param of U}
\Omega_{a,c}=\{ai+re^{i\theta}: \theta \in (-\tfrac{\pi}{2},\tfrac{3\pi}{2}), r\in (a,R_{e}(\theta))\}
\end{align}
where 
\begin{align}\label{def of Re}
R_{e}(\theta) = (c-a)\sin \theta + \sqrt{a(2c-a)(\cos \theta)^{2} + c^{2} (\sin \theta)^{2}}.
\end{align}
For $w=ai+xe^{i\alpha}$, $\alpha \in (-\tfrac{\pi}{2},\tfrac{3\pi}{2})$, $x\in (a,R_{e}(\theta))$, we have $d^{2}w = xdxd\alpha$, and thus
\begin{align*}
d\mu_{b}(ai+xe^{i\alpha}) = \frac{b^{2}}{\pi}|ai+xe^{i\alpha}|^{2b-2} xdxd\alpha.
\end{align*}
Using \eqref{general formula for dnu in terms of green function}, we obtain, for $z = ai+ae^{i\theta}$ and $\theta \in (-\frac{\pi}{2},\frac{3\pi}{2})$,
\begin{align*}
 \frac{d\nu(z)}{|dz|} & = \int_{-\frac{\pi}{2}}^{\frac{3\pi}{2}}  \int_{a}^{R_{e}(\alpha)}  \bigg[\frac{d}{dr} g_{\Omega_{a,c}}(ai+x  e^{i\alpha},ai+re^{i\theta}) \bigg]_{r=a} d\mu_{b}(ai+xe^{i\alpha}) \nonumber 
 	\\
& = \re \frac{2ce^{i\theta}}{a(c-a)(i+e^{i\theta})^{2}} \int_{-\frac{\pi}{2}}^{\frac{3\pi}{2}}  \int_{a}^{R_{e}(\alpha)}  \frac{1}{1-e^{\frac{2ic\pi}{c-a}\frac{e^{i\alpha}x-ae^{i\theta}}{(i+e^{i\theta})(a-ie^{i\alpha}x)}}} \frac{b^{2}}{\pi}|ai+xe^{i\alpha}|^{2b-2}xdxd\alpha. 
\end{align*}
From \eqref{param of U}, we conclude that $w\in \Omega_{a,c}$ whenever $w=ai+xe^{i\alpha}$ for some $\alpha \in (-\frac{\pi}{2},\frac{3\pi}{2})$ and $x\in (a,R_{e}(\alpha))$. Hence, the above can be rewritten as
\begin{align}\label{lol1}
& \frac{d\nu(z)}{|dz|} = \re \frac{2ce^{i\theta}}{a(c-a)(i+e^{i\theta})^{2}} \int_{\Omega_{a,c}}  \frac{1}{1-e^{\frac{2c\pi}{c-a}\frac{ai+ae^{i\theta}-w}{(i+e^{i\theta})w}}} \frac{b^{2}}{\pi} w^{b-1}\overline{w}^{b-1}d^{2}w, 
\end{align}
where the principal branch of the root is used if $b\notin \N_{>0}$.

The denominator of the integrand vanishes if and only if $w=p_{k}$ for some $k\in \Z$, where $p_{k}=p_{k}(\theta)$ is given by \eqref{def of pk and qk}. A direct analysis shows that either $|p_{k}-ai|\leq a$ or $|p_{k}-ci|\geq c$ holds, so that $p_{k}\notin \Omega_{a,c}$, see also Figure \ref{fig:poles} (left). Note also that $|p_{0}-ai|= a$.

\begin{figure}[h]
\begin{center}
\begin{tikzpicture}[master]
\node at (0,0) {\includegraphics[height=6cm]{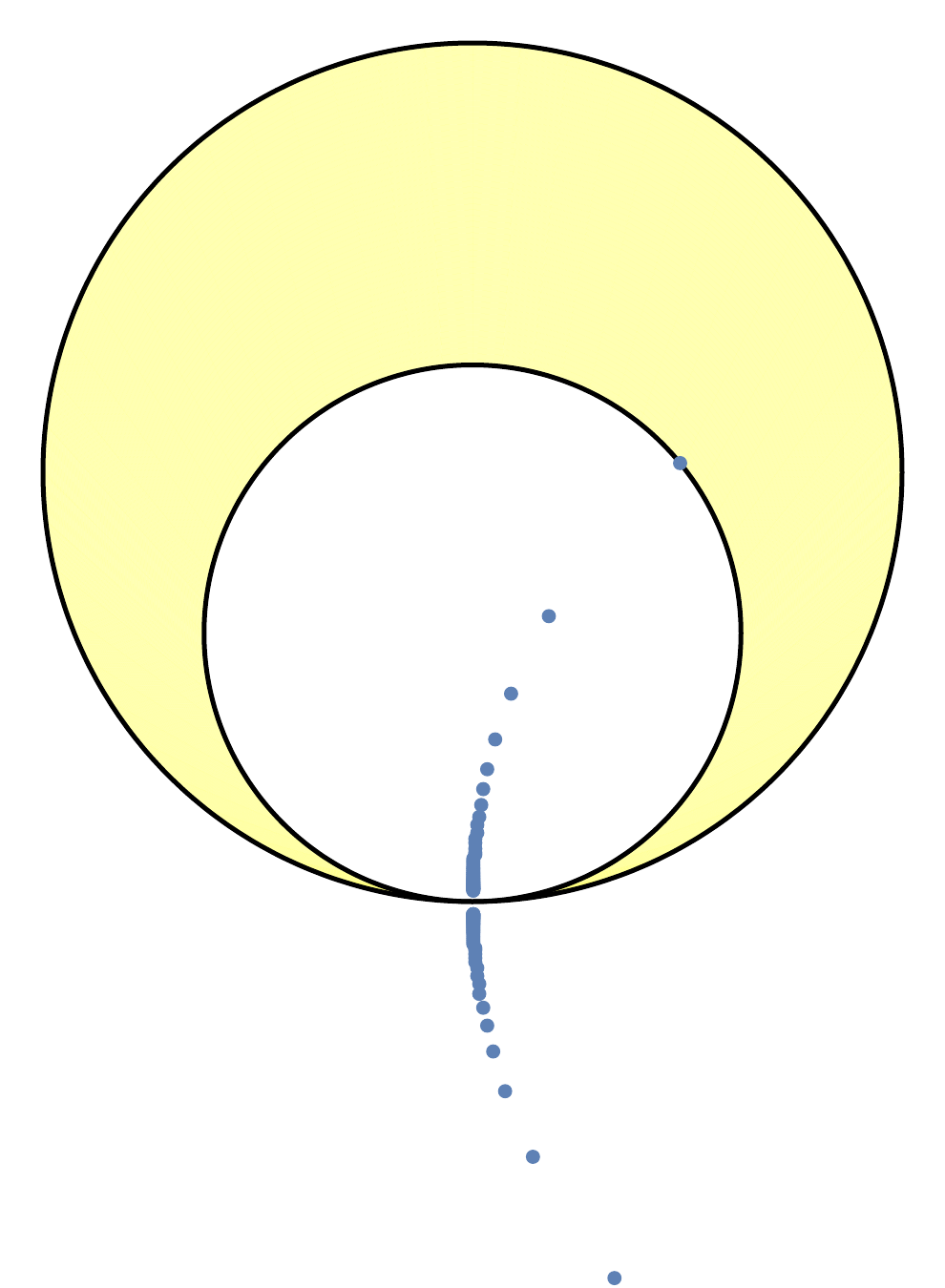}};
\node at (-0.2,0.4) {\tiny $\{p_{k} \hspace{-0.03cm}\}_{k<0}$};
\node at (0.8,-2) {\tiny $\{p_{k} \hspace{-0.03cm}\}_{k>0}$};
\node at (1.1,0.95) {\tiny $p_{0}$};
\end{tikzpicture} \hspace{1.5cm}
\begin{tikzpicture}[slave]
\node at (0,0) {\includegraphics[height=6cm]{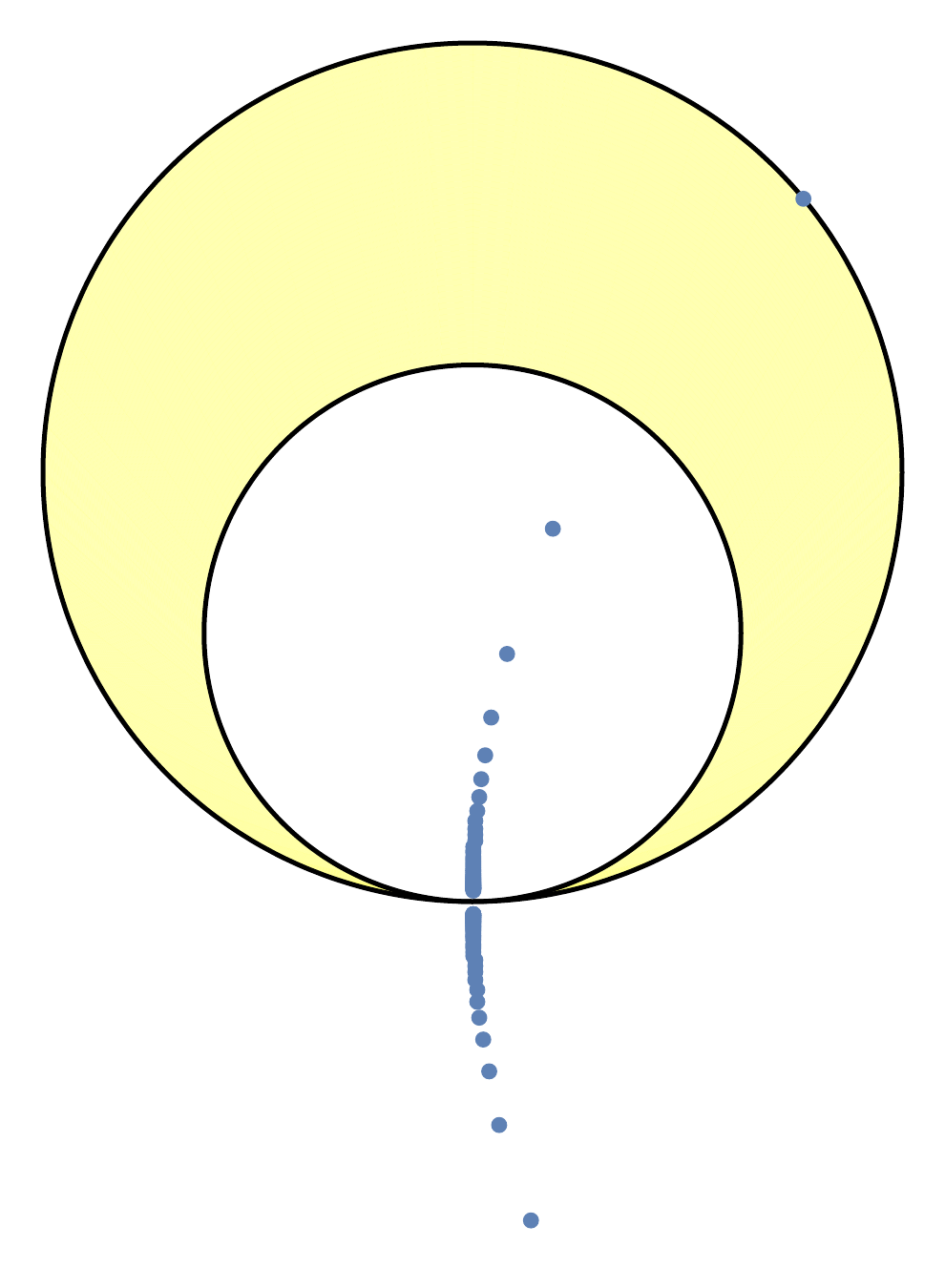}};
\node at (-0.2,0.4) {\tiny $\{q_{k} \hspace{-0.03cm}\}_{k>0}$};
\node at (1.7,2.15) {\tiny $q_{0}$};
\node at (0.7,-1.8) {\tiny $\{q_{k} \hspace{-0.03cm}\}_{k<0}$};
\end{tikzpicture}
\end{center}
\caption{\label{fig:poles} The points $\{p_{k}\}_{k\in \Z}$ (left) and $\{q_{k}\}_{k\in \Z}$ (right) lie outside $\Omega_{a,c}$ (yellow). (In the figure, $\theta = 0.22\pi$, $a=0.75$ and $c=1.2$.)}
\end{figure}

Since the integrand in \eqref{lol1} has no singularity in $\Omega_{a,c}$, we can apply Green's theorem. However, since the integrand has an essential singularity at $0\in \partial \Omega_{a,c}$ and a simple pole at $p_{0}\in \partial \Omega_{a,c}$, extra care must be taken. Using Green's theorem, we obtain
\begin{align}
& \int_{\Omega_{a,c}}  \frac{1}{1-e^{\frac{2c\pi}{c-a}\frac{ai+ae^{i\theta}-w}{(i+e^{i\theta})w}}} \frac{b^{2}}{\pi} w^{b-1}\overline{w}^{b-1}d^{2}w 
= \lim_{\epsilon\to 0_{+}} \int_{\Omega_{a+\epsilon,c-\epsilon}\setminus B_{\epsilon}(0)}  \frac{1}{1-e^{\frac{2c\pi}{c-a}\frac{ai+ae^{i\theta}-w}{(i+e^{i\theta})w}}} \frac{b^{2}}{\pi} w^{b-1}\overline{w}^{b-1}d^{2}w 
	\nonumber \\
& = \frac{1}{2i} \lim_{\epsilon\to 0_{+}} \int_{\partial \big(\Omega_{a+\epsilon,c-\epsilon}\setminus B_{\epsilon}(0)\big)} \frac{1}{1-e^{\frac{2c\pi}{c-a}\frac{ai+ae^{i\theta}-w}{(i+e^{i\theta})w}}} \frac{b^{2}}{\pi} w^{b-1}\frac{\overline{w}^{b}}{b}dw 
	\nonumber \\
& = \frac{b}{2\pi i} \bigg( \int_{C_{c-0_{+}}(ci)} - \int_{C_{a+0_{+}}(ai)} \bigg) \frac{1}{1-e^{\frac{2c\pi}{c-a}\frac{ai+ae^{i\theta}-w}{(i+e^{i\theta})w}}} w^{b-1} \overline{w}^{b} dw \label{lol2}
\end{align}
where $C_{r}(z_{0})$ denotes the circle of radius $r$ centered at $z_{0}$ and oriented in the counterclockwise direction. In the last equality above, we have used that 
\begin{align*}
\frac{1}{2i} \lim_{\epsilon\to 0} \int_{\partial \big(\Omega_{a+\epsilon,c-\epsilon}\setminus B_{\epsilon}(0)\big) \cap \overline{B_{\epsilon}(0)}} \frac{1}{1-e^{\frac{2c\pi}{c-a}\frac{ai+ae^{i\theta}-w}{(i+e^{i\theta})w}}} \frac{b^{2}}{\pi} w^{b-1}\frac{\overline{w}^{b}}{b}dw = \lim_{\epsilon\to 0} \bigO(\epsilon^{2b}) = 0,
\end{align*}
which follows from the fact that 
\begin{align*}
\frac{2c\pi}{c-a}\frac{ai+ae^{i\theta}-w}{(i+e^{i\theta})w} = \frac{2ac\pi}{(c-a)w} + \bigO(1), \qquad \mbox{as } w\to 0.
\end{align*}

We assume from now on that $b=1$ (it is in principle possible to obtain explicit results for the more general case $b\in \N_{>0}$, but it becomes technically more involved).
For $w\in C_{r}(ri)$, we have $\overline{w}= \frac{-irw}{w-ri}$, so the right-hand side of \eqref{lol2} can be rewritten as
\begin{multline}\label{lol3}
\frac{1}{2\pi i} \int_{C_{c-0_{+}}(ci)}\frac{1}{1-e^{\frac{2c\pi}{c-a}\frac{ai+ae^{i\theta}-w}{(i+e^{i\theta})w}}} \frac{-ciw}{w-ci} dw 
- \frac{1}{2\pi i} \int_{C_{a+0_{+}}(ai)}\frac{1}{1-e^{\frac{2c\pi}{c-a}\frac{ai+ae^{i\theta}-w}{(i+e^{i\theta})w}}} \frac{-aiw}{w-ai} dw.
\end{multline}

As $w\to p_{k}$, $k\in \Z$,
\begin{align*}
\frac{2c\pi}{c-a}\frac{ai+ae^{i\theta}-w}{(i+e^{i\theta})w} = 2\pi i k + \frac{1}{\alpha_{k}} (w-p_{k}) + \bigO((w-p_{k})^{2}),
\end{align*}
where 
\begin{align}\label{alphakdef}
\alpha_{k}=\alpha_{k}(\theta) := \frac{ac(c-a)(i+e^{i\theta})^{2}}{2\pi (ic-k(c-a)(i+e^{i\theta}))^{2}}.
\end{align}
Hence, for any $k \in \Z$,
\begin{align*}
\mbox{Res}\bigg( \frac{1}{1-e^{\frac{2c\pi}{c-a}\frac{ai+ae^{i\theta}-w}{(i+e^{i\theta})w}}} \frac{\frac{-i r w}{w-r i}}{\pi}  ; w=p_{k} \bigg) = \frac{\alpha_{k}}{\pi} \frac{ri p_{k}}{p_{k}-ri}, \qquad r\in \{a,c\}.
\end{align*}
Also, a computation yields
\begin{align*}
& -\mbox{Res}\bigg( \frac{1}{1-e^{\frac{2c\pi}{c-a}\frac{ai+ae^{i\theta}-w}{(i+e^{i\theta})w}}}  \frac{-aiw}{w-ai}  ; w=\infty \bigg) = \frac{a^{2}}{1-e^{-cG(\theta)}} - \frac{2 \pi ia^{2}ce^{cG(\theta)}}{(c-a)(1-e^{cG(\theta)})^{2}}, \\
& \mbox{Res}\bigg( \frac{1}{1-e^{\frac{2c\pi}{c-a}\frac{ai+ae^{i\theta}-w}{(i+e^{i\theta})w}}} \frac{-ciw}{w-ci}  ; w=ci \bigg) =  \frac{c^{2}}{1-e^{\frac{2\pi i}{c-a}\frac{ci-ai-ae^{i\theta}}{i+e^{i\theta}}}},
\end{align*}
where $G(\theta):=\frac{2\pi}{(c-a)(i+e^{i\theta})}$. Deforming $C_{a+0_{+}}(ai)$ toward $\infty$ and $C_{c-0_{+}}(ci)$ toward $ci$ in \eqref{lol3}, and using \eqref{lol1} and $\frac{e^{i\theta}}{(i+e^{i\theta})^{2}} = \frac{-i}{2(1+\sin \theta)}$, we obtain, for $z = ai+ae^{i\theta}$ and $\theta \in (-\frac{\pi}{2},\frac{3\pi}{2})$, 
\begin{align}
& \frac{d\nu(z)}{d\theta} = \frac{c}{(c-a)(1+\sin \theta)} \im \bigg\{ \frac{c^{2}}{1-e^{\frac{2\pi i}{c-a}\frac{ci-ai-ae^{i\theta}}{i+e^{i\theta}}}} - \frac{a^{2}}{1-e^{-cG(\theta)}}
+\frac{2 \pi ia^{2}ce^{cG(\theta)}}{(c-a)(1-e^{cG(\theta)})^{2}} \nonumber \\
&\hspace{4.7cm}  + \sum_{k=-\infty}^{0} \alpha_{k} \frac{cip_{k}}{p_{k}-ci} + \sum_{k=1}^{+\infty} \alpha_{k} \frac{aip_{k}}{p_{k}-ai} \bigg\}. \label{nu on Ca in proof}
\end{align}
After further simplifications, we get \eqref{nu on Ca}. 

For the computation of $d\nu(z)$ with $z = ci+ce^{i\theta}$, $\theta \in (-\frac{\pi}{2},\frac{3\pi}{2})$, instead of using the conformal map $\varphi$ defined in \eqref{mapping general d to half plane}, it is more natural to use
\begin{align*}
\phi(z) := \exp \bigg( \pi i \frac{-a}{c-a}\frac{z-2ci}{z} \bigg).
\end{align*}
This function is also a conformal map from $\Omega_{a,c}$ onto $\mathbb{H}$. The formula \eqref{general formula for dnu in terms of green function} yields
\begin{align*}
\frac{d\nu(z)}{|dz|} = \re \frac{2ae^{i\theta}}{c(c-a)(i+e^{i\theta})^{2}} \iint_{\substack{x>0,\alpha\in (-\frac{\pi}{2},\frac{3\pi}{2}): \\ ci+xe^{i\alpha}\in \Omega_{a,c}}}  \frac{1}{1-e^{-\frac{2ia\pi}{c-a}\frac{e^{i\alpha}x-ce^{i\theta}}{(i+e^{i\theta})(c-ie^{i\alpha}x)}}} \frac{b^{2}}{\pi}|ci+xe^{i\alpha}|^{2b-2}xdxd\alpha.
\end{align*}
For $b=1$, a similar analysis as above yields 
\begin{align*}
& \frac{d\nu(z)}{d\theta} = \frac{a}{(c-a)(1+\sin \theta)} 
\im \bigg\{ \frac{c^{2}}{1-e^{\frac{2\pi i}{c-a}\frac{a e^{i\theta}}{i+e^{i\theta}}}} - \frac{a^{2}}{1-e^{aG(\theta)}} 
-\frac{2 \pi ia^{2}ce^{aG(\theta)}}{(c-a)(1-e^{aG(\theta)})^{2}}
	\\
&\hspace{4.7cm}  +  \sum_{k=-\infty}^{0} \beta_{k} \frac{aiq_{k}}{q_{k}-ai} +  \sum_{k=1}^{+\infty} \beta_{k} \frac{ciq_{k}}{q_{k}-ci} \bigg\},
\end{align*}
where $\beta_{k}=\beta_{k}(\theta) := \frac{-ac(c-a)(i+e^{i\theta})^{2}}{2\pi (ia+k(c-a)(i+e^{i\theta}))^{2}}$ and $q_{k}$ is given by \eqref{def of pk and qk}, and after further simplifications we obtain \eqref{nu on Cc}.
This completes the proof of the first part of Theorem \ref{thm:EG tacnode nu}.

It remains to prove \eqref{asymp of dnu near 0 uniform 1}, \eqref{asymp of dnu near 0 uniform 2}, and \eqref{asymp of nu near 0 uniform}. From \eqref{nu on Ca}, we infer that
\begin{align}
& \frac{d\nu(ai+ae^{i\theta})}{d\theta} = \re \bigg[ \frac{ac^{3}}{\pi} \sum_{k=-\infty}^{0} f_{k}(\theta) \frac{ip_{k}(\theta)}{p_{k}(\theta)-ci} + \frac{a^{2}c^{2}}{\pi} \sum_{k=1}^{+\infty} f_{k}(\theta) \frac{ip_{k}(\theta)}{p_{k}(\theta)-ai} \bigg] + \bigO(e^{-\frac{c\pi}{c-a}\frac{1}{\theta+\frac{\pi}{2}}}) \nonumber \\
& = \frac{a^{2}(3c-2a)}{2c\pi}(\theta + \tfrac{\pi}{2})^{2} +   \sum_{k=1}^{+\infty} \re \bigg[ \frac{ac^{3}}{\pi} f_{-k}(\theta) \frac{ip_{-k}(\theta)}{p_{-k}(\theta)-ci} + \frac{a^{2}c^{2}}{\pi} f_{k}(\theta) \frac{ip_{k}(\theta)}{p_{k}(\theta)-ai} \bigg] + \bigO\big((\theta + \tfrac{\pi}{2})^{4}\big) \label{lol18}
\end{align}
as $\theta \searrow -\frac{\pi}{2}$. Let $\lambda := k(\theta + \frac{\pi}{2})$. 
We observe from (\ref{def of pk and qk}) that, for $j = 0,1, 2, 3, 4$,
\begin{align}\label{fkpkbounds}
\bigg|\frac{\partial^j}{\partial\theta^j}\Big|_{\lambda} f_k(\theta)\bigg| \leq \frac{C}{1 + \lambda^2}, \qquad \bigg|\frac{\partial^j}{\partial\theta^j}\Big|_{\lambda} \frac{p_k(\theta)}{p_{k}(\theta)-a i}\bigg| \leq \frac{C}{1 + |\lambda|}, \qquad \bigg|\frac{\partial^j}{\partial\theta^j}\Big|_{\lambda} \frac{p_k(\theta)}{p_{k}(\theta)-c i}\bigg| \leq \frac{C}{1 + |\lambda|}
\end{align}
for all $\theta > -\frac{\pi}{2}$ sufficiently close to $-\frac{\pi}{2}$ and all $k \in \R$, where $\frac{\partial}{\partial\theta}|_{\lambda}$ denotes the partial derivative with respect to $\theta$ with $\lambda$ held fixed. Taylor expanding around $\theta = -\frac{\pi}{2}$ with $\lambda \geq 0$ held fixed, and then using (\ref{fkpkbounds}) to bound the error term uniformly in $\lambda$, we obtain, as $\theta \searrow -\frac{\pi}{2}$,
\begin{multline*}
\re \bigg[ \frac{ac^{3}}{\pi} f_{-k}(\theta) \frac{ip_{-k}(\theta)}{p_{-k}(\theta)-ci} + \frac{a^{2}c^{2}}{\pi} f_{k}(\theta) \frac{ip_{k}(\theta)}{p_{k}(\theta)-ai} \bigg] \\
= h_{2}(\lambda) (\theta + \tfrac{\pi}{2})^{2} + h_{3}(\lambda) (\theta + \tfrac{\pi}{2})^{3} + \bigO \bigg( \frac{(\theta + \tfrac{\pi}{2})^{4}}{1+\lambda^{3}} \bigg)
\end{multline*}
uniformly for $\lambda \geq 0$, where
\begin{align*}
& h_{2}(\lambda) = \frac{a^{2}(2c-a)c^{3}\big( c^{4}-6(c-a)^{2}c^{2}\lambda^{2}+(c-a)^{4}\lambda^{4} \big)}{\pi(c^{2}+(c-a)^{2}\lambda^{2})^{4}}, \\
& h_{3}(\lambda) = -\frac{a^{2}(c-a)^{3}c^{3}\lambda\big( 5c^{4}-10(c-a)^{2}c^{2}\lambda^{2}+(c-a)^{4}\lambda^{4} \big)}{\pi(c^{2}+(c-a)^{2}\lambda^{2})^{5}}.
\end{align*}
Using the Euler--MacLaurin formula, as $\theta \searrow -\frac{\pi}{2}$ we get
\begin{multline*}
\sum_{k=1}^{+\infty} \re \bigg[ \frac{ac^{3}}{\pi} f_{-k}(\theta) \frac{ip_{-k}(\theta)}{p_{-k}(\theta)-ci} + \frac{a^{2}c^{2}}{\pi} f_{k}(\theta) \frac{ip_{k}(\theta)}{p_{k}(\theta)-ai} \bigg] = (\theta + \tfrac{\pi}{2})^{2} \bigg[ \frac{1}{\theta + \frac{\pi}{2}}\int_{0}^{\infty} h_{2}(\lambda)d\lambda \\  - \frac{h_{2}(0)}{2} + \bigO(\theta + \tfrac{\pi}{2}) \bigg] + (\theta + \tfrac{\pi}{2})^{3} \bigg[ \frac{1}{\theta + \frac{\pi}{2}}\int_{0}^{\infty} h_{3}(\lambda)d\lambda + \bigO(1) \bigg] + \bigO\big( (\theta + \tfrac{\pi}{2})^{3} \big).
\end{multline*}
Since
\begin{align*}
\int_{0}^{\infty} h_{2}(\lambda)d\lambda = 0, \qquad h_{2}(0) = \frac{a^{2}(2c-a)}{c\pi}, \qquad \int_{0}^{\infty} h_{3}(\lambda)d\lambda = -\frac{a^{2}(c-a)}{4c\pi},
\end{align*}
we have
\begin{align*}
\sum_{k=1}^{+\infty} \re \bigg[ \frac{ac^{3}}{\pi} f_{-k}(\theta) \frac{ip_{-k}(\theta)}{p_{-k}(\theta)-ci} + \frac{a^{2}c^{2}}{\pi} f_{k}(\theta) \frac{ip_{k}(\theta)}{p_{k}(\theta)-ai} \bigg] = -\frac{a^{2}(5c-3a)}{4c\pi}(\theta + \tfrac{\pi}{2})^{2} + \bigO\big( (\theta + \tfrac{\pi}{2})^{3} \big),
\end{align*}
and substituting the above into \eqref{lol18} yields \eqref{asymp of dnu near 0 uniform 1}. The proof of \eqref{asymp of dnu near 0 uniform 2} is similar. 

We now turn to the proof of \eqref{asymp of nu near 0 uniform}. The first identity in \eqref{asymp of nu near 0 uniform} directly follows from the symmetry
\begin{align*}
\frac{d\nu(ai+ae^{i(-\frac{\pi}{2}+\theta)})}{d\theta} = \frac{d\nu(ai+ae^{i(\frac{3\pi}{2}-\theta)})}{d\theta}, \qquad \frac{d\nu(ci+ce^{i(-\frac{\pi}{2}+\theta)})}{d\theta} = \frac{d\nu(ci+ce^{i(\frac{3\pi}{2}-\theta)})}{d\theta},
\end{align*}
which can be verified using \eqref{nu on Ca} and \eqref{nu on Cc}. For all $r \in (0, 2a]$, 
\begin{align}\label{lol19}
\nu\big(\partial \Omega \cap B_r(0) \cap \{z:\re z>0\}\big) = \int_{-\frac{\pi}{2}}^{\theta_{a}(r)} d\nu(ai+ae^{i\theta})  + \int_{-\frac{\pi}{2}}^{\theta_{c}(r)} d\nu(ci+ce^{i\theta}),
\end{align}
where
\begin{align*}
\theta_{a}(r) = \arcsin \bigg( \frac{r^2}{2a^2} -1 \bigg), \qquad \theta_{c}(r) = \arcsin \bigg( \frac{r^2}{2c^2} -1 \bigg).
\end{align*}
Taking $r\to 0$ in \eqref{lol19}, and using \eqref{asymp of dnu near 0 uniform 1}--\eqref{asymp of dnu near 0 uniform 2}, we find \eqref{asymp of nu near 0 uniform}. This finishes the proof of Theorem \ref{thm:EG tacnode nu}.

\section{Hole probabilities: proof of Theorem \ref{thm:EG tacnode C}}\label{holesec}

Let $\beta>0$ and $\tau\in [0,1)$. Let $S$ and $\mu$ be as in (\ref{Sdef}) and \eqref{mu S EG beginning of intro}. Let $z_{0}\in \C$, $\theta_{0}\in (-\pi,\pi]$, and $0<a<c$ be such that $z_0 + e^{i\theta_0}\Omega_{a,c} \subset S$. By \cite[Theorem 2.5 (i)]{C2023}, as $n \to +\infty$ we have $\mathbb{P}(\# \{z_{j}\in z_0 + e^{i\theta_0}\Omega_{a,c}\} = 0) = \exp \big( -Cn^{2}+o(n^{2}) \big)$, where 
\begin{align}\label{C in computation}
C = \frac{\beta}{4}\bigg(  \int_{\partial \Omega_{a,c}}Q(z)d\nu(z) - \frac{1}{1-\tau^{2}} \int_{\Omega_{a,c}}Q(z)\frac{d^{2}z}{\pi} \bigg),
\end{align}
$Q(z) = \frac{1}{1-\tau^{2}}\big( |z|^{2}-\tau \, \re z^{2} \big)$, and $\nu = \mathrm{Bal}(\mu|_{\Omega_{a,c}},\partial \Omega_{a,c})$. 

By \cite[Theorem 2.24]{C2023}, $C=C(\tau,z_{0},\theta_{0})$ satisfies
\begin{align}\label{Cscaling}
C(\tau,z_{0},\theta_{0})=\frac{1}{(1-\tau^{2})^{2}}C(0,0,0).
\end{align}
Hence in what follows we assume that $z_{0}=0$, $\theta_{0}=0$, and $\tau=0$.

\subsection{Evaluation of $\int_{\Omega_{a,c}}Q(z)\frac{d^{2}z}{\pi}$}
Let us compute the second integral in (\ref{C in computation}). Using the parametrization \eqref{param of U} of $\Omega_{a,c}$, we get
\begin{align}
\int_{\Omega_{a,c}}Q(z)\frac{d^{2}z}{\pi} & = \int_{-\frac{\pi}{2}}^{\frac{3\pi}{2}}\int_{a}^{R_{e}(\theta)} \big( a^{2}+r^{2}+2ar\sin \theta \big) \frac{rdrd\theta}{\pi} \nonumber \\
& = \frac{a^{2}}{2\pi} \int_{-\pi}^{\pi}R_{e}(\theta)^{2}d\theta + \frac{1}{4\pi} \int_{-\pi}^{\pi} R_{e}(\theta)^{4}d\theta + \frac{2a}{3\pi}\int_{-\pi}^{\pi}R_{e}(\theta)^{3}\sin \theta \; d\theta - \frac{3a^{4}}{2}. \label{lol8}
\end{align}

\begin{lemma}
The following identities hold:
\begin{align}
& \int_{-\pi}^{\pi}R_{e}(\theta)^{2}d\theta = 2\pi c^{2}, \label{lol4} \\
& \int_{-\pi}^{\pi} R_{e}(\theta)^{4}d\theta = 2\pi c^{2}(2a^{2}-4ac+3c^{2}), \label{lol5} \\
& \int_{-\pi}^{\pi}R_{e}(\theta)^{3}\sin \theta \; d\theta = 3\pi (c-a)c^{2}. \label{lol6}
\end{align}
\end{lemma}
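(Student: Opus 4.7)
The plan is to derive all three identities from a single quadratic relation satisfied by $R_{e}(\theta)$, together with a parity observation that kills off the square-root term in any integral with odd power of $\sin\theta$.

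\textbf{Step 1 (Key quadratic identity).} A point $z = ai + re^{i\theta}$ lies on the outer circle $|z-ci|=c$ iff $r^{2} - 2(c-a)r\sin\theta + (c-a)^{2} - c^{2} = 0$. The positive root $R_{e}(\theta)$ of this quadratic coincides with the expression in \eqref{def of Re}, since $c^{2}-(c-a)^{2}\cos^{2}\theta = a(2c-a)\cos^{2}\theta + c^{2}\sin^{2}\theta$. Rewriting the quadratic at $r=R_{e}(\theta)$ gives the master identity
\begin{equation}\label{masterid}
R_{e}(\theta)^{2} \;=\; a(2c-a) \;+\; 2(c-a)\, R_{e}(\theta)\sin\theta.
\end{equation}

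\textbf{Step 2 (Parity).} The function $E(\theta) := R_{e}(\theta) - (c-a)\sin\theta = \sqrt{a(2c-a)\cos^{2}\theta + c^{2}\sin^{2}\theta}$ is even in $\theta$, hence $E(\theta)\sin^{k}\theta$ is odd and integrates to $0$ over $[-\pi,\pi]$ for every odd $k\geq 1$. Consequently, for odd $k$,
\begin{equation}\label{Reoddk}
\int_{-\pi}^{\pi} R_{e}(\theta)\sin^{k}\theta\,d\theta \;=\; (c-a)\int_{-\pi}^{\pi}\sin^{k+1}\theta\,d\theta.
\end{equation}
In particular \eqref{Reoddk} with $k=1$ gives $\int_{-\pi}^{\pi}R_{e}\sin\theta\,d\theta = (c-a)\pi$, and with $k=3$ gives $\int_{-\pi}^{\pi}R_{e}\sin^{3}\theta\,d\theta = \tfrac{3\pi(c-a)}{4}$.

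\textbf{Step 3 (Proof of \eqref{lol4}).} Integrate \eqref{masterid} over $[-\pi,\pi]$ and use the $k=1$ case of \eqref{Reoddk}:
\[
\int_{-\pi}^{\pi}R_{e}^{2}\,d\theta = 2\pi a(2c-a) + 2(c-a)\cdot (c-a)\pi = 2\pi c^{2}.
\]

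\textbf{Step 4 (Proof of \eqref{lol6}).} Multiplying \eqref{masterid} by $R_{e}\sin\theta$ and integrating,
\[
\int_{-\pi}^{\pi} R_{e}^{3}\sin\theta\,d\theta = a(2c-a)\int_{-\pi}^{\pi}R_{e}\sin\theta\,d\theta + 2(c-a)\int_{-\pi}^{\pi}R_{e}^{2}\sin^{2}\theta\,d\theta.
\]
For the last integral, multiply \eqref{masterid} by $\sin^{2}\theta$ and integrate, using $\int_{-\pi}^{\pi}\sin^{2}\theta\,d\theta = \pi$ together with the $k=3$ case of \eqref{Reoddk}, to obtain $\int R_{e}^{2}\sin^{2}\theta\,d\theta = \pi a(2c-a) + \tfrac{3\pi}{2}(c-a)^{2}$. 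Substituting back and collecting terms yields $3\pi(c-a)c^{2}$.

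\textbf{Step 5 (Proof of \eqref{lol5}).} Multiplying \eqref{masterid} by $R_{e}^{2}$ and integrating,
\[
\int_{-\pi}^{\pi}R_{e}^{4}\,d\theta = a(2c-a)\int_{-\pi}^{\pi}R_{e}^{2}\,d\theta + 2(c-a)\int_{-\pi}^{\pi}R_{e}^{3}\sin\theta\,d\theta,
\]
which by Steps 3--4 equals $2\pi a(2c-a)c^{2} + 6\pi(c-a)^{2}c^{2} = 2\pi c^{2}(2a^{2}-4ac+3c^{2})$, after the elementary simplification $a(2c-a)+3(c-a)^{2}=2a^{2}-4ac+3c^{2}$.

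The only potential obstacle is the arithmetic bookkeeping at the end of Step 5, but it reduces to a one-line algebraic identity. Crucially, \eqref{masterid} lets us lower the power of $R_{e}$ in every integral until only combinations of $\int R_{e}\sin^{k}\theta\,d\theta$ remain, which by \eqref{Reoddk} are elementary.
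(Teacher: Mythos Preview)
Your proof is correct and takes a genuinely different, more elementary route than the paper's. The paper proceeds by analytic continuation: it defines a function $\hat{R}_{e}(z)$ on the Riemann sphere with cuts so that $\hat{R}_{e}(e^{i\theta})=R_{e}(\theta)$, changes variables $z=e^{i\theta}$, and evaluates the resulting contour integrals by residues and contour deformation (the square-root pieces vanish by symmetry after deformation toward $0$ or $\infty$). Your argument instead exploits the algebraic origin of $R_{e}$ as a root of a quadratic to obtain the recursion \eqref{masterid}, and uses the evenness of $R_{e}(\theta)-(c-a)\sin\theta$ to reduce every needed integral to $\int\sin^{2m}\theta\,d\theta$. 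This avoids complex analysis entirely and makes the proofs of \eqref{lol5} and \eqref{lol6}---which the paper omits as ``similar (albeit more involved)''---fully explicit and short. The paper's method has the advantage of being a template that would extend to higher powers or other weights by residue calculus, while yours is tailored to exactly these integrals but is considerably cleaner for them.
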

\begin{proof}
To compute these integrals, we define 
\begin{align*}
\hat{R}_{e}(z) := (c-a) \frac{z-z^{-1}}{2i} + \frac{\sqrt{a^{2}+c^{2}-2ac}}{2} \frac{\sqrt{z_{2}-z}\sqrt{z-z_{1}} \sqrt{z+z_{1}} \sqrt{z+z_{2}} }{z},
\end{align*}
where the principal branch is used for the roots, and
\begin{align*}
z_{2}:= \frac{c+\sqrt{a(2c-a)}}{c-a}, \qquad z_{1} := \frac{c-\sqrt{a(2c-a)}}{c-a}.
\end{align*}
Note that $z_{2}>1$ and $z_{1}\in (0,1)$. So $\hat{R}_{e}$ is analytic in $\C\setminus \big( (-\infty,-z_{2}]\cup [-z_{1},z_{1}] \cup [z_{2},+\infty)\big)$, and has simple poles at $0$ and $\infty$ (on both sides of the cuts). It is easy to check that $\hat{R}_{e}(e^{i\theta}) = R_{e}(\theta)$ for all $\theta \in \R$ (recall that $R_{e}(\theta)$ is defined in \eqref{def of Re}).

For \eqref{lol4}, we apply the change of variables $z=e^{i\theta}$,
\begin{align*}
\int_{-\pi}^{\pi}R_{e}(\theta)^{2}d\theta = \oint_{S_{1}} \hat{R}_{e}(z)^{2} \frac{dz}{iz} = I_{1}+I_{2},
\end{align*}
where $S_{1}$ is the unit circle oriented positively and
\begin{align*}
& I_{1} = \oint_{S_{1}} \bigg\{ \bigg( (c-a) \frac{z-z^{-1}}{2i} \bigg)^{2} + \frac{a^{2}+c^{2}-2ac}{4} \frac{(z_{2}-z)(z-z_{1})(z+z_{1})(z+z_{2})}{z^{2}} \bigg\}  \frac{dz}{iz}, \\
& I_{2} = (c-a)\frac{\sqrt{a^{2}+c^{2}-2ac}}{2} \oint_{S_{1}} (z^{-3}-z^{-1})\sqrt{z_{2}-z}\sqrt{z-z_{1}} \sqrt{z+z_{1}} \sqrt{z+z_{2}} dz.
\end{align*}
The integrand of $I_{1}$ is analytic in $\C\setminus \{0\}$, and can therefore be computed by a residue computation. This gives $I_{1} = 2\pi c^{2}$. For $I_{2}$, by deforming the contour towards $\infty$, we get
\begin{multline*}
\oint_{S_{1}} z^{-3}\sqrt{z_{2}-z}\sqrt{z-z_{1}} \sqrt{z+z_{1}} \sqrt{z+z_{2}} dz = \lim_{M\to +\infty} \bigg[ \oint_{C_{M}(0)} \sqrt{z_{2}-z}\sqrt{z-z_{1}} \sqrt{z+z_{1}} \sqrt{z+z_{2}} \frac{dz}{z^{3}} \\   -2i \bigg(\int_{-M}^{-z_{2}} + \int_{z_{2}}^{M} \bigg) x^{-3}\sqrt{x^{2}-z_{2}^{2}}\sqrt{x^{2}-z_{1}^{2}}dx \bigg] = 0,
\end{multline*}
since the first integral $\oint_{C_{M}(0)} = \oint_{C_{M}(0)\cap \mathbb{H}}+\oint_{C_{M}(0) \setminus \mathbb{H}}$ tends to $\pi-\pi=0$ as $M\to + \infty$, and since the integrand of the second integral is odd. Similarly, by deforming the contour towards $0$, we get
\begin{align}\label{lol7}
\oint_{S_{1}} z^{-1}\sqrt{z_{2}-z}\sqrt{z-z_{1}} \sqrt{z+z_{1}} \sqrt{z+z_{2}} dz = -2i\dashint_{-z_{1}}^{z_{1}}\sqrt{z_{2}^{2}-x^{2}}\sqrt{z_{1}^{2}-x^{2}} \frac{dx}{x} = 0,
\end{align}
where $\dashint$ stands for ``principal value integral". For the first equality in \eqref{lol7}, we have used that
\begin{align*}
\underset{z=0_{+}i}{\res}\bigg( \frac{\sqrt{z_{2}-z}}{z}\sqrt{z-z_{1}} \sqrt{z+z_{1}} \sqrt{z+z_{2}} \bigg) = -\underset{ z=0_{-}i}{\res}\bigg( \frac{\sqrt{z_{2}-z}}{z}\sqrt{z-z_{1}} \sqrt{z+z_{1}} \sqrt{z+z_{2}} \bigg),
\end{align*}
and for the second equality in \eqref{lol7} we have used the fact that the integrand is odd. Thus $I_{2}=0$ and \eqref{lol4} follows.

The proofs of \eqref{lol5} and \eqref{lol6} are similar (albeit more involved), so we omit them.
\end{proof}

Substituting \eqref{lol4}, \eqref{lol5}, and \eqref{lol6} into \eqref{lol8}, we conclude that the second integral in (\ref{C in computation}) is given by
\begin{align}\label{lol17}
\int_{\Omega_{a,c}}Q(z)\frac{d^{2}z}{\pi} = \frac{3}{2}(c^{4}-a^{4}).
\end{align}

\subsection{Evaluation of $\int_{\partial \Omega_{a,c}}Q(z)d\nu(z)$}
We compute the first integral in (\ref{C in computation}). 
Since $\partial \Omega_{a,c} = C_{a}(ia)\cup C_{c}(ic)$, we have
\begin{align}\nonumber
\int_{\partial \Omega_{a,c}}Q(z)d\nu(z) & = \int_{-\frac{\pi}{2}}^{\frac{3\pi}{2}} |ai+ae^{i\theta}|^{2}d\nu(ai+ae^{i\theta}) + \int_{-\frac{\pi}{2}}^{\frac{3\pi}{2}} |ci+ce^{i\theta}|^{2}d\nu(ci+ce^{i\theta}) 
	\\ \label{intOmegaacQnu}
& = 2a^{2} \int_{-\frac{\pi}{2}}^{\frac{3\pi}{2}} (1+\sin \theta) d\nu(ai+ae^{i\theta}) + 2c^{2} \int_{-\frac{\pi}{2}}^{\frac{3\pi}{2}} (1+\sin \theta) d\nu(ci+ce^{i\theta}).
\end{align}
Let $\{\alpha_k\}_{k\in\Z}$ be as in (\ref{alphakdef}). Using \eqref{nu on Ca in proof}, we get
\begin{multline}\label{lol13}
\int_{-\frac{\pi}{2}}^{\frac{3\pi}{2}} (1+\sin \theta) d\nu(ai+ae^{i\theta}) = \frac{c}{c-a}\im \int_{-\frac{\pi}{2}}^{\frac{3\pi}{2}} \bigg\{ \frac{c^{2}}{1-e^{\frac{2\pi i}{c-a}\frac{ci-ai-ae^{i\theta}}{i+e^{i\theta}}}} - \frac{a^{2}}{1-e^{\frac{-2\pi c}{(c-a)(i+e^{i\theta})}}}  \\
+ \frac{2\pi i a^{2}c}{(c-a)\big(e^{\frac{-2\pi c}{(c-a)(i+e^{i\theta})}}+e^{\frac{2\pi c}{(c-a)(i+e^{i\theta})}}-2\big)} + c \sum_{k=-\infty}^{0} \alpha_{k} \frac{ip_{k}}{p_{k}-ci} + a \sum_{k=1}^{+\infty} \alpha_{k} \frac{ip_{k}}{p_{k}-ai} \bigg\} d\theta.
\end{multline}

\begin{lemma}\label{lemma:pkc}
Assume that $\frac{c}{c-a}\notin \N:=\{0,1,\ldots\}$.
\begin{itemize}
\item[(i)] For $k\in \big(\lceil \frac{\frac{c}{2}-a}{c-a} \rceil+\N\big) \cap (-\N)$, we have
\begin{align*}
\int_{-\pi}^{\pi} \alpha_{k}(\theta) \frac{ip_{k}(\theta)}{p_{k}(\theta)-ci}d\theta = \frac{-ia^{2}c}{(c+(c-a)|k|)^{2}(|k|+1)} + \frac{ic^{2}}{(a-(c-a)|k|)(1+|k|)}.
\end{align*}
\item[(ii)] For $k\in \big(\lceil \frac{\frac{c}{2}-a}{c-a} \rceil-1-\N\big) \cap (-\N)$, we have
\begin{align*}
\int_{-\pi}^{\pi} \alpha_{k}(\theta) \frac{ip_{k}(\theta)}{p_{k}(\theta)-ci}d\theta = \frac{-ia^{2}c}{(c+(c-a)|k|)^{2}(|k|+1)}.
\end{align*}
\end{itemize}
\end{lemma}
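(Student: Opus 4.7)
The plan is to evaluate the integral by the residue theorem after converting it to a contour integral on the unit circle via $z = e^{i\theta}$.

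Setting $\lambda_{k} := a + k(c-a)$, a direct algebraic simplification of (\ref{def of pk and qk}) gives
\[
\frac{ip_{k}(\theta)}{p_{k}(\theta) - ci} = \frac{ia(i+e^{i\theta})}{\lambda_{k}(i+e^{i\theta}) - ci}.
\]
Combined with the explicit expression for $\alpha_{k}(\theta)$ in (\ref{alphakdef}), the integrand becomes a rational function of $e^{i\theta}$. Performing the substitution $z = e^{i\theta}$, $d\theta = dz/(iz)$, converts the integral into
\[
\oint_{|z|=1} \frac{a^{2}c(c-a)(i+z)^{3}}{2\pi z\,(k(c-a)(i+z) - ic)^{2}\,(\lambda_{k}(i+z) - ci)}\, dz.
\]

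Next I would locate the singularities of the integrand. There are at most three candidate poles: $z = 0$; the point $z_{2} := i(c - k(c-a))/(k(c-a))$ arising from the double factor (absent when $k = 0$); and $z_{3} := i(c - \lambda_{k})/\lambda_{k}$. For $k \leq -1$, a direct modulus computation yields $|z_{2}| = 1 + c/(|k|(c-a)) > 1$, so $z_{2}$ is always strictly outside the closed unit disk. For $z_{3}$, one checks that $|z_{3}| < 1$ if and only if $(c-a)(1-k) < a - k(c-a)$, equivalently $k > (c/2 - a)/(c-a)$; thus $z_{3}$ lies inside the unit disk precisely in case (i) and outside in case (ii). The hypothesis $c/(c-a) \notin \N$ is invoked to ensure both that $\lambda_{k} \neq 0$ for every $k \in -\N$ (so the integrand is well-defined) and that the borderline case $|z_{3}| = 1$ does not occur, since a short calculation shows $|z_{3}| = 1$ precisely when $c/(c-a)$ is an even positive integer.

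Finally, the residues are computed directly. At $z = 0$, using $k(c-a) - c$ and $\lambda_{k} - c = (k-1)(c-a)$, the residue times $2\pi i$ equals $-ia^{2}c/((c + (c-a)|k|)^{2}(|k|+1))$ after writing $k = -|k|$; this is the first term shared by cases (i) and (ii). At $z = z_{3}$, one uses $i + z_{3} = ci/\lambda_{k}$ together with the identity $k(c-a) - \lambda_{k} = -a$ to reduce $(k(c-a)(i + z_{3}) - ic)^{2} = -a^{2}c^{2}/\lambda_{k}^{2}$; after standard cancellations the residue times $2\pi i$ becomes $ic^{2}/((a - (c-a)|k|)(|k|+1))$, which is the second term in case (i). In case (ii) only the residue at $z=0$ is enclosed, producing the stated formula. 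The main care point is the sign bookkeeping and confirming that the dichotomy between cases (i) and (ii) corresponds exactly to $z_{3}$ crossing the unit circle; once this is in place, the computation is mechanical.
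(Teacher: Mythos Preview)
Your proof is correct and follows essentially the same approach as the paper: both substitute $z=e^{i\theta}$ to obtain a contour integral over the unit circle, identify the same poles (your $z_2,z_3$ are the paper's $m_{\alpha_k},m_{p_k}$), observe that the double pole always lies outside the unit disk while the simple pole lies inside precisely in case (i), and compute the residues at $0$ and at the simple pole. Your treatment is slightly more explicit in the algebraic simplifications and in explaining why the hypothesis $c/(c-a)\notin\N$ rules out both $\lambda_k=0$ and the borderline $|z_3|=1$, but the method is the same.
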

\begin{proof}
Note that $\frac{c}{c-a}\notin \N$ implies both $\frac{a}{c-a}\notin \N$ and $\frac{\frac{c}{2}-a}{c-a}\notin -\N$.

Let $k \in -\N$.
Recalling the definition (\ref{alphakdef}) of $\alpha_{k}(\theta)$ and the expression \eqref{def of pk and qk} for $p_{k}(\theta)$, and performing the change of variables $z=e^{i\theta}$, we get
\begin{align}\label{lol9}
\int_{-\pi}^{\pi} \alpha_{k}(\theta) \frac{ip_{k}(\theta)}{p_{k}(\theta)-ci}d\theta = \oint_{S_{1}} \frac{ac(c-a)(i+z)^{2}}{2\pi(ic-(c-a)k(i+z))^{2}} \frac{i a(i+z)}{a(i+z)-ci(1+ki(i+z)(1-\frac{a}{c}))} \frac{1}{iz} dz.
\end{align}
If $k \in \{-1,-2, \dots\}$, the first fraction has a double pole at
\begin{align}\label{def m alpha k}
m_{\alpha_{k}}:=-i\bigg( 1+\frac{1}{-k}\frac{c}{c-a} \bigg),
\end{align}
and for any $k \in -\N$ the second fraction has a simple pole at
\begin{align}\label{def m pk}
m_{p_{k}} := -i \bigg( 1+\frac{c}{(c-a)(-k)-a} \bigg).
\end{align}
Note that $|m_{\alpha_{k}}|>1$ for all $k\leq -1$, while $|m_{p_{k}}|<1$ if $|k|<\frac{a-\frac{c}{2}}{c-a}$ and $|m_{p_{k}}|>1$ if $|k|>\frac{a-\frac{c}{2}}{c-a}$. Let $h(z)$ be the integrand of the right-hand side of \eqref{lol9}. By deforming the contour towards $0$ in  \eqref{lol9}, and using 
\begin{align*}
\mbox{Res}\big( h(z);z=m_{p_{k}} \big) = \frac{-ic^{2}}{2\pi i (k-1)(a+(c-a)k)}, \quad \mbox{Res}\big( h(z);z=0 \big) = \frac{-ia^{2}c}{2\pi i(c+(c-a)|k|)^{2}(1+|k|)},
\end{align*}
we find the claim. 
\end{proof}

\begin{lemma}\label{lemma:pka}
Assume that $\frac{\frac{c}{2}}{c-a}\notin \N$. 
\begin{itemize}
\item[(i)] For $k\in \{1,2,\ldots,\lceil \frac{\frac{c}{2}}{c-a} \rceil-1\}$, we have
\begin{align*}
\int_{-\pi}^{\pi} \alpha_{k}(\theta) \frac{ip_{k}(\theta)}{p_{k}(\theta)-ai}d\theta = \frac{i a c^2}{(c-a)k^{2}(c+(c-a)k)} - \frac{i ac\big(c^{2}-3(c-a)ck+(c-a)^{2}k^{2}\big)}{(c-a)k^{2}(c-(c-a)k)^{2}}.
\end{align*}
\item[(ii)] For $k\in \{\lceil \frac{\frac{c}{2}}{c-a} \rceil,\lceil \frac{\frac{c}{2}}{c-a} \rceil+1,\ldots\}$, we have
\begin{align*}
\int_{-\pi}^{\pi} \alpha_{k}(\theta) \frac{ip_{k}(\theta)}{p_{k}(\theta)-ai}d\theta = \frac{i a c^2}{(c-a)k^{2}(c+(c-a)k)}.
\end{align*}
\end{itemize}
\end{lemma}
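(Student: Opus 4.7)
The plan is to follow the same template as in Lemma \ref{lemma:pkc}: first substitute $z=e^{i\theta}$ to turn the integral into a contour integral over the unit circle $S_{1}$, then identify the poles of the integrand inside $S_{1}$, and finally apply Cauchy's residue theorem. A short computation gives the useful simplification $\frac{p_{k}(\theta)}{p_{k}(\theta)-ai}=\frac{c(i+z)}{z(c+k(c-a))+ik(c-a)}$, and the integral takes the form $\oint_{S_{1}}h(z)\,dz$ with
\begin{align*}
h(z)=\frac{ac^{2}(c-a)(i+z)^{3}}{2\pi\,z\,(ic-k(c-a)(i+z))^{2}\,(z(c+k(c-a))+ik(c-a))}.
\end{align*}
There are three candidate poles: $z=0$, a simple pole at $z=m_{p_{k},a}:=\frac{-ik(c-a)}{c+k(c-a)}$ coming from $p_{k}/(p_{k}-ai)$, and a double pole at $z=m_{\alpha_{k}}:=\frac{i(c-k(c-a))}{k(c-a)}$ coming from $\alpha_{k}$. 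One checks that $|m_{p_{k},a}|<1$ for every $k\geq 1$, whereas $|m_{\alpha_{k}}|=|1-\frac{c}{k(c-a)}|<1$ precisely when $0<\frac{c}{k(c-a)}<2$, i.e.\ when $k\geq k_{c}:=\lceil\frac{c/2}{c-a}\rceil$. This is exactly the dichotomy appearing in the lemma.

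Rather than computing the residues at $0$ and at $m_{p_{k},a}$ individually, I would exploit the residue at infinity to package them together. A direct expansion yields $h(z)=\frac{ac^{2}}{2\pi k^{2}(c-a)(c+k(c-a))}\cdot z^{-1}+\bigO(z^{-2})$ as $z\to\infty$, so the sum of all finite residues of $h$ equals $\frac{ac^{2}}{2\pi k^{2}(c-a)(c+k(c-a))}$. In case $(ii)$, all three finite poles lie inside $S_{1}$ and the integral equals $2\pi i$ times this sum, which gives $\frac{iac^{2}}{(c-a)k^{2}(c+(c-a)k)}$ at once. In case $(i)$, the pole at $m_{\alpha_{k}}$ sits outside $S_{1}$, so the integral equals $\frac{iac^{2}}{(c-a)k^{2}(c+(c-a)k)}$ minus $2\pi i$ times the residue of $h$ at $m_{\alpha_{k}}$.

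The remaining and only substantial step is therefore computing the residue of $h$ at the double pole $m_{\alpha_{k}}$. Using the factorization $ic-k(c-a)(i+z)=-k(c-a)(z-m_{\alpha_{k}})$, one writes $h(z)=\frac{P(z)}{(z-m_{\alpha_{k}})^{2}}$ with $P$ holomorphic at $m_{\alpha_{k}}$, and then $\mathrm{Res}_{z=m_{\alpha_{k}}}h=P'(m_{\alpha_{k}})$. The main obstacle will be bookkeeping: several powers of $c-k(c-a)$ and $k(c-a)$ must cancel in the right way. Introducing $u:=k(c-a)$ and using the identity $(c-u)(c+u)+u^{2}=c^{2}$ streamlines the algebra, and one arrives at $2\pi i\cdot\mathrm{Res}_{z=m_{\alpha_{k}}}h(z)=\frac{iac(c^{2}-3c(c-a)k+(c-a)^{2}k^{2})}{(c-a)k^{2}(c-(c-a)k)^{2}}$, which is precisely the extra term appearing in case $(i)$. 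This closes the proof.
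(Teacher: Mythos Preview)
Your proof is correct and follows the same residue-calculus template as the paper's proof of Lemma~\ref{lemma:pkc} (which is all the paper invokes here). The only twist is that instead of computing the residues at $z=0$ and at $m_{p_k,a}$ separately, you package them together via the residue at infinity; this is a harmless shortcut and yields the same answer.
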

\begin{proof}
The proof is similar to the proof of Lemma \ref{lemma:pkc}.
\end{proof}
\begin{lemma}\label{lemma:exp1}
Assume that $\frac{c}{c-a}\notin \N$. Then
\begin{align}\label{lol10}
\int_{-\pi}^{\pi} \frac{1}{1-e^{\frac{-2\pi c}{(c-a)(i+e^{i\theta})}}} d\theta = \pi + i \pi \cot \Big( \frac{c\pi}{c-a} \Big) + \frac{ci}{c-a} \sum_{k=\lceil \frac{\frac{c}{2}}{c-a} \rceil}^{+\infty} \frac{1}{k}\frac{1}{k-\frac{c}{c-a}}.
\end{align}
\end{lemma}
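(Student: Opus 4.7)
\smallskip

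\textbf{Proof proposal.} The plan is to convert the integral into a contour integral on the unit circle $S_1$ via the change of variables $z = e^{i\theta}$, and then evaluate it by the residue theorem. After this substitution, the left-hand side of \eqref{lol10} becomes
\begin{align*}
\oint_{S_1} \frac{1}{1 - e^{f(z)}}\,\frac{dz}{iz}, \qquad f(z) := \frac{-2\pi c}{(c-a)(i+z)}.
\end{align*}
The integrand is meromorphic in a neighborhood of $\overline{\D}$, with singularities at $z = 0$ and at the solutions of $e^{f(z)} = 1$. Solving $f(z) \in 2\pi i \Z$, one finds that these solutions form the family
\begin{align*}
z_k = \frac{i(c - k(c-a))}{k(c-a)}, \qquad k \in \Z \setminus \{0\}.
\end{align*}

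The first step is to determine which $z_k$ lie inside the unit disk. For $k < 0$, a direct computation gives $|z_k| = 1 + \frac{c}{|k|(c-a)} > 1$, so these make no contribution. For $k \geq 1$, one checks that $|z_k| < 1$ if and only if $k > \frac{c/2}{c-a}$, i.e., $k \geq \lceil \frac{c/2}{c-a}\rceil$ (this is where the hypothesis $\frac{c}{c-a}\notin \N$ is used to rule out a pole sitting exactly on $S_1$). Since $\frac{c}{c-a} \notin \N$ also ensures that $e^{f(0)} = e^{2\pi i c/(c-a)} \neq 1$, the pole at $z=0$ is simple, with residue $\frac{1}{i(1-e^{2\pi i c/(c-a)})}$.

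Next, I compute the residue at each $z_k$ with $k \geq \lceil \frac{c/2}{c-a}\rceil$. Since $1 - e^{f(z)} = -f'(z_k)(z-z_k) + O((z-z_k)^2)$ and $f'(z_k) = \frac{2\pi c}{(c-a)(i+z_k)^2} = -\frac{2\pi (c-a)k^2}{c}$, and since $iz_k = \frac{k(c-a)-c}{k(c-a)}$, the residue at $z_k$ works out to
\begin{align*}
\frac{1}{iz_k}\cdot \frac{-1}{f'(z_k)} = \frac{c}{2\pi k(c-a)\bigl(k - \frac{c}{c-a}\bigr)}.
\end{align*}
The terms decay like $k^{-2}$, which justifies the absolute convergence needed for the residue theorem on the shrinking contour encircling all interior poles.

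The final step is to assemble the pieces and simplify. By the residue theorem,
\begin{align*}
\oint_{S_1} \frac{1}{1 - e^{f(z)}}\frac{dz}{iz} = \frac{2\pi}{1-e^{2\pi i c/(c-a)}} + \frac{ic}{c-a}\sum_{k=\lceil \frac{c/2}{c-a}\rceil}^{+\infty} \frac{1}{k\bigl(k-\frac{c}{c-a}\bigr)}.
\end{align*}
To match \eqref{lol10}, it remains to show $\frac{2\pi}{1-e^{2i\alpha}} = \pi + i\pi \cot \alpha$ with $\alpha = \frac{c\pi}{c-a}$, which follows from writing $1 - e^{2i\alpha} = -2i e^{i\alpha}\sin \alpha$ and expanding $e^{-i\alpha}/\sin\alpha$ in real and imaginary parts. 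I do not anticipate a real obstacle in the argument; the only subtle point is the bookkeeping for which $z_k$ lie inside $S_1$ and the verification that the contour can be shrunk toward $0$ so that all infinitely many poles are captured, which is handled by the $k^{-2}$ decay of the residues.
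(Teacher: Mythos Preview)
Your proposal is correct and follows essentially the same route as the paper: the substitution $z=e^{i\theta}$, the identification of the poles $z_k$ (which coincide with the paper's $m_{\alpha_k}$), the determination of which ones lie inside $S_1$, and the residue computation all match. One small inaccuracy: the integrand is not meromorphic in a full neighborhood of $\overline{\D}$, since the poles $z_k$ accumulate at $-i\in S_1$ (an essential singularity of $f$); however, your remark about shrinking the contour toward $0$ and the $k^{-2}$ decay of the residues is exactly what is needed to handle this, and the paper's ``deform toward $0$'' is the same maneuver stated more tersely.
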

\begin{proof}
After applying the change of variables $z=e^{i\theta}$ to the left-hand side of \eqref{lol10}, the integrand has simple poles only at $0$, $\infty$, and whenever
\begin{align*}
\frac{-2\pi c}{(c-a)(i+z)}=2\pi i k \qquad \text{for some $k\in \Z$},
\end{align*}
i.e., if and only if $z=0$, $z=\infty$, or $k\neq 0$ and $z=m_{\alpha_{k}}$, where $m_{\alpha_{k}}$ is given by \eqref{def m alpha k}. Since $|m_{\alpha_{k}}|>1$ for $k<\frac{\frac{c}{2}}{c-a}\notin \N$ and $|m_{\alpha_{k}}|<1$ otherwise, by deforming the contour towards $0$ we get after a residue computation
\begin{align*}
\int_{-\pi}^{\pi} \frac{1}{1-e^{\frac{-2\pi c}{(c-a)(i+e^{i\theta})}}} d\theta = \frac{2\pi}{1-e^{\frac{2\pi i c}{c-a}}} + \frac{ci}{c-a} \sum_{k=\lceil \frac{\frac{c}{2}}{c-a} \rceil}^{+\infty} \frac{1}{k}\frac{1}{k-\frac{c}{c-a}},
\end{align*}
and the claim readily follows. 
\end{proof}

\begin{lemma}\label{lemma:exp2}
Assume that $\frac{c}{c-a}\notin \N$. Then
\begin{align}\label{lol11}
\int_{-\pi}^{\pi} \frac{1}{1-e^{\frac{2\pi i}{c-a}\frac{ci-ai-ae^{i\theta}}{i+e^{i\theta}}}} d\theta = \pi - i \pi \cot \Big( \frac{c\pi}{c-a} \Big) - \frac{ci}{c-a} \sum_{k=\lceil \frac{\frac{c}{2}}{c-a} \rceil}^{+\infty} \frac{1}{k}\frac{1}{k-\frac{c}{c-a}}
\end{align}
\end{lemma}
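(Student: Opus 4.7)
My plan is to mirror the proof of Lemma~\ref{lemma:exp1}, with one essential new feature at the origin. The starting point is the algebraic identity
\begin{align*}
\frac{2\pi i}{c-a}\,\frac{ci-ai-az}{i+z}
= \frac{-2\pi c}{(c-a)(i+z)} - \frac{2\pi i a}{c-a},
\end{align*}
which follows from $(c-a)i - az = (c-a)(i+z) - cz$ together with $\frac{cz}{i+z} = c - \frac{ic}{i+z}$. Writing $g_{1}(z) := \frac{-2\pi c}{(c-a)(i+z)}$ as in Lemma~\ref{lemma:exp1} and $g_{2}(z) := \frac{2\pi i}{c-a}\frac{ci-ai-az}{i+z}$, this says $g_{2}(z) = g_{1}(z) - \frac{2\pi ia}{c-a}$, so the integrand of \eqref{lol11} is obtained from that of \eqref{lol10} by rescaling $e^{g_{1}(z)}$ by the constant $e^{-2\pi ia/(c-a)}$.

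After the change of variables $z=e^{i\theta}$, the integral becomes $\oint_{S_{1}}\frac{dz}{iz\bigl(1-e^{g_{2}(z)}\bigr)}$. The zeros of $1-e^{g_{2}(z)}$ are the solutions of $g_{1}(z) = \frac{2\pi i(a+(c-a)k)}{c-a}$ with $k\in\Z$, namely $z = \frac{i(c-a)(1-k)}{a+(c-a)k}$; these extend the points $m_{p_{k}}$ of \eqref{def m pk} to all $k\in\Z$. The essentially new feature compared to Lemma~\ref{lemma:exp1} is that for $k=1$ this zero lies exactly at $z=0$, so the simple pole of $\frac{1}{iz}$ combines with a simple zero of $1-e^{g_{2}(z)}$ to produce a double pole of the full integrand at the origin. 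A direct calculation shows $|m_{p_{k}}|<1$ iff $k > \frac{c-2a}{2(c-a)}$, which, using the hypothesis $\frac{c}{c-a}\notin\N$, is equivalent to $k \geq \lceil \frac{c/2}{c-a}\rceil$.

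I then assemble the residues as in the proof of Lemma~\ref{lemma:exp1}. Since $g_{2}' = g_{1}'$ and $i+m_{p_{k}} = ic/(a+(c-a)k)$, each simple pole contributes
\begin{align*}
2\pi i\cdot \mbox{Res}\bigg(\frac{1}{iz(1-e^{g_{2}(z)})};\, z=m_{p_{k}}\bigg) = \frac{ic}{(k-1)(a+(c-a)k)}\qquad (k\neq 1).
\end{align*}
The double pole at $z=0$ is treated by Taylor expanding $g_{2}(z) = 2\pi i - \frac{2\pi c}{c-a}z - \frac{2\pi ic}{c-a}z^{2} + \bigO(z^{3})$, whence $1-e^{g_{2}(z)} = \frac{2\pi c}{c-a}z\bigl(1+\bigO(z)\bigr)$, allowing one to read off the coefficient of $z^{-1}$ in the Laurent expansion of $\frac{1}{iz(1-e^{g_{2}(z)})}$.

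Summing $2\pi i$ times all these residues and repackaging the resulting series via partial fractions and the cotangent identity $\pi\cot(\pi\alpha) = \lim_{N\to\infty}\sum_{k=-N}^{N}\frac{1}{k+\alpha}$ yields exactly the right-hand side of \eqref{lol11}. The main technical obstacle is the bookkeeping required to split the $k$-sum into a symmetric two-sided part (giving the $\cot$ term) and a one-sided tail corresponding to the poles inside the unit disk; this parallels the analogous manipulation in the proof of Lemma~\ref{lemma:exp1}. As an internal consistency check, the claimed formula is equivalent to $I_{1}+I_{2}=2\pi$, where $I_{1}$ and $I_{2}$ denote the integrals in \eqref{lol10} and \eqref{lol11}, respectively.
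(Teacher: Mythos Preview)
Your direct residue approach is viable but takes a considerably longer route than the paper, and it contains a concrete error. The paper's proof is essentially one line: from your identity $g_2(z)=g_1(z)-\frac{2\pi ia}{c-a}$ and the elementary computation $\overline{g_1(z)}=g_1(z)-\frac{2\pi ic}{c-a}$ for $|z|=1$ (since $\frac{1-iz}{i+z}=-i$), one gets $g_2(e^{i\theta})=\overline{g_1(e^{i\theta})}+2\pi i$, hence $e^{g_2(e^{i\theta})}=\overline{e^{g_1(e^{i\theta})}}$. Thus the integrand of \eqref{lol11} is the complex conjugate of that of \eqref{lol10}, and conjugating the result of Lemma~\ref{lemma:exp1} immediately yields \eqref{lol11}. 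Your consistency check $I_1+I_2=2\pi$ is a weaker consequence of the stronger relation $I_2=\overline{I_1}$.

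The concrete error in your sketch is the claim that $k>\frac{c-2a}{2(c-a)}$ is equivalent to $k\geq\big\lceil\frac{c/2}{c-a}\big\rceil$. These thresholds differ by $\frac{a}{c-a}$; for $a$ close to $c$ the former is large and negative while the latter is large and positive, so the set of interior poles you would collect is entirely wrong. If you keep the correct threshold and reindex via $j=1-k$, your residue contribution becomes $\frac{ic}{c-a}\cdot\frac{1}{j(j-\frac{c}{c-a})}$ summed over integers $j<\frac{c/2}{c-a}$, $j\neq 0$; writing this as the full symmetric sum over $j\neq 0$ (which evaluates to $-i\pi\cot\frac{c\pi}{c-a}+\frac{i(c-a)}{c}$ via the cotangent expansion) minus the tail $j\geq\big\lceil\frac{c/2}{c-a}\big\rceil$, and adding the double-pole contribution $\pi-\frac{i(c-a)}{c}$ at the origin, one does recover \eqref{lol11}. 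So your strategy can be completed, but the bookkeeping is heavier than your sketch acknowledges, and the conjugation argument bypasses all of it.
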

\begin{proof}
A calculation shows that the integrand in (\ref{lol11}) is the complex conjugate of the integrand in (\ref{lol10}). Hence, the desired conclusion follows from Lemma \ref{lol10}.
\end{proof}

\begin{lemma}\label{lemma:exp3}
Assume that $\frac{c}{c-a}\notin \N$. Then
\begin{align}\label{lol12}
\int_{-\pi}^{\pi} \frac{1}{2-e^{\frac{-2\pi c}{(c-a)(i+e^{i\theta})}}-e^{\frac{2\pi c}{(c-a)(i+e^{i\theta})}}} d\theta = \frac{\pi}{1-\cos \frac{2\pi c}{c-a}} - \frac{c}{\pi (c-a)} \sum_{k=\lceil \frac{\frac{c}{2}}{c-a} \rceil}^{+\infty} \frac{1}{k^{2}}\frac{k-\frac{\frac{c}{2}}{c-a}}{(k-\frac{c}{c-a})^{2}}
\end{align}
\end{lemma}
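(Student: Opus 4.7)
The plan is to follow the same contour-integral template used in the proofs of Lemmas~\ref{lemma:exp1} and~\ref{lemma:exp2}: substitute $z=e^{i\theta}$ and evaluate the resulting unit-circle integral by deforming the contour toward $0$. Writing $U(z):=\tfrac{2\pi c}{(c-a)(i+z)}$, the new feature here is that the identity $2-e^{-u}-e^{u}=-u^{2}(1+\tfrac{u^{2}}{12}+\cdots)$ shows that the denominator $2-e^{-U(z)}-e^{U(z)}$ has \emph{double} zeros at the points $z=m_{\alpha_k}$ (rather than the simple zeros of $1-e^{-U}$ that appeared in Lemma~\ref{lemma:exp1}), so the contributing interior singularities will be poles of order two.

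After the substitution $z=e^{i\theta}$ the integral becomes $\oint_{S_{1}}g(z)\,dz$ with $g(z):=\tfrac{1}{iz(2-e^{-U(z)}-e^{U(z)})}$. The interior singularities of $g$ are a simple pole at $z=0$ (where the denominator $2-2\cos\tfrac{2\pi c}{c-a}$ is nonzero by the assumption $c/(c-a)\notin\N$), together with a double pole at each $m_{\alpha_k}$ with $|m_{\alpha_k}|<1$; by the analysis already carried out in the proof of Lemma~\ref{lemma:exp1}, these are precisely the $m_{\alpha_k}$ with $k\geq K:=\lceil\tfrac{c/2}{c-a}\rceil$. A direct computation gives $\mbox{Res}_{z=0}\,g=\tfrac{1}{i(2-2\cos(2\pi c/(c-a)))}$, which after multiplication by $2\pi i$ produces the first term $\tfrac{\pi}{1-\cos(2\pi c/(c-a))}$ on the right-hand side of \eqref{lol12}.

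For the double pole at $z=m_{\alpha_k}$, $k\geq K$, I will set $u(z):=U(z)-U(m_{\alpha_k})$ and expand $u=A(z-m_{\alpha_k})+B(z-m_{\alpha_k})^{2}+\bigO((z-m_{\alpha_k})^{3})$ with $A=U'(m_{\alpha_k})$ and $B=\tfrac{1}{2}U''(m_{\alpha_k})$. Combining the resulting Laurent expansion $u^{-2}=A^{-2}(z-m_{\alpha_k})^{-2}-2BA^{-3}(z-m_{\alpha_k})^{-1}+\bigO(1)$ with the Taylor expansion of $1/(iz)$ around $m_{\alpha_k}$ and reading off the coefficient of $(z-m_{\alpha_k})^{-1}$ in $-u^{-2}/(iz)$ gives
\[
\mbox{Res}_{z=m_{\alpha_k}}\,g=\frac{1}{iA^{2}m_{\alpha_k}^{2}}+\frac{2B}{iA^{3}m_{\alpha_k}}.
\]
Substituting the explicit values $i+m_{\alpha_k}=\tfrac{ic}{k(c-a)}$, $A=\tfrac{2\pi k^{2}(c-a)}{c}$, $B=\tfrac{2\pi ik^{3}(c-a)^{2}}{c^{2}}$, and using the factorizations $k(c-a)-c=(c-a)(k-\tfrac{c}{c-a})$ and $2k(c-a)-c=2(c-a)(k-\tfrac{c/2}{c-a})$, this residue will collapse to $\tfrac{ic(2k(c-a)-c)}{4\pi^{2}k^{2}(k(c-a)-c)^{2}}$; multiplying by $2\pi i$ and summing over $k\geq K$ will reproduce precisely the series on the right-hand side of \eqref{lol12}.

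The hard part will be the clean bookkeeping of the two Laurent coefficients at each double pole: one must retain both the second-derivative term $U''(m_{\alpha_k})$ (giving $B$) and the first derivative of $1/(iz)$ at $m_{\alpha_k}$, and then simplify the combined expression so that the residue factors as $\tfrac{k-c/(2(c-a))}{k^{2}(k-c/(c-a))^{2}}$ up to a constant prefactor. Compared with the simple-pole calculations carried out in the proofs of Lemmas~\ref{lemma:exp1}--\ref{lemma:exp2}, this double-pole bookkeeping is the only genuinely new ingredient.
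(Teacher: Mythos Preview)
Your proposal is correct and follows essentially the same approach as the paper: substitute $z=e^{i\theta}$, deform the unit circle toward $0$, and pick up the residue at $z=0$ together with the double-pole residues at the interior points $m_{\alpha_k}$ with $k\ge\lceil\tfrac{c/2}{c-a}\rceil$. The paper's proof is a terse two-sentence sketch that only names the singularities and says ``computing the residues \ldots we find \eqref{lol12}''; you have simply carried out the double-pole bookkeeping that the paper omits, and your residue values check out.
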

\begin{proof}
We first apply the change of variables $z=e^{i\theta}$ to the left-hand side of \eqref{lol10}. Then the integrand has simple poles at $0$ and $\infty$, and double poles at $z=m_{\alpha_{k}}$, $k\in \Z\setminus\{0\}$, where $m_{\alpha_{k}}$ is given by \eqref{def m alpha k}. By deforming the contour towards $0$ and computing the residues at $z=m_{\alpha_{k}}$ for $k>\frac{\frac{c}{2}}{c-a}$, we find \eqref{lol12}.
\end{proof}

Combining Lemmas \ref{lemma:pkc}, \ref{lemma:pka}, \ref{lemma:exp1}, \ref{lemma:exp2}, \ref{lemma:exp3} with \eqref{lol13}, after simplifications we find for
$\frac{c}{c-a}\notin \N$ that
\begin{align}
& \int_{-\frac{\pi}{2}}^{\frac{3\pi}{2}} (1+\sin \theta) d\nu(ai+ae^{i\theta}) = \frac{c}{c-a} \bigg\{ \sum_{k=1}^{+\infty} \bigg[ \frac{a^{2}c}{c-a} \frac{1}{(k-\frac{c}{c-a})^{2}} + \frac{ac^{2}}{c-a} \frac{1}{(k+\frac{a}{c-a})^{2}} \nonumber \\
& - \frac{a^{2}+c^{2}}{k-\frac{c}{c-a}} + \frac{c^{2}}{k+\frac{a}{c-a}} + \frac{a^{2}}{k+\frac{c}{c-a}} \bigg] - \pi (c^{2}+a^{2}) \cot (\tfrac{c\pi}{c-a}) -  \frac{a^{2}c}{c-a} \frac{2\pi^{2}}{1-\cos \frac{2\pi c}{c-a}}\bigg\}. \label{lol14}
\end{align}
Using the relations \cite[eqs 1.421.3 and 1.422.4]{GRtable}, namely
\begin{align*}
& \cot(\pi x) = \frac{1}{\pi x} + \frac{2x}{\pi}\sum_{k=1}^{+\infty} \frac{1}{x^{2}-k^{2}}, & & \frac{2}{1-\cos (2\pi x)} = \frac{1}{\sin^{2}(\pi x)} = \frac{1}{\pi^{2}x^{2}}+\frac{2}{\pi^{2}}\sum_{k=1}^{+\infty} \frac{x^{2}+k^{2}}{(x^{2}-k^{2})^{2}},
\end{align*}
which are both valid for $x\in \R\setminus \Z$, we can write \eqref{lol14} as
\begin{align}\nonumber
\int_{-\frac{\pi}{2}}^{\frac{3\pi}{2}} (1+\sin \theta) d\nu(ai+ae^{i\theta}) = &\; \frac{ac^{3}}{(c-a)^{2}} \sum_{k=1}^{+\infty} \frac{1}{(k+\frac{a}{c-a})^{2}} - \frac{a^{2}c^{2}}{(c-a)^{2}} \sum_{k=1}^{+\infty} \frac{1}{(k+\frac{c}{c-a})^{2}} 
	\\ \label{lol15}
& + \frac{c^{3}}{c-a}\sum_{k=1}^{+\infty} \bigg( \frac{1}{k+\frac{a}{c-a}} - \frac{1}{k+\frac{c}{c-a}} \bigg) -2a^{2}-c^{2}
\end{align}
for $\frac{c}{c-a}\notin \N$. Let $a>0$ be fixed, let $c_{\star}>a$ be such that $\frac{c_{\star}}{c_{\star}-a}\in \N$. The right-hand side of \eqref{lol15} is continuous at $c=c_{\star}$. A direct analysis using \eqref{nu on Ca in proof} shows that the left-hand side of (\ref{lol15}) also is continuous at $c = c_{\star}$. We conclude that (\ref{lol15}) is valid for all $c>a>0$.

The analysis for $\int_{-\frac{\pi}{2}}^{\frac{3\pi}{2}} (1+\sin \theta) d\nu(ci+ce^{i\theta})$ is similar (so we omit details), and we find
\begin{multline}\label{lol16}
\int_{-\frac{\pi}{2}}^{\frac{3\pi}{2}} (1+\sin \theta) d\nu(ci+ce^{i\theta}) 
= \frac{-a^{2}c^{2}}{(c-a)^{2}} \sum_{k=1}^{+\infty} \frac{1}{(k+\frac{a}{c-a})^{2}} + \frac{a^{3}c}{(c-a)^{2}} \sum_{k=1}^{+\infty} \frac{1}{(k+\frac{c}{c-a})^{2}} \\
+ \frac{a^{3}}{c-a}\sum_{k=1}^{+\infty} \bigg( \frac{1}{k+\frac{a}{c-a}} - \frac{1}{k+\frac{c}{c-a}} \bigg) + c^{2}.
\end{multline}
Substituting \eqref{lol15} and \eqref{lol16} into (\ref{intOmegaacQnu}), we find that the first integral in (\ref{C in computation}) is given by
\begin{align}\nonumber
 \int_{\partial \Omega_{a,c}}Q(z)d\nu(z)
 =& - \frac{2a^2 c^3}{c-a} \sum_{k=1}^{+\infty} \frac{1}{(k+\frac{a}{c-a})^{2}}
 + \frac{2a^3 c^2}{c-a} \sum_{k=1}^{+\infty} \frac{1}{(k+\frac{c}{c-a})^{2}} 
 	\\ \label{firstintegral1}
& + \frac{2a^2 c^2(c+a)}{c-a} \sum_{k=1}^{+\infty} \bigg( \frac{1}{k+\frac{a}{c-a}} - \frac{1}{k+\frac{c}{c-a}} \bigg)  + 2 c^4 -4 a^4-2 a^2 c^2.
\end{align}

\subsection{Final steps}
The digamma function and its first derivative can be written as (see \cite[5.2.2, 5.7.6]{NIST})
\begin{align}\label{def of psi and psip1p}
& \psi(z) = -\gamma_{\mathrm{E}} + \sum_{m=0}^{+\infty} \frac{z-1}{(m+1)(m+z)}, \qquad \psi'(z) = \sum_{m=0}^{+\infty} \frac{1}{(m+z)^{2}}, \qquad \mbox{for all } z>0,
\end{align}
where $\gamma_{\mathrm{E}}\approx 0.577$ is Euler's gamma constant.
Using these identities, we can write (\ref{firstintegral1}) as
\begin{align}\nonumber
 \int_{\partial \Omega_{a,c}}Q(z)d\nu(z)
 =& - \frac{2a^2 c^3}{c-a} \psi'\left(\frac{a}{c-a}\right)
 + \frac{2a^3 c^2}{c-a} \psi'\left(\frac{c}{c-a}\right)
 	\\ \nonumber
& + \frac{2a^2 c^2(c+a)}{c-a} \bigg[\psi\left(\frac{c}{c-a}+1\right)-\psi\left(\frac{a}{c-a}+1\right)\bigg]
   	\\ \label{firstintegral}
& -2 \left(a^4+a^3 c+a^2 c^2+a c^3-2 c^4\right).
\end{align}
Substituting (\ref{lol17}) and (\ref{firstintegral}) into (\ref{C in computation}) (evaluated at $\tau = 0$) and then using (\ref{Cscaling}), we arrive at
\begin{align*}
C(\tau,z_{0},\theta_{0})= &\; \frac{1}{(1-\tau^{2})^{2}}\frac{\beta}{4}\bigg\{ - \frac{2a^2 c^3}{c-a} \psi'\left(\frac{a}{c-a}\right)
 + \frac{2a^3 c^2}{c-a} \psi'\left(\frac{c}{c-a}\right)
 	\\ 
& + \frac{2a^2 c^2(c+a)}{c-a} \bigg[\psi\left(\frac{c}{c-a}+1\right)-\psi\left(\frac{a}{c-a}+1\right)\bigg]
   	\\
& -2 \left(a^4+a^3 c+a^2 c^2+a c^3-2 c^4\right)
 -  \frac{3}{2}(c^{4}-a^{4}) \bigg\}.
\end{align*}
Simplification gives
\begin{align*}
C(\tau,z_{0},\theta_{0})= \frac{\beta c^{4}}{8(1-\tau^{2})^{2}}F\bigg(\frac{a}{c}\bigg),
\end{align*}
where
\begin{align}\nonumber
F(x) =&\; 5-4x - 4 x^2 - 4x^3  - x^4 -\frac{4x^2}{1-x}\psi'\bigg(\frac{1}{1-x} - 1\bigg) + \frac{4x^3}{1-x}\psi'\bigg(\frac{1}{1-x}\bigg)
	\\\label{Foriginal}
& + \frac{4x^{2}(1+x)}{1-x}\bigg\{ \psi\bigg(\frac{1}{1-x} + 1\bigg) - \psi\bigg(\frac{1}{1-x}\bigg) \bigg\}. 
\end{align}
Using the identities
$$\begin{cases}
\psi(y+ 1) = \frac{1}{y} + \psi(y), \\
 \psi'(y+ 1) = -\frac{1}{y^2} + \psi'(y), 
 \end{cases} \qquad \text{for $y > 0$},$$
which follow from repeated differentiation of $\log \Gamma(y+1) = \log{y} + \log \Gamma(y)$, $y > 0$, we see that the function $F(x)$ in (\ref{Foriginal}) can be written as in (\ref{def of F}). 
This finishes the proof of Theorem \ref{thm:EG tacnode C}.

\appendix

\section{Estimates of harmonic measure}\label{estimateapp}
Suppose $\Omega$ is an open subset of $\C^*$ fulfilling $(i)$ and $(ii)$ of Theorem \ref{mainth2} with $z_0 = 0$. After shrinking $\rho_0$ if necessary, we may assume that the components $\{U_j\}_1^m$ of $\Omega \cap B_{\rho_0}(z_0)$ are such that $U_j \cap \partial B_r(0)$ is a connected curve for every $r \in (0, \rho_0)$ and $j =1, \dots, m$, and such that $\bar{\Omega} \cap \overline{B_{\rho_0}(0)} = \cup_{j=1}^m \bar{U}_j$.

\begin{lemma}\label{extremaldistancelemma}
Fix $j \in \{1, \dots, m\}$ and let $0 < r_0 < R_0 \leq \rho_0$. Let $z_1 \in \Omega$ be such that $|z_1| \geq R_0$. If $r \Theta_j(r)$ is the length of the arc $U_j \cap \partial B_r(0)$, then
\begin{align}\label{omegaleq8pi}
\omega(z_1, \partial U_j \cap B_{r_0}(0), \Omega) \leq \frac{8}{\pi} e^{-\pi \int_{r_0}^{R_0} \frac{dr}{r \Theta_j(r)}}.
\end{align}
\end{lemma}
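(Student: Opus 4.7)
The plan is to deduce (\ref{omegaleq8pi}) from the harmonic-measure form of the classical Ahlfors distortion theorem, applied to the simply connected channel of $U_j$ between $\gamma_{r_0}:= U_j\cap \partial B_{r_0}(0)$ and $\gamma_{R_0}:= U_j\cap \partial B_{R_0}(0)$. Throughout, write $\mathcal{A} := \partial U_j \cap B_{r_0}(0)$ and $u(z) := \omega(z,\mathcal{A},\Omega)$.

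First, I would reduce to the supremum of $u$ over $\gamma_{R_0}$ by a strong Markov argument. The hypothesis $\overline{U}_i\cap \overline{U}_k=\{0\}$ for $i\neq k$, combined with the fact that planar Brownian motion almost surely avoids any single point, implies that every Brownian path starting at $z_1$ that first exits $\Omega$ at a point of $\mathcal{A}\subset \overline{B_{r_0}(0)}$ must previously cross $\gamma_{R_0}$; the strong Markov property at this hitting time gives $u(z_1)\leq \sup_{w\in\gamma_{R_0}} u(w)$.

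Next, I would bound $u$ on $\gamma_{R_0}$ by a harmonic measure in the simply connected Jordan channel $V:=U_j\cap\{r_0<|z|<R_0\}$, whose boundary decomposes as $\alpha\cup \gamma_{r_0}\cup \gamma_{R_0}$ with $\alpha:=\partial U_j\cap\{r_0\le |z|\le R_0\}\subset\partial\Omega$. Any BM path reaching $\mathcal{A}$ must first cross $\gamma_{r_0}$, so by the maximum principle $u\leq \tilde u:=\omega(\cdot,\gamma_{r_0},\Omega\setminus\gamma_{r_0})$ in $\Omega$. A second strong Markov reduction, organized around excursions of Brownian motion across $\gamma_{R_0}$ between $V$ and $\Omega\setminus \overline{U_j\cap B_{R_0}(0)}$, shows that only the excursions into $V$ can contribute to a hit of $\gamma_{r_0}$, and a standard renewal-type estimate collapses the successive excursions to yield
\[
\sup_{w\in\gamma_{R_0}} \tilde u(w)\leq \sup_{w\in\gamma_{R_0}} \omega(w,\gamma_{r_0},V).
\]

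Finally, I would invoke the Ahlfors--Beurling estimate for simply connected Jordan domains with two distinguished boundary arcs (see e.g.\ Ahlfors, \emph{Conformal Invariants}, or Garnett--Marshall, \emph{Harmonic Measure}), which provides the sharp constant:
\[
\omega(w,\gamma_{r_0},V)\leq \frac{8}{\pi}\exp\bigl(-\pi\, d_V(\gamma_{r_0},\gamma_{R_0})\bigr), \qquad w\in\gamma_{R_0},
\]
where $d_V$ is the extremal distance. The lower bound $d_V(\gamma_{r_0},\gamma_{R_0})\geq \int_{r_0}^{R_0} dr/(r\Theta_j(r))$ comes from the radial conformal metric $\rho(z):=1/(|z|\Theta_j(|z|))$ on $V$: in polar coordinates, both the $\rho$-length of a radial segment joining $\gamma_{r_0}$ to $\gamma_{R_0}$ and the $\rho$-area of $V$ are equal to $\int_{r_0}^{R_0} dr/(r\Theta_j(r))$, so the extremal length $L(\rho)^2/A(\rho)$ realizes this value. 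Chaining the three inequalities produces (\ref{omegaleq8pi}).

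The main obstacle is the reduction from the multiply connected global domain $\Omega$ to the simply connected channel $V$ in the second step while preserving the sharp constant $\frac{8}{\pi}$: the excursion/renewal argument bounding $\tilde u$ on $\gamma_{R_0}$ by $\omega(\cdot,\gamma_{r_0},V)$ is delicate because Brownian motion can repeatedly cross $\gamma_{R_0}$ between $V$ and $\Omega\setminus\overline{V}$, and one must verify that these iterated crossings do not inflate the harmonic measure beyond its value in the single-channel domain.
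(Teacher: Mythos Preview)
Your overall strategy---strong Markov reduction, then an Ahlfors--Beurling/Carleman extremal-distance estimate, with the extremal distance bounded below via the radial metric $\rho(z)=1/(|z|\Theta_j(|z|))$---is exactly the route the paper points to (it cites \cite[Theorem~IV.6.2 and Theorem~H.8]{GM2005} and the companion paper \cite{CL Corner}). Steps~1 and~3 are correct.

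The gap is in Step~2. The inequality $\sup_{\gamma_{R_0}}\tilde u\le \sup_{\gamma_{R_0}}\omega(\cdot,\gamma_{r_0},V)$ is in the wrong direction: on $V$, the function $\tilde u=\omega(\cdot,\gamma_{r_0},\Omega\setminus\gamma_{r_0})$ is harmonic with boundary values $1$ on $\gamma_{r_0}$, $0$ on $\alpha$, and some \emph{positive} value on $\gamma_{R_0}$, whereas $\omega(\cdot,\gamma_{r_0},V)$ has the same boundary data except $0$ on $\gamma_{R_0}$; the maximum principle therefore gives $\tilde u\ge \omega(\cdot,\gamma_{r_0},V)$ in $V$, not $\le$. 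Your proposed renewal/excursion fix does not repair this: since $\gamma_{R_0}$ is an interior arc, a Brownian path started on it crosses it infinitely often immediately, so the ``return probability'' in your scheme is $1$ and the geometric series diverges. You correctly flagged this step as the obstacle, but the mechanism you describe cannot recover the constant $8/\pi$ (or any finite constant) this way.

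The argument the paper has in mind bypasses this reduction entirely. One does \emph{not} compare harmonic measures in $\Omega$ and in $V$; instead one uses monotonicity of \emph{extremal length}, which goes the right way. Every curve in $\Omega$ joining $z_1$ (or $\gamma_{R_0}$) to $\mathcal A$ contains a subarc in $V$ joining $\gamma_{R_0}$ to $\gamma_{r_0}$, so by the overflow rule the relevant extremal distance in $\Omega$ is at least $d_V(\gamma_{R_0},\gamma_{r_0})\ge\int_{r_0}^{R_0}\frac{dr}{r\Theta_j(r)}$. The harmonic-measure bound $\omega\le\frac{8}{\pi}e^{-\pi\lambda}$ is then applied with this extremal length directly; this is the content of the cited results in \cite{GM2005} (in particular the Carleman--Ahlfors estimate of Appendix~H works without simple connectivity). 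In short: replace the attempted monotonicity of harmonic measure in Step~2 by monotonicity of extremal length, and invoke the general-domain form of the estimate rather than the simply-connected one.
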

\begin{proof}
The idea of the proof is to estimate extremal distance (see e.g. \cite[Theorem IV.6.2]{GM2005} and \cite[Theorem H.8]{GM2005}). In the case of an arbitrary finite number of corners but no cusps at $z_0$, a proof is presented in \cite[Lemma 4.2]{CL Corner}. The same proof applies also when cusps are present.
\end{proof}

For $j = 1, \dots, m_1$, let $\pi \alpha_j$ be the opening angle at the corner of $U_j$ at $0$, and let $\gamma_j > 0$ be the corresponding H\"older exponent. For $j = m_1 + 1, \dots, m$, let $d_j> 0$ and $a_{j}>0$ be the order and coefficient of tangency, respectively, of the cusp of $U_j$ at $0$. 

\begin{lemma}\label{Thetajlemma}
Shrinking $\rho_0 >0$ if necessary, there is a constant $C_j > 0$ such that, for $j = 1, \dots, m$ and $0 < r \leq \rho_0$,
\begin{align}\label{Thetajestimate}
\Theta_j(r) \leq \begin{cases}
\pi \alpha_j(1 + C_j r^{\gamma_j}), & j = 1, \dots, m_1,
	\\ 
a_j r^{d_j} (1+C_{j}r^{d_{j}}), & j = m_1 +1, \dots, m.
\end{cases}	
\end{align}
\end{lemma}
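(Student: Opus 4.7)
My plan is to treat the corner and cusp cases separately, using a direct Hölder expansion for corners and a pullback through the defining power map for cusps.

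For a corner ($j \leq m_1$), I would parameterize each boundary arc $C_\pm$ of $U_j$ emanating from $0$ by arc length, obtaining $w_\pm:[0,s_0]\to\C$ with $w_\pm(0)=0$, $|w_\pm'(s)|=1$, $w_\pm'\in C^{0,\gamma_j}$, and $w_\pm'(0)=e^{i\phi_\pm}$ where $\phi_-=\phi_++\pi\alpha_j$ by the definition \eqref{arg def angle}. The Hölder regularity then gives
\begin{equation*}
w_\pm(s) = se^{i\phi_\pm} + \int_0^s \bigl(w_\pm'(t)-e^{i\phi_\pm}\bigr)\,dt = se^{i\phi_\pm} + O(s^{1+\gamma_j}).
\end{equation*}
For small $s$ the modulus $|w_\pm(s)|=s(1+O(s^{\gamma_j}))$ is strictly increasing, so the equation $|w_\pm(s)|=r$ has a unique solution $s_\pm(r)=r(1+O(r^{\gamma_j}))$, giving the unique intersection of $C_\pm$ with $\partial B_r(0)$. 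Computing arguments yields $\arg w_\pm(s_\pm(r))=\phi_\pm+O(r^{\gamma_j})$, so the connected arc $U_j\cap\partial B_r(0)$ has angular measure $\Theta_j(r)=(\phi_--\phi_+)+O(r^{\gamma_j})=\pi\alpha_j+O(r^{\gamma_j})$, which after shrinking $\rho_0$ yields the corner case of \eqref{Thetajestimate}.

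For a cusp ($j>m_1$), let $\varphi(z)=z^{p_j}$ be the power map from Section \ref{section:def of a cusp}, which maps $U_j$ bijectively onto a domain $\varphi(U_j)$ possessing an analytic cusp at $0$ with small perturbation of angles. Since $\varphi$ sends $\partial B_r(0)$ onto $\partial B_{r^{p_j}}(0)$ and multiplies angular coordinate by $p_j$, bijectivity of $\varphi|_{U_j}$ gives
\begin{equation*}
p_j\,\Theta_j(r) = \Theta_j^{\mathrm{an}}(r^{p_j}),
\end{equation*}
where $\Theta_j^{\mathrm{an}}$ is the analogous angular function for the analytic cusp $\varphi(U_j)$. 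By the small perturbation of angles assumption, there exist an integer $D_j\geq 1$ and $A_j>0$ with
\begin{equation*}
\Theta_j^{\mathrm{an}}(s) = A_j s^{D_j} + O(s^{2D_j+1}) \quad \text{as } s\to 0.
\end{equation*}
Setting $d_j=p_jD_j$ and $a_j=A_j/p_j$ (which identifies the order and coefficient of tangency of $U_j$), substitution yields $\Theta_j(r)=a_j r^{d_j}+O(r^{2d_j+p_j})$. Since $p_j>0$ and $r\leq 1$, one has $r^{2d_j+p_j}\leq r^{2d_j}$, giving the cusp case of \eqref{Thetajestimate}.

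The main technical point is producing the $r^{2d_j}$ error in the cusp case. Without the small perturbation of angles condition, the analytic cusp would only satisfy $\Theta_j^{\mathrm{an}}(s)=A_js^{D_j}+O(s^{D_j+1})$, which pulls back to $\Theta_j(r)=a_jr^{d_j}(1+O(r^{p_j}))$, and $r^{p_j}$ is not dominated by $r^{d_j}$ unless $p_j\geq d_j$. The vanishing of the coefficients $a_{D_j+1},\ldots,a_{2D_j}$ imposed by the small perturbation assumption is precisely what shifts the error to $O(r^{2d_j+p_j})$, giving a bound uniform in the relationship between $p_j$ and $d_j$.
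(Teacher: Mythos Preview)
Your proposal is correct and follows essentially the same approach as the paper. The paper's own proof is extremely terse: for corners it cites \cite[Lemma 3.1]{CL Corner}, and for cusps it simply says the bound follows from the definition (the paper already records $\Theta(r)=ar^d+O(r^{2d+p})$ immediately after defining a cusp). Your argument unpacks both of these---a direct H\"older expansion for the corner arcs, and the pullback through the power map $\varphi(z)=z^{p_j}$ for cusps---and your final paragraph correctly identifies why the small-perturbation-of-angles hypothesis is what produces the $O(r^{2d_j})$ error rather than merely $O(r^{d_j+p_j})$.
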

\begin{proof}
For $j = 1, \dots, m_1$, the statement follows from \cite[Lemma 3.1]{CL Corner}.
For $j = m_1 +1, \dots, m$, the statement is a consequence of the definition of the order and coefficient of tangency of a cusp. 
\end{proof}

The proofs of our main results use the following lemma.

\begin{lemma}\label{omegaupperboundlemma}
Suppose $\Omega$ is an open subset of $\C^*$ fulfilling $(i)$ and $(ii)$ of Theorem \ref{mainth2} with $z_0 = 0$ and $m \geq 1$. 
Fix $j \in \{1, \dots, m\}$ and let $\alpha_j$, $C_j$, $\gamma_j$, $d_j$ and $a_{j}$ be as in (\ref{Thetajestimate}). If $r > 0$ and $z \in \Omega$ are such that $0 < r < |z| \leq \rho_0$, then
\begin{align}\label{omegaleq8pirzmultiple}
\omega(z, \partial U_j \cap B_{r}(0), \Omega) \leq 
\begin{cases}
\frac{8}{\pi} \big(\frac{r}{|z|}\big)^{\frac{1}{\alpha_j}} 
\big(1 +  C_j |z|^{\gamma_j }\big)^{\frac{1}{\alpha_j \gamma_j}}, & j = 1, \dots, m_1,
	\\
\frac{8}{\pi}  \exp\big( -\pi \frac{r^{-d_j}-|z|^{-d_j}}{a_j d_j} \big) \big( \frac{1+C_{j}r^{d_{j}}}{1+C_{j}|z|^{d_{j}}} \frac{|z|^{d_{j}}}{r^{d_{j}}} \big)^{\frac{\pi C_{j}}{a_{j}d_{j}}},  & j = m_1 +1, \dots, m,
\end{cases}
\end{align}
while if $r > 0$ and $z \in \Omega$ are such that $0 < r < \rho_0 \leq |z|$, then
\begin{align}\label{omegaleq8pirzmultiple2}
\omega(z, \partial U_j \cap B_{r}(0), \Omega) \leq 
\begin{cases}
\frac{8}{\pi} \big(\frac{r}{\rho_0}\big)^{\frac{1}{\alpha_j}} 
\big(1 +  C_j \rho_0^{\gamma_j }\big)^{\frac{1}{\alpha_j  \gamma_j}}, & j = 1, \dots, m_1,
	\\
\frac{8}{\pi}  \exp \big( -\pi \frac{r^{-d_j}-\rho_{0}^{-d_j}}{a_j d_j} \big) \big( \frac{1+C_{j}r^{d_{j}}}{1+C_{j}\rho_{0}^{d_{j}}} \frac{\rho_{0}^{d_{j}}}{r^{d_{j}}} \big)^{\frac{\pi C_{j}}{a_{j}d_{j}}},  & j = m_1 +1, \dots, m.
\end{cases}
\end{align}
\end{lemma}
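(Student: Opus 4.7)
\textbf{Proof proposal for Lemma \ref{omegaupperboundlemma}.} The plan is to apply the extremal distance estimate in Lemma \ref{extremaldistancelemma} with the right choice of $R_0$, then insert the bound on $\Theta_j(r)$ from Lemma \ref{Thetajlemma} and carry out the resulting elementary integral. Specifically, set $z_1 = z$ and $r_0 = r$; in case 1 take $R_0 = |z|$ (so that the hypothesis $|z_1| \geq R_0$ holds with equality), while in case 2 take $R_0 = \rho_0$. Then
\begin{align*}
\omega(z, \partial U_j \cap B_r(0), \Omega) \leq \frac{8}{\pi} \exp \bigg( -\pi \int_{r}^{R_0} \frac{d\rho}{\rho \Theta_j(\rho)} \bigg),
\end{align*}
so to obtain the stated bounds it is enough to produce a lower bound for the integral on the right. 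For this it suffices to replace $\Theta_j$ by its upper bound from Lemma \ref{Thetajlemma} and evaluate the integral explicitly in the two geometric regimes.

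For a H\"older $C^{1}$-corner, i.e.\ $j \in \{1,\dots, m_1\}$, substitute $u = \rho^{\gamma_j}$ into $\int_{r}^{R_0} \frac{d\rho}{\rho \pi \alpha_j (1 + C_j \rho^{\gamma_j})}$ and use partial fractions $\frac{1}{u(1+C_j u)} = \frac{1}{u} - \frac{C_j}{1+C_j u}$. This yields
\begin{align*}
\int_{r}^{R_0} \frac{d\rho}{\rho \Theta_j(\rho)} \geq \frac{1}{\pi \alpha_j \gamma_j} \log \frac{R_0^{\gamma_j}(1 + C_j r^{\gamma_j})}{r^{\gamma_j}(1 + C_j R_0^{\gamma_j})}.
\end{align*}
Exponentiating, dropping the factor $(1+C_j r^{\gamma_j})^{-1/(\alpha_j \gamma_j)} \leq 1$, and taking $R_0 \in \{|z|, \rho_0\}$ gives precisely the first line of \eqref{omegaleq8pirzmultiple} and \eqref{omegaleq8pirzmultiple2}.

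For a cusp, i.e.\ $j \in \{m_1+1,\dots,m\}$, the corresponding integral is $\int_{r}^{R_0} \frac{d\rho}{a_j \rho^{d_j+1}(1 + C_j \rho^{d_j})}$. Here the substitution $u = \rho^{-d_j}$ (so $\frac{d\rho}{\rho^{d_j+1}} = -\frac{du}{d_j}$) converts the integrand to $\frac{-1}{a_j d_j} \cdot \frac{u}{u + C_j}du = \frac{-1}{a_j d_j}\bigl(1 - \frac{C_j}{u+C_j}\bigr)du$, and integration gives
\begin{align*}
\int_{r}^{R_0} \frac{d\rho}{\rho \Theta_j(\rho)} \geq \frac{1}{a_j d_j}\bigg[(r^{-d_j} - R_0^{-d_j}) - C_j \log \frac{r^{-d_j} + C_j}{R_0^{-d_j} + C_j}\bigg].
\end{align*}
Multiplying by $-\pi$, exponentiating, and rewriting $\frac{r^{-d_j} + C_j}{R_0^{-d_j} + C_j} = \frac{R_0^{d_j}}{r^{d_j}} \cdot \frac{1 + C_j r^{d_j}}{1 + C_j R_0^{d_j}}$ yields the second line of \eqref{omegaleq8pirzmultiple} (with $R_0 = |z|$) and of \eqref{omegaleq8pirzmultiple2} (with $R_0 = \rho_0$).

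There is no real obstacle beyond bookkeeping: the key observations are that Lemma \ref{extremaldistancelemma} only requires $|z_1| \geq R_0$ (which dictates the two cases), that Lemma \ref{Thetajlemma} provides a clean power-type upper bound on $\Theta_j$, and that both resulting integrals admit elementary closed forms via the substitutions $u = \rho^{\gamma_j}$ (corner) and $u = \rho^{-d_j}$ (cusp). The mild nuisance in the cusp case is the secondary factor produced by the correction $C_j r^{d_j}$ in $\Theta_j$; keeping track of it is what produces the extra power $\bigl(\tfrac{1+C_j r^{d_j}}{1+C_j R_0^{d_j}}\tfrac{R_0^{d_j}}{r^{d_j}}\bigr)^{\pi C_j/(a_j d_j)}$ in the final bound.
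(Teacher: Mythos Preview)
Your proposal is correct and follows essentially the same approach as the paper: apply Lemma \ref{extremaldistancelemma} with $R_0 = \min(|z|,\rho_0)$, insert the upper bound on $\Theta_j$ from Lemma \ref{Thetajlemma}, and evaluate the resulting elementary integral. The paper records the cusp integral directly and defers the corner case to \cite[Lemma 4.3]{CL Corner}, whereas you spell out both substitutions; the content is the same.
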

\begin{proof}
For any $r_0 \in (0, \rho_0)$, Lemma \ref{extremaldistancelemma} yields
\begin{align}\label{omegazEOmega}
\omega(z, \partial U_j \cap B_{r_0}(0), \Omega) \leq \frac{8}{\pi} e^{-\pi \int_{r_0}^{R_0} \frac{dr}{r \Theta_j(r)}} \quad \text{for all $z \in \Omega$ with $|z| > r_0$},
\end{align}
where $R_0 := \min(\rho_0, |z|)$.
Suppose first that $j \in \{m_1 +1, \dots, m\}$.
Using (\ref{Thetajestimate}), we see that
\begin{align*}
\int_{r_0}^{R_{0}} \frac{dr}{r \Theta_j(r)}
& \geq \int_{r_0}^{R_0} \frac{dr}{a_j r^{d_j+1}(1+C_{j}r^{d_{j}})}
= \frac{r_0^{-d_j} - R_{0}^{-d_{j}}}{a_j d_j}+ \frac{C_{j}}{a_{j}d_{j}} \log \bigg( \frac{1+C_{j}R_{0}^{d_{j}}}{1+C_{j}r_{0}^{d_{j}}} \frac{r_{0}^{d_{j}}}{R_{0}^{d_{j}}} \bigg).
\end{align*}
Employing this inequality in (\ref{omegazEOmega}) and replacing $r_0$ by $r$ and $R_{0}$ by $\min(\rho_0, |z|)$, we find the claim. A similar argument using (\ref{Thetajestimate}) gives the desired conclusion also in the case when $j \in \{1, \dots, m_1\}$ (see also \cite[Lemma 4.3]{CL Corner}). 
\end{proof}

\begin{remark}\label{epsilonremark}
Lemma \ref{omegaupperboundlemma} gives slightly stronger estimates at a cusp than what we need in this paper.
Repeating the proof of Lemma \ref{omegaupperboundlemma} with $C_j = 0$ and $a_j$ replaced by $(1+\epsilon)a_j$, we see that if $j = m_1 +1, \dots, m$ and $\Theta_j(r) \leq (1+\epsilon) a_j r^{d_j}$ for all sufficiently small $r > 0$, then 
\begin{align}\label{omegaestimate}
\omega(z, \partial U_j \cap B_{r}(0), \Omega) \leq 
\begin{cases}
\frac{8}{\pi} \exp\big( -\pi \frac{r^{-d_j}-|z|^{-d_j}}{(1+\epsilon)a_j d_j} \big)   & \text{if $0 < r < |z| \leq \rho_0$},
	\\
\frac{8}{\pi} \exp \big( -\pi \frac{r^{-d_j}-\rho_{0}^{-d_j}}{(1+\epsilon)a_j d_j} \big) & \text{if $0 < r < \rho_0 \leq |z|$}.
\end{cases}
\end{align}
It is this weaker form of the estimate that we use in the proof of the upper bound in Section \ref{proofsec}.
\end{remark}

\section{Proof of Proposition \ref{Fprop}}\label{Fapp}
The expansions in (\ref{Fnear0and1}) follow from \cite[5.15.2--5.15.7]{NIST} and the asymptotic formula \cite[5.15.8]{NIST}. It follows that $F(x)$ extends to a continuous function of $x \in [0,1]$ with $F(0) = 1$ and $F(1) = 0$. 
It only remains to show that $F'(x) < 0$ for $x \in (0,1)$. 
To show this, note that
$$F'\bigg(\frac{y-1}{y}\bigg) = -\frac{4 (y-1)}{y^3}G(y) \qquad \text{for $y \in (1, +\infty)$},$$
where
$$G(y) := (y-1) y^3 \psi''(y)+ (y-1)^2 + 2 y^2 \psi'(y).$$
Thus it is enough to show that $G(y) > 0$ for $y \in (1, +\infty)$. Suppose we can show that
\begin{align}\label{psi1estimate}
& \psi'(y) \geq \frac{1}{y} && \text{for $y \in (1, +\infty)$},
	\\ \label{psi2estimate}
& \psi''(y) \geq -\frac{1}{(y-1/2)^2}&& \text{for $y \in (1, +\infty)$}.
\end{align}
Then 
$$G(y) \geq -(y-1) y^3 \frac{1}{(y-1/2)^2}+ (y-1)^2 + 2 y = \frac{5 y^2-4 y+1}{(1-2 y)^2} > 0 \qquad \text{for $y \in (1, +\infty)$},$$
which gives the desired conclusion. So it only remains to show (\ref{psi1estimate}) and (\ref{psi2estimate}).
As $y \to +\infty$, we have
$$\psi'(y) - \frac{1}{y} = \frac{1}{2y^2} + \bigO(y^{-3}), \qquad
\psi''(y) + \frac{1}{(y-\frac{1}{2})^2}= \frac{1}{4y^4} + \bigO(y^{-5}),$$
implying that (\ref{psi1estimate}) and (\ref{psi2estimate}) hold for all sufficiently large positive $y$. 
Furthermore, the relations
$$\psi'(y+1) = -\frac{1}{y^2} + \psi'(y), \qquad
\psi''(y+1) = \frac{2}{y^3} + \psi''(y),$$
imply that, for all $y > 0$,
\begin{align}\label{psi1relation}
\bigg(\psi'(y+1) - \frac{1}{y+1}\bigg) - \bigg(\psi'(y) - \frac{1}{y}\bigg) 
= -\frac{1}{y^2} - \frac{1}{y+1} + \frac{1}{y}  = -\frac{1}{y^2 + y^3} < 0
\end{align}
and, for all $y > \frac{1}{2\sqrt{2}}$,
\begin{align}\nonumber
\bigg(\psi''(y+1) + \frac{1}{(y+1-\frac{1}{2})^2}\bigg) - \bigg(\psi''(y) + \frac{1}{(y-\frac{1}{2})^2}\bigg) 
& = \frac{2}{y^3} + \frac{1}{(y+1-\frac{1}{2})^2} - \frac{1}{(y-\frac{1}{2})^2} 
	\\\label{psi2relation}
& = \frac{2-16 y^2}{y^3 \left(1-4 y^2\right)^2} < 0.
\end{align}
The inequality (\ref{psi1relation}) shows that if (\ref{psi1estimate}) holds at $y +1 > 1$, then it holds also at $y$. It follows that (\ref{psi1estimate}) holds for all $y > 0$.
Similarly, (\ref{psi2relation}) shows that if (\ref{psi2estimate}) holds at $y + 1 > 1 + \frac{1}{2\sqrt{2}}$, then it holds also at $y$, implying that (\ref{psi2estimate}) holds for all $y > \frac{1}{2\sqrt{2}}$.
This completes the proof of (\ref{psi1estimate}) and (\ref{psi2estimate}), and hence also of Proposition \ref{Fprop}.

\subsection*{Acknowledgements}
CC acknowledges support from the Swedish Research Council, Grant No. 2021-04626, and JL acknowledges support from the Swedish Research Council, Grant No. 2021-03877.

\footnotesize

\end{document}